\begin{document}

\markboth{W. Jiang, Z. Zhang, Z. Zhou}{A regularized model for wetting/dewetting problems}

%
\catchline{}{}{}{}{}
%

\title{A regularized model for wetting/dewetting problems: \\asymptotic analysis and $\Gamma$-convergence  }

\author{Wei Jiang}

\address{School of Mathematics and Statistics, Hubei Key Laboratory of Computational Science, Wuhan University, Wuhan 430072, China. \\
	jiangwei1007@whu.edu.cn}

\author{Zhen Zhang \thanks{Corresponding authors.} }

\address{Department of Mathematics, Guangdong Provincial Key Laboratory of Computational Science and Material Design, International Center for Mathematics, National Center for Applied Mathematics (Shenzhen), Southern University of Science and Technology,
	Shenzhen 518055, China. \\
	zhangz@sustech.edu.cn}

\author{Zeyu Zhou $^*$}

\address{Department of Mathematics, Southern University of Science and Technology,\\
	Shenzhen 518055, China. \\
	zhouzy2021@mail.sustech.edu.cn}

\maketitle

\begin{history}
\received{(Day Month Year)}
\revised{(Day Month Year)}
\comby{(xxxxxxxxxx)}
\end{history}

\begin{abstract}
By introducing height dependency in the surface energy density, we propose a novel regularized variational model to simulate wetting/dewetting problems.
The regularized model leads to the appearance of a precursor layer which covers the bare substrate, with the precursor height depending on the regularization parameter $\varepsilon$. The new model enjoys lots of advantages in analysis and simulations. With the help of the precursor layer, the regularized model is naturally extended to a larger domain than that of the classical sharp-interface model, and thus can be solved in a fixed domain.
There is no need to explicitly track the contact line motion, and difficulties arising from free boundary problems can be avoided. In addition, topological change events can be automatically captured.
Under some mild and physically meaningful conditions, we show the positivity-preserving property of the minimizers of the new model.
By using asymptotic analysis and $\Gamma$-convergence, we investigate the convergence relations between the new regularized model and the classical sharp-interface model.
Finally, numerical results are provided to validate our theoretical analysis, as well as the accuracy and efficiency of the new regularized model.
\end{abstract}

\keywords{regularized model; wetting/dewetting; asymptotic analysis; $\Gamma$-convergence; contact line.}

\ccode{AMS Subject Classification: 74G15, 74H55, 74M15}

\section{Introduction}\label{intro}

Wetting/dewetting phenomena are ubiquitous in nature and technology.\cite{deGennes85,thompson2012} In general, when a solid/liquid drop is placed on a substrate which it wets, it may spread out to form a film; conversely, when a solid/liquid film previously covers a non-wetted substrate, it may dewet or agglomerate to form isolated drops. The kinetic process of wetting or dewetting is determined by the presence of the three-phase contact line which separates ``wet'' regions from those that are either dry or covered by a microscopic film. Nowadays, wetting and dewetting have been considered as both fundamental modes of motion of solid/liquid films on a substrate, and they are extremely important for applications in materials, chemistry, biology, engineering and industry.

According to the film's material state, {\it{wetting/dewetting phenomena can be classified into two categories: liquid-state wetting/dewetting and solid-state wetting/dewetting.}} The research about liquid-state wetting/dewetting can be dated back to the pioneering work of Young and Laplace two hundreds years ago.\cite{Young1805} They discovered that one of the primary parameters that characterize wetting/dewetting of liquids is the equilibrium or static contact angle, which is determined by the well-known Young's equation
\begin{equation*}
\cos\theta_e=\frac{\gamma_{_{VS}}-\gamma_{_{LS}}}{\gamma_{_{LV}}},
\end{equation*} 	
where $\gamma_{_{VS}}$, $\gamma_{_{LS}}$ and $\gamma_{_{LV}}$ are the surface tension (i.e., surface energy density) of the vapor-substrate interface, the liquid-substrate interface and the liquid-vapor interface, respectively. In the past few decades, liquid-state wetting/dewetting has been gradually becoming one of the core problems in fluid mechanics and applied mathematics.\cite{QianWangSheng2006,Bonn2009wetting,Xu2011analysis,Andreotti2020statics} For more details about liquid-state wetting/dewetting, readers may refer to the review articles (e.g., Refs. \refcite{deGennes85,Bonn2009wetting}). Very recently, wetting/dewetting on deformable substrates have attracted much attention due many interesting effects such as the formation of cusps and their potential applications in self-transport processes.\cite{marchand2012contact,style2013universal,zhang20static,Bradley19,zhang22variational}

On the other hand, solid-state wetting/dewetting is a fundamental phenomenon in materials science, and has been widely observed in a large number of experimental systems (e.g., see the recent review papers~\refcite{thompson2012,Leroy16}), such as Ni films on MgO substrates and SOI (i.e., Si films on amorphous SiO$_2$ substrates). Usually, solid thin films deposited on a substrate are usually metastable or unstable in the as-deposited state and can exhibit complex morphological evolutions when the temperature is well below the melting point of the film material. Because the film is still in solid-state, this phenomenon is called the solid-state wetting/dewetting, and it has attracted increasing attentions in recent years not only because of its arising scientific problems related to modelling and simulations~\cite{jiang2012,Jiang16,bao2017,Naffouti17,Tripathi2018,zhao2020parametric,Boccardo2022}, but also due to its wide applications in industrial technologies, used in solar cells, optical and magnetic devices, sensor devices, catalyzing the growth of carbon nanotubes, semiconductor nanowires and so on (cf. Refs. \refcite{armelao2006,danielson2006,randolph2007} and \refcite{volker2009}).

\begin{figure}[tbhp]
	\centering
	\includegraphics[width=12.0cm]{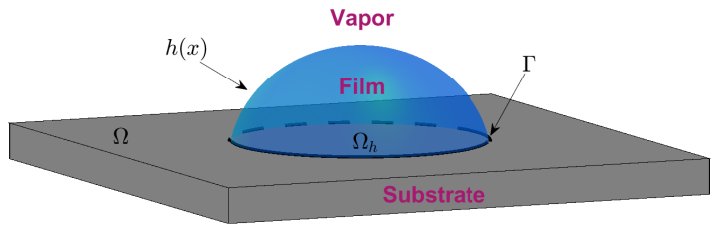}
	\caption{\label{fig:dro}{An island film (which is in liquid-state or solid-state) sitting on a flat, rigid substrate with three interfaces,  i.e.,  film/vapor (FV),  film/substrate  (FS), and vapor/substrate (VS) interfaces, where $\Gamma$} is the contact line.}
\end{figure}

Both liquid-state and solid-state wetting/dewetting problems can be modelled in a simplified situation.
As shown in Fig.~\ref{fig:dro}, we consider a film (which is in liquid-state or solid-state) standing on a flat, rigid substrate, and assume that the film-vapor interface is parameterized by a height function $h=h(x)$ for $x\in\Omega\subset R^n$, $n=1, 2$, where $\Omega$ is an open and bounded domain with its boundary $\partial\Omega$ being Lipschitz continuous. Physically in wetting/dewetting problems, $h$ is usually supported on an $h$-dependent subset $\Omega_h\subset\Omega$, with its boundary $\Gamma:=\partial\Omega_h$ being the contact line, i.e., a triple line where the film, substrate and vapor phases meet.
Following the thermodynamic principles laid out by Gibbs,\cite{Gibbs1878} the equilibrium shapes of wetting/dewetting problems can be determined by minimizing the total interfacial energy functional of the system, i.e., finding the equilibrium profile of $h\in W^{1,1}(\Omega_h)$ in order to minimize the following functional
\begin{equation}
 E=\gamma_{_{VS}}|\Omega\setminus\Omega_h|+\gamma_{_{FS}}|\Omega_h|+\int_{\Omega_h}\gamma_{_{FV}}\sqrt{1+|\nabla h|^2}\mathrm{d}x,
 \label{eqn:func}
\end{equation}
under the volume constraint $\int_{\Omega_h}h(x)\mathrm{d}x=V$, where $V$ is a prescribed positive constant, $\gamma_{_{VS}}$, $\gamma_{_{FS}}$ and $\gamma_{_{FV}}$ are respectively the surface energy densities of the vapor/substrate, film/substrate and film/vapor interfaces. It is noted here that, although the above model is relatively simple and we only focus on the surface energy effect (e.g., for liquids, we ignore the hydrodynamical effect; for solids, we ignore
the elastic effect, alloy, grain boundary and others), it includes the most fundamental and important features of wetting/dewetting problems.
{\it {It can be regarded as a basic model for studying wetting/dewetting problems, and many more complicated models can be built on it.}}

For liquid-state problems, $\gamma_{_{VS}}$, $\gamma_{_{FS}}$ and $\gamma_{_{FV}}$ are often three material constants, which determines the equilibrium contact angle by the Young's equation, i.e., $\cos\theta_e=({\gamma_{_{VS}}-\gamma_{_{FS}}})/{\gamma_{_{FV}}}$. If $\theta_e \in (0, \pi)$, then the equilibrium shape is a part of sphere which is
truncated by the flat substrate at a contact angle of $\theta_e$. For solid-state problems, $\gamma_{_{VS}}$, $\gamma_{_{FS}}$ are often two constants, but due to the lattice orientational difference of solids, $\gamma_{_{FV}}$ can be a function of the surface normal of the film-vapor interface. This is called the anisotropic surface energy.\cite{bao2017,wang2015sharp,bao2017parametric} To predict equilibrium shapes without considering the substrate energy, Wulff proposed a geometrical approach called ``Wulff construction''.\cite{wulff1901} This method works well not only for the isotropic case but also for anisotropic cases, and has been rigorously proved by Taylor and Fonseca.\cite{taylor1974,fonseca1991A} Furthermore, based on the idea of the Wulff construction, the ``Winterbottom construction'' was proposed to predict equilibrium shapes of solid-state wetting/dewetting with the substrate energy being considered.\cite{winterbottom1967,bao2017} Interestingly, motivated by many experiments which indicate that there exist multiple stable equilibrium shapes in strongly anisotropic surface energy cases, Jiang and his collaborators proposed a generalized Winterbottom construction to identify all possible equilibrium states of solid-state wetting/dewetting in Refs.~\refcite{Jiang16,bao2017}. Recently, Piovano and Vel{\v{c}}i{\'c} have showed that
the above continuum model \eqref{eqn:func} for solid-state wetting/dewetting is $\Gamma$-converged by an atomistic model taking into consideration atomic interactions of solid particles both among themselves and with the fixed substrate atoms.\cite{Piovano2022}

Furthermore, the kinetic evolution problems of wetting/dewetting can be mathematically regarded as gradient flows of the above energy functional \eqref{eqn:func} in some suitable metric spaces. For liquid-state wetting/dewetting, one can use the $L^2$-gradient flow with volume constraints or the Hele-Shaw flow,\cite{Alikakos1994,Chen2011mass} which are related with the mass-conserving Allen-Cahn equation\cite{Turco2009wetting,Chen2011mass} or the conventional Cahn-Hilliard equation\cite{Chen2014analysis} respectively in the phase-field framework; for solid-state wetting/dewetting, one can usually use the $H^{-1}$-gradient flow, which corresponds to the surface diffusion flow.~\cite{Cahn94,jiang2012,bao2017,bao2017parametric,jiang2020} Nevertheless, because the above energy functional \eqref{eqn:func} depends on an unknown film/vapor interface, which intersects with the substrate along a contact line $\Gamma$, the governing equation for describing the motion of the interface will include contact line migration. Therefore, the kinetic problems belong to a type of open curve/surface evolution problems under geometric flows together with contact line migration, and this type of free boundary problems has posed a considerable challenge to researchers in materials science, applied mathematics and scientific computing.~\cite{wong2000,du2010,bao2017parametric,zhao2020parametric,jiang2020}

In addition, the pinch-off events often take place during the kinetic evolution of wetting/dewetting, i.e., a continuous film is split into two or more parts.
In general, sharp-interface approaches can not deal with the topological change events, especially for three-dimensional cases.\cite{du2010,jiang2020}
In order to tackle the difficulty, phase-field models are proposed in the literature for simulating wetting/dewetting problems.~\cite{jiang2012,dziwnik2017}
The phase-field models introduce an artificial scalar function (i.e., the phase-field function) to describe the evolution of the interface. The interface/surface is represented by the zero-level set of the phase function. Recently, a phase field model by using the Cahn-Hilliard equation with degenerate mobility and nonlinear boundary conditions along the substrate has been proposed for simulating the solid-state dewetting with isotropic surface energy~\cite{jiang2012,Huang2019}, and this method was recently extended to the weakly anisotropic surface energy case.\cite{dziwnik2017} The advantages of these phase-field models are their ability to automatically dealing with topological changes, and that they are easily extended to three dimensional cases. However, the phase-field model suffers from its lower computational efficiency than that of sharp-interface models, since it is one dimension higher in space. Furthermore, in order to approximate the correct geometric flow (e.g., surface diffusion), the commonly used Cahn-Hilliard equation should include the higher-order degenerate mobility~\cite{cahn1996,dai2014,lee2016,Bretin2022approximation}, which will pose considerable difficulties in developing efficient and accurate numerical schemes for solving it.

Since the existing models (including sharp-interface and phase-field models) have more or less deficiencies and limitations, this paper aims to propose a new variational regularized model for solving wetting/dewetting problems. The new regularized model enjoys all advantages of the previous models: (1) it only needs to be solved in a fixed domain, no matter what kind of problems (i.e., equilibrium or kinetic problems) we focus on; (2) it does not need to explicitly handle the contact line motion, so it can automatically capture the topological change events; (3) it is one dimension lower than the phase-field model, so its computational cost is relatively smaller. The key idea is that we relax the above energy functional \eqref{eqn:func} by introducing a relaxed surface energy density $\gamma^\varepsilon(h)$ (see section 2), where $h$ is a height function of the film, and $\varepsilon$ is a small regularization parameter. By our theoretical analysis, we show that the new relaxed surface energy density will lead to the effect that the bare substrate is always covered by a precursor layer whose thickness depends on the small parameter $\varepsilon$. This brings lots of advantages in analysis and simulations. By extending the energy functional \eqref{eqn:func} into an integral over a fixed domain, we then transform the free boundary problem into a fixed domain problem. As a first step, we will rigorously show that the solutions of the regularized model is positivity-preserving. We also prove that the regularized functional $\Gamma$-converges to the original energy functional \eqref{eqn:func}. Furthermore, by using asymptotic analysis, we show that the new model can perfectly recover the well-known Young's equation and the solution of this model asymptotically converges to that of the original model (including the film profile far from the substrate, the height of the precursor, and the contact line region) with satisfactory rate.

The rest of the paper is organized as follows. In section 2, we first propose a new variational regularized model and establish the positivity-preserving property of its minimizers in the sense of classical solutions. Then, we show some properties of the asymptotic convergence in section 3 by using the formal matched asymptotic expansion for the new regularized model. In section 4, we present a mathematical proof for the $\Gamma$-convergence of the regularized model to the classical sharp-interface model. Finally, some numerical results are shown in section 5 to validate the theoretical analysis of the new model.

\section{A new regularized model and the positivity-preserving property of its minimizers}\label{sec:odes}
%
As we discuss before, the minimization problems (including equilibrium and kinetic problems) with respect to the functional \eqref{eqn:func} belong to a type of free boundary problems, which will bring much difficulties in both analysis and numerical simulations. To tackle these difficulties, we extend the support region of the height function $h(x)$ to the whole domain $\Omega$ by introducing a precursor thin film layer outside the part of the solid/liquid film (shown in Fig. \ref{fig:precursor-droplet}). The precursor is also represented by $h(x)$, and its height is very small (i.e., $0 < h(x)\ll1 $). An illustration about the difference between the original sharp-interface model and the new regularized model is depicted in Fig. \ref{fig:precursor-droplet}.
\begin{figure}[tbhp]
	\centering
	\includegraphics[width=12.5cm]{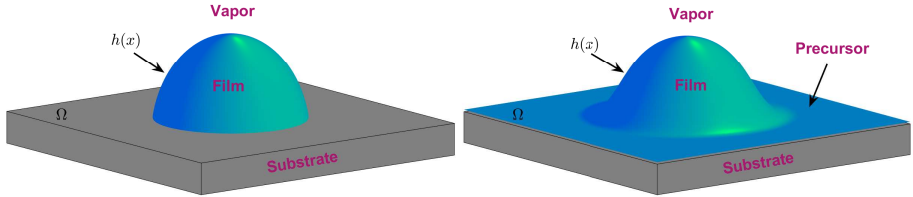}
	\caption{\label{fig:precursor-droplet} A schematic illustration about the difference between the original sharp-interface model (as shown left) without a precursor and the new regularized model (as shown right) with a precursor, where the thickness of the precursor is at $\mathcal{O}(\varepsilon^2)$ (shown in Section 3).}
\end{figure}

Based on the key idea, we introduce a regularized surface energy density $\gamma^\varepsilon(h)$ as follows
\begin{equation}
	\gamma^\varepsilon(h)=\gamma_{_{FV}}+(\gamma_{_{FV}}-\gamma_{_{VS}}+\gamma_{_{FS}})g(\frac{h}{\varepsilon})=\gamma_{_{FV}}-Sg(\frac{h}{\varepsilon}),
\label{eqn:reg}
\end{equation}
where $\varepsilon$ is a regularized parameter, i.e., $0<\varepsilon\ll 1$, and $S=\gamma_{_{VS}}-\gamma_{_{FS}}-\gamma_{_{FV}}$ is the spreading parameter.
{\it {For simplicity, we always consider the isotropic system, i.e., $\gamma_{_{VS}}$, $\gamma_{_{FS}}$ and $\gamma_{_{FV}}$ are three positive constants
in this paper.}}

In \eqref{eqn:reg}, $g(z)$ is an interpolation function between $-1$ and $0$ when $z>0$, and
it is easy to see that the regularized density function $\gamma^\varepsilon(h)$ interpolates between $\gamma_{_{FV}}$ and $\gamma_{_{VS}}-\gamma_{_{FS}}$.
More precisely, when the film thickness approaches zero (i.e., close to a bare substrate), $g(z)$ goes to $-1$ and it is expected that $\gamma^\varepsilon(h)$ converges to the
energy density $\gamma_{_{VS}}-\gamma_{_{FS}}$; when the film thickness is very large, $g(z)$ goes to $0$ and it is expected that $\gamma^\varepsilon(h)$ converges to the film/vapor surface energy density  $\gamma_{_{FV}}$. It is noted that a similar version of the regularized density $\gamma^\varepsilon(h)$ was proposed for studying the epitaxial solid film growth and triple line kinetics,\cite{Chiu1994,Tripathi2018} and similar thickness-dependent surface energy densities can also be found in fluid mechanics for modeling the wetting/dewetting of thin liquid films.\cite{Peschka2019,Wu2004slope}

In addition, we also assume that$\lim\limits_{z\to-\infty}g(z)=+\infty$ in order to prevent $h$ from taking negative values. Define Young's angle $\theta_e$ by $\cos\theta_e=(\gamma_{_{VS}}-\gamma_{_{FS}})/\gamma_{_{FV}}$. Throughout this paper, we will only consider the partial wetting/dewetting regime with $\theta_e \in (0,\frac{\pi}{2})$ due to the graph representation of film thickness. {\it {Thus, we always assume that $S<0$ and $\gamma_{_{VS}}-\gamma_{_{FS}}>0$ in this paper}}.

In summary, we assume that the interpolation function $g(z)$ satisfies the following conditions throughout this paper:
\smallskip
\begin{itemize}
\item[(1)] $g(z) \in C^1(\mathbb{R})$ with $g(0)=-1$, $\lim\limits_{z\to+\infty}g(z)=0$;

\item[(2)] $g(z)$ is strictly increasing if $z>0$, and strictly decreasing if $z<0$.
\end{itemize}
We note that the above two conditions will be used in the proof of the positivity-preserving property.
Furthermore, in order to perform asymptotic analysis and $\Gamma$-convergence about the new model,
we still need two additional conditions:
\begin{itemize}
\item[(3)] $g(z) \in C^\infty(\mathbb{R})$ and $ \lim\limits_{z\to+\infty}zg'(z)=0 $;

\item[($3^\prime$)] $\lim\limits_{z\to-\infty}g(z)=+\infty$, and $ g(z)\leqslant g(-z) $ if $ z>0 $.
\end{itemize}
Here, ($3$) will be used in the asymptotic analysis, and ($3^\prime$) used in the proof of $\Gamma$-convergence.
An example~\cite{Tripathi2018} of the function $g(z)$  which satisfies the above conditions is taken as $g(z)=e^{-z}-2e^{-\frac{z}2}$,
as shown in Fig. \ref{fig:profile_g}.
\begin{figure}[tbhp]
	\centering
	\includegraphics[width=8.0cm]{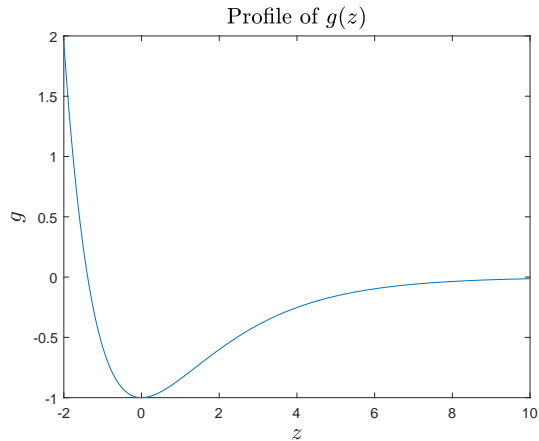}
	\caption{\label{fig:profile_g}An example of the interpolation function $g(z)$ defined in \eqref{eqn:reg}.}
\end{figure}

Using the regularized density $\gamma^\varepsilon(h)$ in \eqref{eqn:reg}, we can define the regularized interfacial energy functional of the system
in the fixed domain $\Omega$ as
\begin{equation}
	F^\varepsilon(h)=\int_{\Omega}\gamma^\varepsilon(h)\sqrt{1+|\nabla h|^2}\mathrm{d}x.
\label{eqn:regfun}
\end{equation}
 The equilibrium state is obtained by minimizing $F^\varepsilon(h)$ subject to the volume constraint
\begin{equation}\label{eq:vol}
G(h)=\int_{\Omega}h(x)\mathrm{d}x=V.
\end{equation}
From the Euler-Lagrange equation, we have
\begin{equation}\label{eq:equilib}
	\frac{\delta}{\delta h}(F^\varepsilon(h)-\lambda G(h))=\frac{\delta F^\varepsilon}{\delta h}-\lambda=0,
\end{equation}
where $\lambda$ is the Lagrange multiplier, and
\begin{align*}
	\frac{\delta F^\varepsilon}{\delta h}=&\frac{\mathrm{d}\gamma^\varepsilon}{\mathrm{d}h}\sqrt{1+|\nabla h|^2}-\nabla\cdot\Big(\frac{\gamma^\varepsilon(h)\nabla h}{\sqrt{1+|\nabla h|^2}}\Big)\\
	=&\frac{\mathrm{d}\gamma^\varepsilon}{\mathrm{d}h}\frac1{\sqrt{1+|\nabla h|^2}}-\gamma^\varepsilon(h)\nabla\cdot\Big(\frac{\nabla h}{\sqrt{1+|\nabla h|^2}}\Big).
\end{align*}
The equilibrium profile of $h$ can be obtained by solving \eqref{eq:equilib} together with the Neumann boundary condition:
\begin{equation}\label{eq:Neumann-bd}
	\frac{\partial h}{\partial\boldsymbol{\nu}}=0,\qquad\mbox{on}\quad \partial \Omega,
\end{equation}
where $ \boldsymbol{\nu} $ is outward normal vector and the constant $\lambda$ can be determined via the volume constraint.

First, we show that the equation \eqref{eq:equilib} with the boundary condition \eqref{eq:Neumann-bd} and the constraint \eqref{eq:vol} always admits
a positive solution in the classical sense, which means that the precursor layer always appears. To prove this, we need the interior sphere condition given below which is necessary when we use the Hopf's lemma.

\begin{definition}
	The domain $\Omega$ is said to satisfy interior sphere condition at $ x_0 $ if there exists an open ball $ B \subset \Omega$ with $ x_0\in\partial B $. For example, if $\partial\Omega$ is $ C^2 $-smooth, then $\Omega$ satisfies interior sphere condition at every point on $\partial\Omega$.
\end{definition}

Now, we can present the positivity-preserving property of the solution.

\begin{theorem}\label{positivity}
	Assume that $\Omega\in\mathbb{R}^n$ is open, $ n=1 \text{ or } 2$, and it satisfies interior sphere condition at any point on $ \partial\Omega $. Also assume that the interpolation function $g(z)$ satisfies the conditions (1) and (2), and the Young's angle $\theta_e\in(0,\frac{\pi}{2})$. Let the admissible set be $ \mathcal{A}:=\{u\in C^2(\Omega)\cap C^0(\bar{\Omega}):\int_{\Omega}u\mathrm{d}x=V>0\} $. For any fixed $\varepsilon>0$, if $ h\in \mathcal{A} $ satisfies
	\[
	F^\varepsilon(h)=\min_{u\in\mathcal{A}}F^\varepsilon(u),
	\]
	then $ h>0 $ in $ \bar{\Omega}$ and the corresponding Lagrange multiplier $ \lambda>0 $.
\end{theorem}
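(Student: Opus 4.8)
The plan is to argue by contradiction: suppose the minimizer $h$ attains a nonpositive value somewhere in $\bar\Omega$, and use the structure of the Euler--Lagrange equation together with the monotonicity of $g$ to derive a contradiction. First I would establish that $h$ satisfies \eqref{eq:equilib}, \eqref{eq:Neumann-bd} for some Lagrange multiplier $\lambda\in\mathbb{R}$; since $h\in C^2(\Omega)\cap C^0(\bar\Omega)$ is a classical minimizer and the volume constraint has nonvanishing derivative ($G'(h)=1\neq 0$), the standard Lagrange multiplier theorem applies. Next, let $m=\min_{\bar\Omega}h$ and suppose for contradiction that $m\leqslant 0$; let $x_0$ be a point where the minimum is attained. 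I would split into the interior case $x_0\in\Omega$ and the boundary case $x_0\in\partial\Omega$.

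For the interior case, at $x_0$ we have $\nabla h(x_0)=0$ and the Hessian is positive semidefinite, so $\nabla\cdot\bigl(\nabla h/\sqrt{1+|\nabla h|^2}\bigr)\geqslant 0$ at $x_0$ (it equals $\Delta h(x_0)\geqslant 0$ there, since the first-order terms vanish). Plugging into \eqref{eq:equilib}: at $x_0$,
\[
\lambda=\frac{\mathrm{d}\gamma^\varepsilon}{\mathrm{d}h}(x_0)-\gamma^\varepsilon(h(x_0))\,\Delta h(x_0).
\]
Here $\frac{\mathrm{d}\gamma^\varepsilon}{\mathrm{d}h}=-\frac{S}{\varepsilon}g'(h/\varepsilon)$, and since $m=h(x_0)\leqslant 0$ and $g$ is decreasing on $z<0$ with condition (2), while $S<0$, we get $\frac{\mathrm{d}\gamma^\varepsilon}{\mathrm{d}h}(x_0)\leqslant 0$ when $h(x_0)<0$; the delicate subcase is $h(x_0)=0$ exactly, where $g'(0)$ need not vanish, so I would instead use the strict monotonicity to compare energy values directly (see below). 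On the other hand $\gamma^\varepsilon(h(x_0))>0$: indeed $\gamma^\varepsilon(h)=\gamma_{_{FV}}-S g(h/\varepsilon)$, and for $h\leqslant 0$ we have $g(h/\varepsilon)\geqslant g(0)=-1$ combined with $-S>0$ gives $\gamma^\varepsilon\geqslant\gamma_{_{FV}}+S=\gamma_{_{FS}}+\gamma_{_{FV}}-\gamma_{_{VS}}$... actually more simply $\gamma^\varepsilon(h)\geqslant\gamma_{_{FV}}>0$ since $-Sg(h/\varepsilon)\geqslant -S\cdot(-1)\cdot(\text{sign})$; I would carry out this sign bookkeeping carefully. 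The upshot is $\lambda\leqslant 0$ in the strict subcase $m<0$, and then running the same inequality at any point shows $h$ cannot increase away from $m$ without violating the equation — a strong maximum principle argument forces $h\equiv m\leqslant 0$, contradicting $\int_\Omega h=V>0$.

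The cleanest route, which I would actually adopt, avoids case analysis on the Hessian: I would show directly that if $h$ touches or goes below zero, a truncation competitor strictly lowers the energy. Consider $h_\delta=\max(h,\delta)$ rescaled to satisfy the volume constraint; on the set $\{h<\delta\}$ the gradient term drops (since $h_\delta$ is flat there) and the density term changes from $\gamma^\varepsilon(h)$ to $\gamma^\varepsilon(\delta)$, which is \emph{smaller} by strict monotonicity of $g$ on $(0,\infty)$ together with the geometric fact $\gamma^\varepsilon$ is decreasing in $h$ for $h>0$ when $S<0$. The rescaling to restore volume adds only higher-order cost because the rescaling factor is $1+O(\text{measure of }\{h<\delta\})$; one checks the leading-order change is strictly negative. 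This contradicts minimality, so $h>0$ on $\bar\Omega$ — but this only gives $h\geqslant 0$ a priori, and to upgrade to strict positivity \emph{on all of} $\bar\Omega$, including $\partial\Omega$, is where Hopf's lemma enters: if $h(x_1)=0$ for some $x_1\in\partial\Omega$ while $h>0$ inside, the interior sphere condition plus the (locally uniformly elliptic, since $\gamma^\varepsilon>0$ and the mean-curvature operator is elliptic on bounded-gradient solutions) equation forces $\partial h/\partial\boldsymbol\nu(x_1)<0$, contradicting \eqref{eq:Neumann-bd}. The main obstacle I anticipate is precisely the interface between these two mechanisms: rewriting \eqref{eq:equilib} in nondivergence form with bounded measurable (indeed $C^0$) coefficients so that the classical strong maximum principle and Hopf lemma apply, and handling the borderline value $h=0$ where $\frac{\mathrm{d}\gamma^\varepsilon}{\mathrm{d}h}$ does not have a favorable sign — this is exactly why condition (2) (strict monotonicity, not just monotonicity) is invoked. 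Finally, once $h>0$ on $\bar\Omega$, positivity of $\lambda$ follows by integrating \eqref{eq:equilib} over $\Omega$, using the Neumann condition to kill the divergence term, leaving $\lambda|\Omega|=\int_\Omega \frac{\mathrm{d}\gamma^\varepsilon}{\mathrm{d}h}\frac{1}{\sqrt{1+|\nabla h|^2}}\,\mathrm{d}x=-\frac{S}{\varepsilon}\int_\Omega g'(h/\varepsilon)\frac{1}{\sqrt{1+|\nabla h|^2}}\,\mathrm{d}x>0$, since $h>0$ makes $g'(h/\varepsilon)>0$ by condition (2) and $-S>0$.
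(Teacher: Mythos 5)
Your overall toolkit (extremum-point analysis, Hopf's lemma via the interior sphere condition, contradiction with the positive volume) matches the paper's, and your closing identity $\lambda\,\mathcal{L}^n(\Omega)=-\frac{S}{\varepsilon}\int_\Omega g'(h/\varepsilon)\sqrt{1+|\nabla h|^2}\,\mathrm{d}x>0$ is a clean way to get $\lambda>0$ \emph{once} positivity of $h$ is known. But the route you say you would actually adopt --- the truncation competitor $h_\delta=\max(h,\delta)$ --- rests on a sign error. Since $S<0$ and $g$ is strictly increasing on $(0,\infty)$, we have $\frac{\mathrm{d}\gamma^\varepsilon}{\mathrm{d}h}=-\frac{S}{\varepsilon}g'(h/\varepsilon)>0$ for $h>0$: the density $\gamma^\varepsilon$ is \emph{increasing} in $h$ there, interpolating upward from $\gamma_{_{VS}}-\gamma_{_{FS}}$ at $h=0$ to $\gamma_{_{FV}}$, not decreasing as you claim. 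So on $\{0<h<\delta\}$ the truncation \emph{raises} the density from $\gamma^\varepsilon(h)$ to $\gamma^\varepsilon(\delta)$, and the asserted strict energy decrease is not established; you would have to show the gradient saving and the change on $\{h\le 0\}$ dominate this increase plus the volume-restoration cost, and under conditions (1)--(2) alone $g$ need not blow up as $z\to-\infty$, so there is no large penalty on the negative set to exploit. Moreover $\max(h,\delta)$ is not $C^2$, hence not an admissible competitor in $\mathcal{A}$ without a further smoothing step you do not supply.

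A second gap: the case $\lambda\le 0$ is never closed. Your sketch that "running the same inequality at any point forces $h\equiv m$" is not a valid strong-maximum-principle argument. The paper dispatches this case directly: if $\lambda\le 0$, then at a point where $h$ attains a positive maximum the right-hand side of \eqref{pde_equ} is strictly positive (there $g'(h(x_0)/\varepsilon)>0$ and $S<0$), giving $\Delta h(x_0)>0$ in the interior or $\partial h/\partial\boldsymbol{\nu}(x_0)>0$ on the boundary via Hopf's lemma; hence $\max_{\bar\Omega}h\le 0$, contradicting $\int_\Omega h\,\mathrm{d}x=V>0$. The clean structure is therefore to split on the sign of $\lambda$ first (your interior-minimum computation does yield the contradiction when $\lambda>0$), rather than on the sign of $\min h$. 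Finally, your worry about the "delicate subcase $h(x_0)=0$ where $g'(0)$ need not vanish" is moot: conditions (1) and (2) force $g'(0)=0$, since $g\in C^1$ has a strict minimum at $0$, and the paper's boundary (Hopf) step uses exactly this fact.
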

\begin{proof}
	We only consider the case $ n=2 $. For the case $n=1$, it is similar.
	
	Let us first define a quasi-linear strictly elliptic operator as
	\[
	\mathcal{L}h:=(1+h_y^2)h_{xx}-2h_xh_yh_{xy}+(1+h_x^2)h_{yy},
	\]
	which is widely used in the minimal surface equation.

	The Euler-Lagrange equation \eqref{eq:equilib} can be recast as
	\begin{equation}\label{pde_equ}
		\gamma^\varepsilon(h)\mathcal{L}h=-\bigg(\frac{1}{\varepsilon}Sg'(\frac{h}{\varepsilon})(1+|\nabla h|^2)+\lambda(1+|\nabla h|^2)^{\frac{3}{2}}\bigg),
	\end{equation}
	associated with the Neumann boundary condition $ \frac{\partial h}{\partial\boldsymbol{\nu}}=0$  on $\partial\Omega  $.
	
	\textbf{Case I}: $\lambda>0$. We will argue by contradiction that $h>0$ in $\bar{\Omega}$. Assume $ \min_{x\in\bar{\Omega}}h(x)\leqslant0 $, and let $ h(x) $ attain its minimum at $ x_0 \in \bar{\Omega}$. We separate our arguments into the following two cases:
	
	(i) If $ x_0\in\Omega $, we have $ h(x_0)\leqslant0 $, $ h_x(x_0)=0 $, $ h_y(x_0)=0 $ and its Hessian matrix $ \nabla^2 h(x_0) $ is non-negative definite.  Then at $ x_0 $, \eqref{pde_equ} becomes:
	\[
	\gamma^\varepsilon(h(x_0))\Delta h(x_0)=-\bigg(\frac{1}{\varepsilon}Sg'(\frac{h(x_0)}{\varepsilon})+\lambda\bigg).
	\]
	By condition (2), $g(z)$  is decreasing when $z\leqslant0 $, which implies $ g'(\frac{h(x_0)}{\varepsilon}) \leqslant0$. Since $ \gamma^\varepsilon(h)\geqslant\gamma_{_{FV}}+S=\gamma_{_{VS}}-\gamma_{_{FS}}>0 $ and $ S<0 $, we know $ \text{Tr}(\nabla^2h(x_0))=\Delta h(x_0)<0 $ which leads to a contradiction. 
	
	(ii) If $ x_0\in\partial\Omega $, then for any $ \delta>0 $, by the continuity of $ h $ there exists an open neighborhood $ N $ of $ x_0 $ such that $ h(x)\leqslant\delta $ for any $ x\in N\cap\Omega $. We will show that if $\delta$ is small enough, $ \mathcal{L}h(x)\leqslant0 $ for any $ x\in N\cap\Omega $.
	
	When  $ h(x)\leqslant0 $ for some $ x\in N\cap\Omega $, similar as in the previous analysis, we can get $ \mathcal{L}h(x)\leqslant0 $.
		
	When  $ 0<h(x)\leqslant\delta $ for some $ x\in N\cap\Omega $, 
	we have
	$ 0<\gamma_{_{VS}}-\gamma_{_{FS}}\leqslant\gamma^\varepsilon(h(x))\leqslant \gamma_{_{FV}} $.
	Because $ g'(0)=0 $, $g'(z)>0$ for $z>0$, and $ g'  $ is
	continuous, if $\delta$ is small enough, we know $ -\lambda<\frac1{\varepsilon}Sg'(\frac{h(x)}{\varepsilon})<0 $. Combining $ (1+|\nabla h|^2)^{\frac{1}{2}}\geqslant 1 $ and that $\lambda>0$ is a constant, we get
	\begin{equation*}
		\mathcal{L}h(x)\leqslant\frac{-\bigg(\frac{1}{\varepsilon}Sg'(\frac{h(x)}{\varepsilon})+\lambda\bigg)(1+|\nabla h(x)|^2)}{\gamma^\varepsilon(h(x))}<0.
	\end{equation*}

	Now that $ \mathcal{L}h(x)\leqslant0 $ for any $ x\in N\cap\Omega $, in view of $ x_0 $ is the minimum point of $ h $, by the strict ellipticity of $\mathcal{L}$ and Hopf's lemma, we have $ \frac{\partial h}{\partial\boldsymbol{\nu}}(x_0)<0 $, which contradicts to the Neumann boundary condition.
	
	Therefore, $ h>0 $ in $\bar{\Omega}$.
	
	\textbf{Case II}: $\lambda\leqslant0$. We will show that this case is impossible. This can be done by first proving that $ \max_{x\in\bar{\Omega}}h(x)\leqslant 0 $. Assume $ \max_{x\in\bar{\Omega}}h(x)>0 $ and the maximum is attained at $ x_0 \in\bar{\Omega}$. Similar (and simpler) arguments as in Case I will lead to contradictions.

	
	
	
	
	Hence, $ \max_{x\in\bar{\Omega}}h(x)\leqslant0 $, which implies $ \int_{\Omega}h\mathrm{d}x\leqslant0 $. This contradicts to the assumption that $h\in \mathcal{A}$. Therefore, Case II is impossible.
	
	In summary, $ h>0 $ in $\bar{\Omega}$ and $ \lambda>0 $.
\end{proof}

\section{Asymptotic analysis for the equilibrium state}
In this section, we always assume that $g(z)$ satisfies the conditions (1), (2) and (3).
As $\varepsilon\rightarrow 0$, it is indicated from the numerical simulations that there exists a transition layer near the contact line $\Gamma$ where the derivatives of $h(x)$ changes dramatically. We can roughly determine the contact line $\Gamma$ by the constant mean curvature film surface together with the volume constraint.
This indicates that there is a singular behavior of the equilibrium profile $h(x)$ which can be analyzed by using matched asymptotic analysis.

To do so, we first define a signed distance function $\phi(x)=\text{dist}(x,\Gamma)$ between $x$ and the contact line $\Gamma$. We assume that when the point $x$ belongs to the film region, the value of $\phi(x)$ is positive.

\textbf{Outer expansion:}

Let the outer solutions be expanded as
\begin{equation*}
	h_{out}^i=h_0^i+\varepsilon h_1^i+\varepsilon^2 h_2^i+\cdots,\qquad \lambda_{out}^i=\lambda_0^i+\varepsilon \lambda_1^i + \varepsilon^2 \lambda_2^i+\cdots,
\end{equation*}
where the superscript $i=f$ corresponds to the film region and $i=p$ stands for the precursor region. Then
\[
\gamma^\varepsilon(h_{out}^i)=\gamma_{_{FV}}-S\cdot g\Big(\frac{h_{out}^i}{\varepsilon}\Big),\quad\mbox{and}\quad \frac{\mathrm{d}\gamma^\varepsilon}{\mathrm{d}h}(h_{out}^i)=-\frac1{\varepsilon}S\cdot g'\Big(\frac{h_{out}^i}{\varepsilon}\Big).
\]
In the film region ($\phi(x)\gg \varepsilon$), the height is positive with $h_0^f>0$.  Since $ \lim\limits_{z\to+\infty}g(z)=0 $, we have
\[
\Big|g\Big(\frac{h_{out}^f}{\varepsilon}\Big)\Big|\sim \Big|g\Big(\frac{h_{0}^f}{\varepsilon}\Big)\Big|\ll1,\text{ as }\varepsilon\to0.
\]
Because $ \lim\limits_{z\to+\infty}zg'(z)=0 $,
\[
\frac{1}{\varepsilon}g'\Big(\frac{h_{out}^f}{\varepsilon}\Big)\sim \frac{1}{h_{0}^f}\frac{h_{0}^f}{\varepsilon}g'\Big(\frac{h_{0}^f}{\varepsilon}\Big)\ll1,\text{ as }\varepsilon\to0.
\]
Hence, the leading term of $ \gamma^\varepsilon(h_{out}^f) $ is $ \gamma_{_{FV}} $ and $ \frac{\mathrm{d}\gamma^\varepsilon}{\mathrm{d}h}(h^f_{out})\ll1 $. From the governing equation \eqref{eq:equilib}, we have for the leading order terms in $\mathcal{O}(1)$ that
\begin{align}\label{eq:const-curv}
	-\gamma_{_{FV}}\nabla\cdot\Big(\frac{\nabla h_{0}^f}{\sqrt{1+|\nabla h_{0}^f|^2}}\Big)=\lambda_0^f.
\end{align}
This is exactly the constant mean curvature condition which implies a spherical cap shape of the film surface. From the results in Theorem \ref{positivity} and the definition of asymptotic series, we know that $ \lambda_0^f>0 $ when $\varepsilon$ is small enough. This implies that the shape of the film surface is concave which is consistent with numerical simulations.

In the precursor region ($-\phi(x)\gg \varepsilon$), it is suggested that  $ h^p_{out}\ll1 $ from the definition of the precursor region. Hence, $h_0^p=0$.

Then $\gamma^\varepsilon(h^p_{out})=\gamma_{_{FV}}-Sg(h_1^p)+\mathcal{O}(\varepsilon)$ and $\frac{\mathrm{d}\gamma^\varepsilon}{\mathrm{d}h}(h^p_{out})=-\frac{S}{\varepsilon}g'(h_1^p)-Sg''(h_1^p)h_2^p+\mathcal{O}(\varepsilon)$. Substituting these equations into the governing equation \eqref{eq:equilib}, for the leading order terms in $\mathcal{O}(\frac1{\varepsilon})$, we obtain that
\begin{equation*}
	g'(h_1^p)=0.
\end{equation*}
Since $ h_1^p $ is finite and $ g'(z) $ only has one zero point at $ z=0 $, we obtain $h_1^p=0$. The first order terms in $\mathcal{O}(1)$ leads to
\begin{equation*}
	-Sg''(h_1^p)h_2^p=\lambda_0^p,
\end{equation*}
which implies that $h_2^p=-\frac{\lambda_0^p}{Sg''(0)}>0$ since $ g(z)$ attains its minimum at $ z=0$ strictly. It should be noted that $\lambda_0^p$ and $\lambda_0^f$ should be the same due to the matching condition explained below. We shall denote it by $\lambda_0$ without any superscript. In summary, the outer solution in the precursor region is given by
\begin{equation}\label{eq:height}
	h^p_{out}=-\frac{\lambda_0}{Sg''(0)}\varepsilon^2+\mathcal{O}(\varepsilon^3).
\end{equation}

\textbf{Inner expansion: }

By the balance of dominant terms, it is easy to observe that the boundary layer thickness is $\mathcal{O}(\varepsilon)$. It is helpful to introduce a rescaled inner variable $ \xi:=\frac{\phi(x)}{\varepsilon} $ along the normal direction to $\Gamma$. We consider the decomposition in a local coordinate system near $\Gamma$, i.e.
\[
x(s,\xi;\varepsilon)=x(s;\varepsilon)+\varepsilon\xi\boldsymbol{\nu}(s;\varepsilon),
\]
where  $ x(s;\varepsilon) $ is a parametrization of $\Gamma$, $ s $ is its arc length parameter and $ \boldsymbol{\nu}$ is the outward unit normal vector.

The gradient operator and the Laplace operator can be recast in this local coordinate system as (cf. Ref. \refcite{dziwnik2017}):
\begin{align*}
	\nabla&=\varepsilon^{-1}\boldsymbol{\nu}\partial_\xi+\frac{1}{1+\varepsilon\xi\kappa}\textbf{t}\partial_s,\\
	\Delta&=\frac{1}{\varepsilon^{2}} \frac{1}{1+\varepsilon \xi \kappa} \partial_{\xi}\left((1+\varepsilon \xi \kappa) \partial_{\xi} \right)+\frac{1}{1+\varepsilon \xi \kappa}\partial_s\left(\frac{1}{1+\varepsilon \xi \kappa} \partial_{s} \right),
\end{align*}
where $\mathbf{t}$ is the tangent vector along $\Gamma$ and $\kappa$ is the curvature of $\Gamma$ defined through $\partial_s\boldsymbol{\nu}=\kappa\textbf{t}$.

We then rescale $ h $ as $H(s,\xi)=\frac{h(x)}\varepsilon$.
Direct calculations lead to
\begin{align*}
	&\nabla h=\boldsymbol{\nu}\cdot \partial_\xi H+\varepsilon\textbf{t}\cdot\partial_sH+\mathcal{O}(\varepsilon^2),\\
	&\Delta h=\frac{1}{\varepsilon} \partial_{\xi\xi} H+\kappa \partial_\xi H-\varepsilon\kappa^2\xi \partial_\xi H+\varepsilon\partial_{ss}H+\mathcal{O}(\varepsilon^2).
\end{align*}
In addition,  if we denote $\tilde{\gamma}(H):=\gamma_{_{FV}}-S(e^{-H}-2e^{-\frac12H})$, then $\gamma^\varepsilon(h)=\tilde{\gamma}(H)$.

Let the inner asymptotic expansion be
\begin{equation*}
	H(s,\xi)=H_0(s,\xi)+\varepsilon H_1(s,\xi)+\cdots,\qquad \lambda_{in}=\Lambda_0+\varepsilon \Lambda_1 + \varepsilon^2 \Lambda_2+\cdots.
\end{equation*}
Substituting them into the governing equation \eqref{eq:equilib}, for the leading order terms in $\mathcal{O}(\frac1{\varepsilon})$, we obtain
\begin{equation*}
	\tilde{\gamma}'(H_0)\frac1{\sqrt{1+(\partial_\xi H_0)^2}}-\tilde{\gamma}(H_0)\frac{\partial_{\xi\xi} H_0}{(1+(\partial_\xi H_0)^2)^{\frac{3}{2}}}=0.
\end{equation*}
Multiplying this equation by $ \partial_\xi H_0 $, we can integrate this equation once and obtain
\begin{equation}\label{eq:in-sol}
	\tilde{\gamma}(H_0)^2=C_0(1+(\partial_\xi H_0)^2),
\end{equation}
with some constant $C_0>0$. Using the matching conditions that
\begin{align*}
	&\lim\limits_{\phi(x)\rightarrow 0^-}h_{out}^p(x)=\lim\limits_{\xi\rightarrow-\infty}\varepsilon H,\quad \lim\limits_{\phi(x)\rightarrow 0^-}\boldsymbol{\nu}\cdot\nabla h_{out}^p(x)=\lim\limits_{\xi\rightarrow-\infty} \partial_\xi H,\\
	&\lim\limits_{\phi(x)\rightarrow 0^-}\lambda_{out}^p=\lim\limits_{\xi\rightarrow-\infty}\lambda_{in},
\end{align*}
we have $\lambda_i^p=\Lambda_i$ for $i=0,1,\ldots$, and $\lim\limits_{\xi\rightarrow-\infty}H_0=\lim\limits_{\xi\rightarrow-\infty}\partial_\xi H_0=0$. Taking $\xi\rightarrow-\infty$ in \eqref{eq:in-sol}, we obtain $C_0=(\gamma_{_{VS}}-\gamma_{_{FS}})^2$. Similarly, we have the matching conditions when $\xi\rightarrow+\infty$:
\begin{equation}\label{height_matching}
	\begin{aligned}
	&\lim\limits_{\phi(x)\rightarrow 0^+}h_{out}^f(x)=\lim\limits_{\xi\rightarrow+\infty}\varepsilon H,\quad\lim\limits_{\xi\rightarrow+\infty}\partial_\xi H_0=\lim\limits_{\phi(x)\rightarrow 0^+}\boldsymbol{\nu}\cdot\nabla_{x}h_{0}^f(x)=\tan\theta_e,
	\end{aligned}
\end{equation}
and $\lambda_i^f=\Lambda_i$ ($i=0,1,\ldots$), where $\theta_e$ is the apparent contact angle between the film surface and the substrate. Note that we focus on the case  $\theta_e<\frac{\pi}2$. The positivity of the film volume $V$ guarantees that $\theta_e>0$.  The first matching condition of \eqref{height_matching} implies that $$ \lim\limits_{\xi\rightarrow+\infty}H_0=\lim\limits_{\phi(x)\rightarrow 0^+}\frac{h_{0}^f(x)}{\varepsilon} =+\infty. $$ Taking $\xi\rightarrow+\infty$ in \eqref{eq:in-sol}, we obtain that
\begin{equation}\label{eq:Young}
	\gamma_{_{FV}}\cos\theta_e=\gamma_{_{VS}}-\gamma_{_{FS}},
\end{equation}
which recovers with the well-known Young's equation. The detailed profile of the inner solution can be solved from \eqref{eq:in-sol}:
\begin{equation}\label{eq:inner}
	H_0(s,\xi)=q^{-1}(s,\xi),\quad \mbox{with} \quad q(s,H)=\int\frac{\mathrm{d}H}{\sqrt{\Big(\frac{\tilde{\gamma}(H)}{\gamma_{_{VS}}-\gamma_{_{FS}}}\Big)^2-1}},
\end{equation}
where $ H_0(s,\xi) $ is the inverse function of $ q(s,H) $ with respect to $H$.

It is obvious that $q(s,H)$ is monotonically increasing from $0$ to $+\infty$ with respect to $H$. Thus $ H_0(s,\xi)$ is also increasing in $\xi$. The leading order profile of the inner solution is illustrated in Figure \ref{fig:inner}.

\begin{figure}[tbhp]
	\centering
	\includegraphics[width=10.0cm]{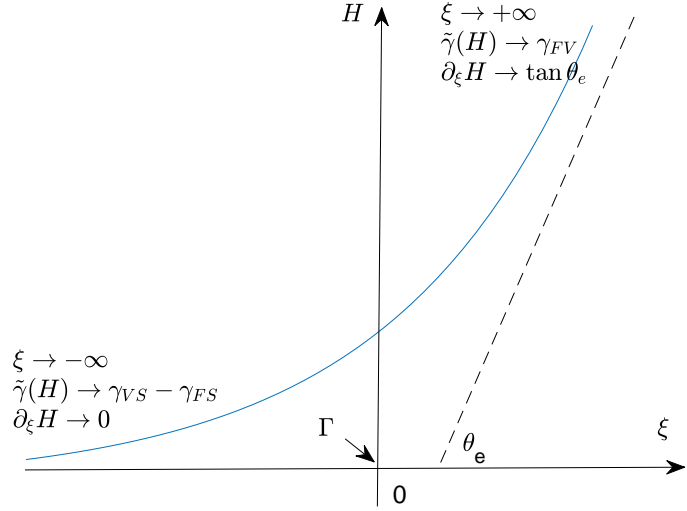}
	\caption{\label{fig:inner}An illustration of the inner solution by the asymptotic analysis.}
\end{figure}

To summarize, by using the above asymptotic analysis, we conclude that as $\varepsilon$ goes to zero, the regularized model \eqref{eq:equilib} asymptotically approaches the original sharp-interface model in the following sense:
\begin{enumerate}
	\item The Young's equation \eqref{eq:Young} is perfectly recovered in the macroscopic scale;
	\item The equilibrium shape approaches to a spherical cap, and the constant mean curvature of the spherical cap can be determined by using the volume constraint;
	\item The thickness of the precursor layer will decrease to zero at the second-order rate, i.e., $\mathcal{O}(\varepsilon^2)$.
\end{enumerate}
We note that the above conclusions will be also verified by our numerical simulations in Section 5.

\section{$ \Gamma $-convergence}

In this section, we shall prove the $\Gamma$-convergence of the proposed model to the original sharp-interface model. The convergence result and its proof rely on some preliminary knowledge about bounded variation functions, and readers may refer to \ref{bv_preliminary}.

\subsection{Energy functional and the convergence result}
Let us recall the energy density function of the proposed model:
\begin{equation*}
	\gamma^{\varepsilon}(h)=\gamma_{_{FV}}-S g\left(\frac{h}{\varepsilon}\right).
\end{equation*}
where $S=\gamma_{_{VS}}-\gamma_{_{FV}}-\gamma_{_{FS}}$.
For any positive constant $ V $, we extend the free energy functional for any $ h\in L^1(\Omega) $. Define
\begin{equation}\label{func:precursor}
	F^{\varepsilon}(h)=\begin{cases}
		\int_{\Omega} \gamma^{\varepsilon}(h)\sqrt{1+|\nabla h|^2}\mathrm{d}x, \quad\quad &\text{if } \, h\in W^{1,1}(\Omega),\ \int_{\Omega}h\mathrm{d}x=V,\\
		+\infty, &\text{otherwise in } L^1(\Omega).
	\end{cases}
\end{equation}
In contrast, the energy functional of the original sharp-interface model is recast as:
\begin{equation}\label{func:sharp}
	F(h)=\begin{cases}
		\gamma_{_{FV}}\int_{\Omega}\sqrt{1+|Dh|^2}+S\cdot\mathcal{L}^n(\mathcal{N}(h)),\quad\quad &\text{if } h\in BV(\Omega),\ h\geqslant0\ a.e. \text{ in }\Omega,\\
		&\text{and }\int_{\Omega}h\mathrm{d}x=V,\\
		\\
		+\infty, &\text{otherwise in } L^1(\Omega),
	\end{cases}
\end{equation}
where $ \mathcal{L}^n $ is the $ n $-dimensional Lebesgue measure and $\mathcal{N}(h)=\overline{\{x:h(x)=0\}} $ is the closure of the zero points of $ h $. The definition of $ \int_{\Omega}\sqrt{1+|Dh|^2} $ is shown in \eqref{area_formula_decompositon} which represents the perimeter of the subgraph $ U:=\{(x,t)\in\Omega\times\mathbb{R}:t<h(x)\} $ of $ h $ in $\Omega$.\cite{giusti1984} 
It is necessary to assume $ h\geqslant0 \ a.e. $ in $\Omega$, because the sharp-interface model does not make sense if $ h<0 $.

Now, we present the main theorem about the convergence:
\begin{theorem}{($\Gamma$-convergence)}\label{main_theorem}
	Let $\Omega \subset \mathbb{R}^{n}$ be an open bounded set with Lipschitz boundary, the Young's angle $\theta_e\in(0,\frac{\pi}{2})$, and assume that $g(z)$ satisfies the conditions (1), (2) and ($3'$). Then, the family $ F^{\varepsilon}(h) $ $\Gamma$-converges to $ F(h) $ in $ L^1(\Omega) $, as $\varepsilon\to0$, if $ \mathcal{H}^{n-1}(\partial \mathcal{N}(h))<\infty $, and there is no approximate jump point in $ \{x\in\Omega:h(x)=0\} $.
\end{theorem}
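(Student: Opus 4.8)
The plan is to establish the two standard $\Gamma$-convergence inequalities separately, and the device used throughout is to view $F^{\varepsilon}$ as a height-weighted perimeter of the subgraph. Writing $U_h:=\{(x,t)\in\Omega\times\mathbb{R}:t<h(x)\}$ and letting $t$ be the vertical coordinate, for $h\in W^{1,1}(\Omega)$ one has $F^{\varepsilon}(h)=\int_{\partial^{*}U_h\cap(\Omega\times\mathbb{R})}\gamma^{\varepsilon}(t)\,\mathrm{d}\mathcal{H}^{n}$. Conditions (1)--(2) give $\gamma^{\varepsilon}(t)\geqslant\gamma_{_{FV}}+S=\gamma_{_{VS}}-\gamma_{_{FS}}>0$ for every $t$ (since $g\geqslant-1$) and $\gamma^{\varepsilon}(t)\leqslant\gamma_{_{FV}}$ for $t\geqslant0$ (since $g\leqslant0$ on $[0,\infty)$); moreover, by monotonicity of $g$ on $(0,\infty)$ and $g(+\infty)=0$, for fixed $\sigma>0$ one has $\gamma^{\varepsilon}(t)\geqslant\gamma_{_{FV}}(1-\eta_{\varepsilon})$ on $\{t>\sigma\}$ with $\eta_{\varepsilon}:=|S|\,|g(\sigma/\varepsilon)|/\gamma_{_{FV}}\to0$. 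I will also use that $h_{\varepsilon}\to h$ in $L^{1}(\Omega)$ forces $\mathbf{1}_{U_{h_{\varepsilon}}}\to\mathbf{1}_{U_{h}}$ in $L^{1}_{\mathrm{loc}}(\Omega\times\mathbb{R})$.

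For the \emph{liminf inequality}, take $h_{\varepsilon}\to h$ in $L^{1}$ with $\liminf_{\varepsilon}F^{\varepsilon}(h_{\varepsilon})<\infty$ and pass to a subsequence realizing the liminf with uniformly bounded energy, so $h_{\varepsilon}\in W^{1,1}(\Omega)$ and $\int_{\Omega}h_{\varepsilon}=V$. The bound $\gamma^{\varepsilon}\geqslant\gamma_{_{VS}}-\gamma_{_{FS}}>0$ and $\sqrt{1+|p|^{2}}\geqslant|p|$ yield a uniform $BV$ bound, hence $h\in BV(\Omega)$ with $\int_{\Omega}h=V$; condition $(3')$ (growth of $g$ at $-\infty$) with Fatou's lemma excludes $\mathcal{L}^{n}(\{h<0\})>0$, so $h\geqslant0$ a.e. Fixing $\sigma>0$ with $\mathcal{L}^n(\{h=\sigma\})=0$ and splitting the graph of $h_{\varepsilon}$ over $\{h_{\varepsilon}>\sigma\}$ and $\{h_{\varepsilon}\leqslant\sigma\}$, the two pointwise weight bounds above give
\[
F^{\varepsilon}(h_{\varepsilon})\ \geqslant\ \gamma_{_{FV}}(1-\eta_{\varepsilon})\,P\big(U_{h_{\varepsilon}},\Omega\times(\sigma,\infty)\big)+(\gamma_{_{FV}}+S)\,\mathcal{L}^{n}(\{h_{\varepsilon}\leqslant\sigma\}).
\]
Letting $\varepsilon\to0$, lower semicontinuity of perimeter on the open cylinder $\Omega\times(\sigma,\infty)$ and $\liminf_{\varepsilon}\mathcal{L}^{n}(\{h_{\varepsilon}\leqslant\sigma\})\geqslant\mathcal{L}^n(\{h<\sigma\})$ give a $\sigma$-dependent lower bound; letting then $\sigma\to0^{+}$ and using $P(U_{h},\Omega\times(\sigma,\infty))\uparrow P(U_{h},\Omega\times(0,\infty))=\int_{\Omega}\sqrt{1+|Dh|^{2}}-\mathcal{L}^{n}(\mathcal{N}(h))$ (the last equality because $h\geqslant0$, so $\partial^{*}U_h\cap(\Omega\times\{0\})$ projects onto $\{h=0\}$, with $\mathcal{L}^{n}(\{h=0\})=\mathcal{L}^{n}(\mathcal{N}(h))$ by $\mathcal{H}^{n-1}(\partial\mathcal{N}(h))<\infty$) together with $\mathcal{L}^n(\{h<\sigma\})\downarrow\mathcal{L}^{n}(\mathcal{N}(h))$, one obtains $\liminf_{\varepsilon}F^{\varepsilon}(h_{\varepsilon})\geqslant\gamma_{_{FV}}\int_{\Omega}\sqrt{1+|Dh|^{2}}+S\,\mathcal{L}^{n}(\mathcal{N}(h))=F(h)$.

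For the \emph{recovery sequence}, given admissible $h$ with $\mathcal{H}^{n-1}(\partial\mathcal{N}(h))<\infty$ and no approximate jump point in $\{h=0\}$, I would (i) replace $h$ by $h\,\psi_{\rho}$, where $\psi_{\rho}$ is smooth, vanishes on $\mathcal{N}(h)$, and equals $1$ outside the $\rho$-neighborhood $T_{\rho}$ of $\mathcal{N}(h)$; (ii) mollify at a scale $\tau\ll\rho$; (iii) diagonalize $\rho(\varepsilon),\tau(\varepsilon)\to0$ and finally enforce $\int_{\Omega}h_{\varepsilon}=V$ by a vanishing perturbation supported in $\{h>\rho\}$. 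The no-jump hypothesis ensures the trace of $h$ on $\partial\mathcal{N}(h)$ from the film side is $0$, which (with $\mathcal{H}^{n-1}(\partial\mathcal{N}(h))<\infty$, giving $|T_{\rho}|\leqslant C\rho$ and $|Dh|(\partial\mathcal{N}(h))=0$) makes the spurious terms $h\,\nabla\psi_{\rho}$ negligible in $L^{1}$, so that $\int_{\Omega}\sqrt{1+|\nabla h_{\varepsilon}|^{2}}\,\mathrm{d}x\to\int_{\Omega}\sqrt{1+|Dh|^{2}}$ and $\mathcal{L}^{n}(\{h_{\varepsilon}=0\})\to\mathcal{L}^{n}(\mathcal{N}(h))$. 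Since $h_{\varepsilon}\geqslant0$, splitting over $\{h_{\varepsilon}>0\}$ (where $\gamma^{\varepsilon}(h_{\varepsilon})\leqslant\gamma_{_{FV}}$) and $\{h_{\varepsilon}=0\}$ (where $\gamma^{\varepsilon}\equiv\gamma_{_{FV}}+S$) gives
\[
F^{\varepsilon}(h_{\varepsilon})\ \leqslant\ \gamma_{_{FV}}\int_{\Omega}\sqrt{1+|\nabla h_{\varepsilon}|^{2}}\,\mathrm{d}x+S\,\mathcal{L}^{n}(\{h_{\varepsilon}=0\}),
\]
whose right-hand side tends to $F(h)$, hence $\limsup_{\varepsilon}F^{\varepsilon}(h_{\varepsilon})\leqslant F(h)$. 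I expect the liminf half to be comparatively soft once the subgraph viewpoint is adopted; the main obstacle is the recovery sequence, specifically making the cutoff-plus-mollification keep the area functional from increasing in the limit while keeping $\mathcal{L}^{n}(\{h_{\varepsilon}=0\})$ asymptotic to $\mathcal{L}^{n}(\mathcal{N}(h))$ --- this is exactly where the hypotheses ``$\mathcal{H}^{n-1}(\partial\mathcal{N}(h))<\infty$'' and ``no approximate jump point in $\{h=0\}$'' are forced, and where the delicate measure-theoretic identifications (namely $\mathcal{L}^{n}(\mathcal{N}(h))=\mathcal{L}^{n}(\{h=0\})$, and vanishing of $|Dh|$ and of the jump set near $\partial\mathcal{N}(h)$) must be carried out. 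A lesser but real nuisance is fixing the volume constraint exactly with a correction that is $o(\varepsilon)$ near the bare substrate, so as not to destroy the $S\,\mathcal{L}^{n}(\mathcal{N}(h))$ contribution of the precursor.
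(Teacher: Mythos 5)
Your compactness step and the overall two-inequality structure match the paper, but your \emph{liminf} argument is a genuinely different and essentially correct route. The paper proves the lower bound by a layer-cake decomposition of the $g$-term at the truncation level $\varepsilon_j\log(1/\varepsilon_j)$ and three applications of the Fonseca--Leoni relaxation theorem (its Lemma 4.3) with the integrands $\sqrt{1+\xi^2}$, $\chi_{_{h>0}}\sqrt{1+\xi^2}$, $\chi_{_{h<0}}\sqrt{1+\xi^2}$ applied to shifted sequences, after which the absolutely continuous, Cantor and jump contributions on $\{h=0\}$ must be killed one by one --- this is exactly where the paper invokes $\mathcal{H}^{n-1}(\partial\mathcal{N}(h))<\infty$ and the no-jump hypothesis. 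Your route --- reading $F^{\varepsilon}$ as a height-weighted perimeter of the subgraph, slicing at level $\sigma$, and using plain lower semicontinuity of $P(\cdot,\Omega\times(\sigma,\infty))$ together with Fatou for $\mathcal{L}^n(\{h_\varepsilon\leqslant\sigma\})$ --- bypasses the relaxation machinery and absorbs the vertical (jump and Cantor) pieces of $\partial^*U_h$ above height $0$ into $P(U_h,\Omega\times(0,\infty))$ automatically. Since you end with $S\,\mathcal{L}^n(\{h=0\})$ and $S<0$ with $\{h=0\}\subset\mathcal{N}(h)$, the inequality $\liminf F^{\varepsilon_j}(h_j)\geqslant F(h)$ follows without the exact identification $\mathcal{L}^n(\{h=0\})=\mathcal{L}^n(\mathcal{N}(h))$; in fact your lower bound does not seem to need the two extra hypotheses at all. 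What you pay for this is having to quote the structure theory of $BV$ subgraphs, namely $P(U_h,\Omega\times\mathbb{R})=\int_\Omega\sqrt{1+|Dh|^2}$ and $\mathcal{H}^n\big(\partial^*U_h\cap(\Omega\times\{0\})\big)=\mathcal{L}^n(\{h=0\})$ for $h\geqslant0$ (with $\{h=0\}$ taken for the precise representative); these are standard but should be cited, not assumed.

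The recovery sequence is where your proposal has a genuine gap, and it is also more complicated than necessary. Two concrete problems with the cutoff step. First, $|T_\rho|\leqslant C\rho$ does \emph{not} follow from $\mathcal{H}^{n-1}(\partial\mathcal{N}(h))<\infty$: finite Hausdorff measure does not control upper Minkowski content (for $n=2$, a convergent union of disjoint circles has finite length while $|T_\rho|/\rho\to\infty$). Second, even granting that, making $\int_\Omega|h|\,|\nabla\psi_\rho|\,\mathrm{d}x\lesssim\rho^{-1}\int_{T_\rho\setminus\mathcal{N}(h)}h\,\mathrm{d}x$ vanish requires $\int_{T_\rho}h\,\mathrm{d}x=o(\rho)$, and ``zero trace from the film side'' does not yield this without a Poincar\'e-type estimate on the tubular neighborhood, which again needs more regularity of $\partial\mathcal{N}(h)$ than the theorem assumes. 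So the assertion that the spurious terms $h\nabla\psi_\rho$ are negligible is not justified under the stated hypotheses.

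The paper's construction shows the cutoff is unnecessary. It takes $\tilde h_j=\sum_i\eta_{\varepsilon_{j,i}}*(\varphi_i h)$ (Lemma 4.5 and Appendix B): nonnegativity is preserved, the areas converge strictly, and $\operatorname{Supp}(\tilde h_j)$ lies in a shrinking neighborhood of $\operatorname{Supp}(h)$, whence $\limsup_j\mathcal{L}^n(\operatorname{Supp}\tilde h_j)\leqslant\mathcal{L}^n(\operatorname{Supp}h)$. One then does not need $\tilde h_j$ to vanish exactly on $\mathcal{N}(h)$: since $-Sg(\tilde h_j/\varepsilon_j)\leqslant0$ wherever $\tilde h_j>0$ and equals $S$ where $\tilde h_j=0$, one gets directly
\begin{equation*}
\int_\Omega(-S)\,g\Big(\frac{\tilde h_j}{\varepsilon_j}\Big)\sqrt{1+|\nabla\tilde h_j|^2}\,\mathrm{d}x\ \leqslant\ S\,\mathcal{L}^n\big(\Omega\setminus\operatorname{Supp}\tilde h_j\big),
\end{equation*}
which passes to the limit using only the support estimate and $\mathcal{L}^n(\partial\mathcal{N}(h))=0$. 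The volume constraint is restored by adding the constant $d_j/\mathcal{L}^n(\Omega)$ with $|d_j|\leqslant\varepsilon_j^2$; by the mean value theorem this perturbs $g(\cdot/\varepsilon_j)$ by $O(\varepsilon_j)$ uniformly, so it does not disturb the precursor contribution --- this resolves exactly the ``volume-fix'' nuisance you flag at the end. I would replace your steps (i)--(iii) by this construction; your final display for the upper bound then goes through with $\{h_\varepsilon=0\}$ replaced by $\Omega\setminus\operatorname{Supp}(h_\varepsilon)$.
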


\begin{remark}
	In Theorem \ref{main_theorem}, we assume $\mathcal{H}^{n-1}(\partial \mathcal{N}(h))<\infty$, which has obvious physical meanings if $h$ is smooth: when $n=1$, it implies that the number of contact points is finite; when $n=2$, it implies that the length of contact line is finite.
\end{remark}


\begin{remark}
	The definition of the approximate jump points is given by Definition \ref{def_jump}. If there exists an approximate jump point in $ \{x\in\Omega:h(x)=0\} $, then Young's contact angle at this point must be ${\pi}/{2}$, which is excluded by the assumption of the proposed model (i.e., we consider Young's angle $\theta_e \in (0,{\pi}/{2})$ in this paper).
\end{remark}

By the definition of the $\Gamma$-convergence, it is sufficient to establish the compactness, the lower estimate and the upper estimate. We will show them consequently in the following three subsections.

\subsection{Compactness}

Using Lemma \ref{compact_lemma}, we can establish the compactness under the free energy functional \eqref{func:precursor}.

\begin{theorem}{(Compactness)}\label{compact2}
	Let $\Omega \subset \mathbb{R}^{n}$ be an open bounded set with Lipschitz boundary and the Young's angle $\theta_e\in(0,\frac{\pi}{2})$. Assume $g$ satisfies the conditions (1), (2) and ($3'$). Let $\varepsilon_{j} \rightarrow 0^{+}$and let $\left\{h_{j}\right\} \subset W^{1,1}(\Omega)$ be a sequence such that
	$$
	M:=\sup _{j} F^{\varepsilon_{j}}\left(h_{j}\right)<\infty.
	$$
	Then there exists a subsequence $\left\{h_{j_{k}}\right\}$ of $\left\{h_{j}\right\}$ and $h \in B V(\Omega )$ such that $h_{j_{k}} \rightarrow h$ in $L^{1}(\Omega)$.  Moreover, $\int_\Omega h\mathrm{d}x=V$ and $ h\geqslant0 $ for almost every $ x\in\Omega$.
\end{theorem}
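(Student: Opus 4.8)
The plan is to derive a uniform coercivity estimate from the regularized density, apply standard $BV$ compactness to extract an $L^1$-convergent subsequence, and then exploit the blow-up of $g$ at $-\infty$ to obtain non-negativity of the limit; the volume constraint will pass trivially to the limit.

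First I would record a pointwise lower bound for $\gamma^\varepsilon$ that is independent of $\varepsilon$. By conditions (1) and (2), $g$ is strictly decreasing on $(-\infty,0)$ and strictly increasing on $(0,\infty)$, so $g(z)\geqslant g(0)=-1$ for all $z$; since $S<0$ and $\gamma_{_{VS}}-\gamma_{_{FS}}>0$, this gives $\gamma^\varepsilon(h)=\gamma_{_{FV}}-Sg(h/\varepsilon)\geqslant\gamma_{_{FV}}+S=\gamma_{_{VS}}-\gamma_{_{FS}}=:c_0>0$ for every $h\in\mathbb{R}$ and $\varepsilon>0$. Note also that since $h_j\in W^{1,1}(\Omega)$ and $M<\infty$, the definition \eqref{func:precursor} forces $\int_\Omega h_j\,\mathrm{d}x=V$ for all $j$. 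Using $\sqrt{1+t^2}\geqslant\max\{1,t\}$ one gets $c_0\int_\Omega|\nabla h_j|\,\mathrm{d}x\leqslant F^{\varepsilon_j}(h_j)\leqslant M$, so $\{\nabla h_j\}$ is bounded in $L^1(\Omega)$; together with the fixed mean $\frac{1}{|\Omega|}\int_\Omega h_j\,\mathrm{d}x=V/|\Omega|$ and the Poincar\'e--Wirtinger inequality on the bounded Lipschitz domain, $\{h_j\}$ is bounded in $W^{1,1}(\Omega)$, hence in $BV(\Omega)$. Then I would invoke Lemma \ref{compact_lemma} (the compact embedding of $BV$ into $L^1$ on a bounded Lipschitz domain) to extract a subsequence $h_{j_k}\to h$ in $L^1(\Omega)$ with $h\in BV(\Omega)$, and pass to a further subsequence, not relabeled, along which $h_{j_k}\to h$ a.e.; the volume constraint then reads $\int_\Omega h\,\mathrm{d}x=\lim_k\int_\Omega h_{j_k}\,\mathrm{d}x=V$.

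The substantive step, and the one I expect to be the main obstacle, is showing $h\geqslant0$ a.e.; it is where condition ($3'$), precisely $\lim_{z\to-\infty}g(z)=+\infty$, enters. I would argue by contradiction: if $\mathcal{L}^n(\{h<0\})>0$, choose $m$ with $a:=\mathcal{L}^n(\{h<-1/m\})>0$ and fix an arbitrary $K>0$. Pick $Z<0$ with $g(z)\geqslant K$ for $z\leqslant Z$; then for $k$ so large that $-1/(2m\varepsilon_{j_k})\leqslant Z$, one has $g(h_{j_k}/\varepsilon_{j_k})\geqslant K$ and hence $\gamma^{\varepsilon_{j_k}}(h_{j_k})\geqslant -SK$ on $\{h_{j_k}<-1/(2m)\}$, so, using $\sqrt{1+|\nabla h|^2}\geqslant1$,
\[
M\;\geqslant\;F^{\varepsilon_{j_k}}(h_{j_k})\;\geqslant\;\int_{\{h_{j_k}<-1/(2m)\}}\gamma^{\varepsilon_{j_k}}(h_{j_k})\,\mathrm{d}x\;\geqslant\;-SK\,\mathcal{L}^n\!\left(\{h_{j_k}<-1/(2m)\}\right).
\]
Since $h_{j_k}\to h$ a.e. and $h<-1/m<-1/(2m)$ on a set of measure $a$, dominated convergence yields $\mathcal{L}^n(\{h_{j_k}<-1/(2m)\})\geqslant a/2$ for $k$ large, whence $M\geqslant -SKa/2$ for every $K>0$ — a contradiction; therefore $\mathcal{L}^n(\{h<0\})=0$.

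The delicate point in this last step is the interplay of the two limits $\varepsilon_{j_k}\to0$ and $h_{j_k}\to h$; I would handle it exactly as above, by first freezing the negative part of $h$ at the fixed level $-1/m$ (so that $h_{j_k}$ is eventually below $-1/(2m)$ on a set of controlled measure) and only afterwards sending $\varepsilon_{j_k}\to0$ with $K$ fixed. The remaining ingredients — the coercivity bound, Poincar\'e--Wirtinger, and $BV$ compactness — are standard, so I anticipate no further difficulties.
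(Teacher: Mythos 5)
Your proposal is correct and follows essentially the same route as the paper: the uniform lower bound $\gamma^\varepsilon\geqslant\gamma_{_{VS}}-\gamma_{_{FS}}>0$ gives a $BV$ bound (you make explicit the Poincar\'e--Wirtinger step that the paper only gestures at), Lemma \ref{compact_lemma} gives the $L^1$-convergent subsequence, and non-negativity comes from the blow-up of $g$ at $-\infty$. The only cosmetic difference is in the last step, where the paper applies Fatou's lemma to $\int_\Omega\gamma^{\varepsilon_{j_k}}(h_{j_k})\,\mathrm{d}x$ while you run an equivalent quantitative level-set contradiction; both are valid.
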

\begin{proof}
	Since $ \gamma^{\varepsilon}(h)\geqslant\gamma_{_{FV}}+S=\gamma_{_{VS}}-\gamma_{_{FS}}>0 $, we have for any $j$ that
	\begin{equation*}
		\begin{aligned}
			F^{\varepsilon_{j}}\left(h_{j}\right)\geqslant(\gamma_{_{VS}}-\gamma_{_{FS}})\int_{\Omega}\sqrt{1+|\nabla h|^2}\mathrm{d}x>\frac{\gamma_{_{VS}}-\gamma_{_{FS}}}{n}\|\nabla h_j\|_{L^1(\Omega)},
		\end{aligned}
	\end{equation*}
	where $\|\nabla h_j\|_{L^1(\Omega)}=\sum_{i=1}^n\int_{\Omega}|\partial_{x_i}h_j|\mathrm{d}x$. Hence
	\[
	\sup_{j}\|\nabla h_j\|_{L^1(\Omega)}<\frac{Mn}{\gamma_{_{VS}}-\gamma_{_{FS}}}<\infty .
	\]
	Because $\Omega$ is bounded, we know that $ \{h_j\} $ is uniformly bounded in $ W^{1,1} $ norm. Therefore, $ \{h_j\} $ is uniformly bounded in BV norm.
	By  Lemma \ref{compact_lemma}, there exists a subsequence $\left\{h_{j_{k}}\right\}_{k=1}^{\infty}$ and a function $h \in B V(\Omega)$ such that
	\[
	h_{j_{k}} \rightarrow h \text { in } L^{1}(\Omega),\quad\quad as \quad k\to+\infty.
	\]
	
	Since $ M:=\sup _{j} F^{\varepsilon_{j}}\left(h_{j}\right)<\infty$, by \eqref{func:precursor} we have $\int_\Omega h_{j_k}\mathrm{d}x=V$ which leads to $\int_\Omega h\mathrm{d}x=V$. The remaining problem is to prove $ h\geqslant0 $ for almost every $ x\in\Omega$. Up to subsequence, let us assume  $ h_{j_{k}} $ converges pointwise to $ h $ for almost every $ x\in\Omega$. Define a set $ A:=\{x\in\Omega:h(x)<0\}$. Since $ \gamma^{\varepsilon_{j_{k}}}(h)>0 $, by Fatou's lemma,
	\begin{equation}\label{ieq:com2}		
		\begin{aligned}
			+\infty&>\liminf_{k\to\infty}F^{\varepsilon_{j_{k}}}(h_{j_{k}})\geqslant\liminf_{k\to\infty}\int_{\Omega}\gamma^{\varepsilon_{j_k}}(h_{j_k})\mathrm{d}x\\
			&\geqslant\int_\Omega\liminf_{k\to\infty}\gamma^{\varepsilon_{j_k}}(h_{j_k})\mathrm{d}x\\
			&\geqslant\int_A \liminf_{k\to\infty}\gamma^{\varepsilon_{j_k}}(h_{j_k})\mathrm{d}x.
		\end{aligned}
	\end{equation}
	For almost every $ x\in A $, we have $\lim\limits_{k\to\infty}\frac{h_{j_k}(x)}{\varepsilon_{j_{k}}}=-\infty$. 	Since $ g $ is continuous and $\lim\limits_{z\to-\infty}g(z)=+\infty$,
	\[
		\liminf_{k\to\infty}\gamma^{\varepsilon_{j_k}}(h_{j_k})=\gamma_{_{FV}}-Sg\bigg(\lim\limits_{k\to\infty}\frac{h_{j_k}}{\varepsilon_{j_{k}}}\bigg)=+\infty,\quad a.e. \ \mbox{in}\ A.
	\]
	Combining \eqref{ieq:com2}, we know $ \mathcal{L}^n(A)=0 $ which implies $ h\geqslant0 \ a.e.$ in $\Omega$.
\end{proof}

\subsection{The lower estimate}

First, we recall a relaxation result (see Theorem 1.1 of Ref. \refcite{fonseca2001}).
\begin{lemma}\label{relaxation_ieq}
	Assume that $f: \Omega \times \mathbb{R} \times \mathbb{R}^{n} \rightarrow[0,+\infty)$ is a Borel integrand, $f(x, h, \cdot)$ is convex in $\mathbb{R}^{n}$, and for all $\left(x_{0}, h_{0}\right) \in \Omega \times \mathbb{R}$ and $\eta>0$ there exists $\delta>0$ such that $f\left(x_{0}, h_{0}, \xi\right)-f(x, h, \xi) \leqslant \eta(1+f(x, h, \xi))$ for all $(x, h) \in \Omega \times \mathbb{R}$ with $\left|x-x_{0}\right|+\left|h-h_{0}\right| \leqslant \delta$ and for all $\xi \in \mathbb{R}^{n} .$ Let $h \in B V_{\mathrm{loc}}(\Omega)$ and let $h_{j} \rightarrow$ h strongly in $L_{\mathrm{loc}}^{1}(\Omega)$, with $h_{j} \in W_{\mathrm{loc}}^{1,1}(\Omega) .$ Then
	$$
	\begin{aligned}
		&\int_{\Omega} f(x, h, \nabla h) \mathrm{d} x+\int_{\Omega} f^{\infty}(x, h, \frac{\mathrm{d}D^{c} h}{\mathrm{d}|D^{c} h|})\mathrm{d}|D^{c} h|+\int_{J_h\cap\Omega} \int_{h^{-}(x)}^{h^{+}(x)} f^{\infty}\left(x, s, \boldsymbol{\nu_{h}}\right) \mathrm{d} s \mathrm{~d} \mathcal{H}^{n-1} \\
		&\quad \leqslant \liminf _{j\to\infty} \int_{\Omega} f\left(x, h_{j}, \nabla h_{j}\right) \mathrm{d} x,
	\end{aligned}
	$$
	where $ Dh $ is the distributional derivative of $ h $,	$ \nabla h $ is the Radon-Nikodym derivative of  $ Dh $ with respect to Lebesgue measure $ \mathcal{L}^n $, $D^{c} h$ is the Cantor part of $Dh$, $ J_h $ is the jump set, $ h^\pm $ are one-side approximate limits, $ \boldsymbol{\nu}_{h} $ is unit normal vector, and $f^{\infty}$ is the recession function of $f$ given by
	$$
	f^{\infty}(x, h, \xi):=\limsup _{t \rightarrow+\infty} \frac{f(x, h, t \xi)}{t}.
	$$
\end{lemma}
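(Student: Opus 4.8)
This is the classical lower semicontinuity theorem for linear-growth (or $+\infty$-valued) convex integrands on $BV$, and the plan is to prove it by the blow-up method. First, assume without loss of generality that $L:=\liminf_{j}\int_{\Omega}f(x,h_{j},\nabla h_{j})\,\mathrm{d}x<\infty$ and, after passing to a subsequence, that the liminf is attained as a limit. Consider the nonnegative Radon measures $\mu_{j}:=f(\cdot,h_{j},\nabla h_{j})\,\mathcal{L}^{n}\lfloor\Omega$; since their masses are uniformly bounded, a further subsequence converges weakly-$*$ to a finite nonnegative measure $\mu$ on $\Omega$ with $\mu(\Omega)\leqslant L$. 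Using the decomposition $Dh=\nabla h\,\mathcal{L}^{n}+D^{c}h+(h^{+}-h^{-})\,\boldsymbol{\nu}_{h}\,\mathcal{H}^{n-1}\lfloor J_{h}$ and the mutual singularity of its three parts, the theorem reduces to three pointwise density estimates: $\frac{\mathrm{d}\mu}{\mathrm{d}\mathcal{L}^{n}}(x_{0})\geqslant f(x_{0},h(x_{0}),\nabla h(x_{0}))$ at $\mathcal{L}^{n}$-a.e.\ $x_{0}$; $\frac{\mathrm{d}\mu}{\mathrm{d}|D^{c}h|}(x_{0})\geqslant f^{\infty}\big(x_{0},\tilde h(x_{0}),\frac{\mathrm{d}D^{c}h}{\mathrm{d}|D^{c}h|}(x_{0})\big)$ at $|D^{c}h|$-a.e.\ $x_{0}$; and $\frac{\mathrm{d}\mu}{\mathrm{d}\mathcal{H}^{n-1}\lfloor J_{h}}(x_{0})\geqslant\int_{h^{-}(x_{0})}^{h^{+}(x_{0})}f^{\infty}(x_{0},s,\boldsymbol{\nu}_{h}(x_{0}))\,\mathrm{d}s$ at $\mathcal{H}^{n-1}$-a.e.\ $x_{0}\in J_{h}$, where $\tilde h$ is the approximate-continuity representative. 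Adding the three resulting measures and integrating over $\Omega$ yields the claim; note that when $f^{\infty}\equiv+\infty$ in some direction the corresponding estimate is vacuous, which is the reason the hypotheses must force $h$ to carry no such singularity.

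For the absolutely continuous part I would pick $x_{0}$ that is simultaneously a Lebesgue point of $h$, of $\nabla h$, and of $\frac{\mathrm{d}\mu}{\mathrm{d}\mathcal{L}^{n}}$, and at which $\mu$ has no singular mass, and rescale via $v_{r}(y):=\frac{h(x_{0}+ry)-h(x_{0})}{r}$, so that $v_{r}\to\nabla h(x_{0})\cdot y$ in $L^{1}(B_{1})$ as $r\to0$. Diagonalizing over $r\to0$ and over $j$ produces $w_{k}\in W^{1,1}(B_{1})$ with $w_{k}\to\nabla h(x_{0})\cdot y$ in $L^{1}(B_{1})$ and $\frac{\mathrm{d}\mu}{\mathrm{d}\mathcal{L}^{n}}(x_{0})\geqslant\liminf_{k}\frac{1}{\mathcal{L}^{n}(B_{1})}\int_{B_{1}}f(x_{0},h(x_{0}),\nabla w_{k})\,\mathrm{d}y$; here the continuity hypothesis $f(x_{0},h_{0},\xi)-f(x,h,\xi)\leqslant\eta(1+f(x,h,\xi))$ is precisely what lets one replace the true coefficients $(x_{0}+ry,h_{j})$ by the frozen ones $(x_{0},h(x_{0}))$ up to an error absorbed into $\eta\,(1+\mathrm{d}\mu_{j})$. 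Convexity of $\xi\mapsto f(x_{0},h(x_{0}),\xi)$ together with Jensen's inequality — and, if $\{\nabla w_{k}\}$ fails to be equi-integrable, a truncation argument that brings $f^{\infty}$ in on the non-equi-integrable part — then closes this case.

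The Cantor and jump densities use the same blow-up but with a two-speed rescaling that exposes a one-dimensional structure. At $|D^{c}h|$- or $\mathcal{H}^{n-1}\lfloor J_{h}$-a.e.\ $x_{0}$, Alberti's rank-one theorem forces the blow-up of $Dh$ to be of the form $(\text{a one-dimensional measure})\otimes\boldsymbol{\nu}$ with $\boldsymbol{\nu}=\boldsymbol{\nu}_{h}(x_{0})$; rescaling space by $r$ and the target by $s_{r}:=|Dh|(B_{r}(x_{0}))/r^{n-1}$ then produces functions converging to a profile depending only on $y\cdot\boldsymbol{\nu}$ — a monotone, nonconstant one-dimensional profile at a Cantor point, and, with the correct choice of additive normalization, the two-level step from $h^{-}(x_{0})$ to $h^{+}(x_{0})$ at a jump point. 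One is thereby reduced to a one-dimensional liminf inequality: if $u_{k}\in W^{1,1}(-1,1)$, $u_{k}\to u$ in $L^{1}(-1,1)$, and $\int_{-1}^{1}f(x_{0},u_{k},u_{k}')\,\mathrm{d}t$ is unbounded, then its liminf dominates $\int_{-1}^{1}f^{\infty}(x_{0},\tilde u,u')\,\mathrm{d}t+\sum_{t\in J_{u}}\int_{u^{-}(t)}^{u^{+}(t)}f^{\infty}(x_{0},s,1)\,\mathrm{d}s$; this is obtained by applying Jensen to the recession function and then performing the change of variables $s=u_{k}(t)$, which converts $\int f(x_{0},u_{k},u_{k}')\,\mathrm{d}t$ into an integral over the levels $s$ swept by $u_{k}$, each contributing $f^{\infty}(x_{0},s,\cdot)$. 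Unwinding the scaling factor $s_{r}$ turns this back into the two desired density estimates.

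The main obstacle is exactly the joint dependence of $f$ on $(x,h)$ at the singular points: one must freeze $x=x_{0}$ while simultaneously (a) carrying out the two-speed rescaling that makes the problem one-dimensional, (b) allowing the target variable to range over the whole interval $[h^{-}(x_{0}),h^{+}(x_{0})]$ across a jump rather than sitting at one value, and (c) tracking where equi-integrability of $\{\nabla h_{j}\}$ is lost, since that is the only mechanism by which $f^{\infty}$ — possibly equal to $+\infty$ — enters. Making (a)--(c) rigorous with all error terms genuinely controlled by the structural inequality on $f$ is the technical heart of the argument; the remainder is bookkeeping with Besicovitch differentiation and weak-$*$ convergence of measures.
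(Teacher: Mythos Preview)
The paper does not prove this lemma: it is quoted verbatim as Theorem~1.1 of Fonseca (2001) and used as a black box in the proof of the liminf inequality. Your proposal, by contrast, outlines the actual proof via the blow-up method of Fonseca--M\"uller, which is indeed the standard route and essentially what the cited reference does. The sketch is sound at the level of strategy: the three density estimates via Besicovitch differentiation, the coefficient-freezing enabled precisely by the one-sided upper semicontinuity hypothesis on $f$, Alberti's rank-one theorem for the singular part, and the reduction to a one-dimensional profile are the correct ingredients. The only caveat is that several steps you flag yourself---the two-speed rescaling at Cantor points, the diagonalization between $r\to0$ and $j\to\infty$, and the truncation to handle non-equi-integrable gradients---each require substantial work to make rigorous, so what you have is a correct plan rather than a complete proof; but that is already more than the paper itself supplies.
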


Now, we give the lower estimate.
\begin{theorem}{(The lower estimate)}\label{lower estimate}
	Let $ \Omega \subset \mathbb{R}^{n}$ be an open bounded set with Lipschitz boundary and the Young's angle $\theta_e\in(0,\frac{\pi}{2})$. Assume $g$ satisfies the conditions (1), (2) and ($3'$). If $ \mathcal{H}^{n-1}(\partial \mathcal{N}(h))<\infty $, and there is no approximate jump point in $ \{x\in\Omega:h(x)=0\} $, for any sequence $\{h_{j}\}$ with $h_{j} \rightarrow h$ in $L^{1}(\Omega)$ and $\varepsilon_j\to0^+$ as $ j\to\infty $, we have:
	\[
	\liminf_{j\to\infty} F^{\varepsilon_j}(h_j)\geqslant F(h).
	\]
\end{theorem}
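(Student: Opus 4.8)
The plan is to decouple the two effects encoded in $F(h)$ — the film/vapor area and the negative substrate term $S\,\mathcal{L}^n(\mathcal{N}(h))$ — by freezing the regularization at a fixed scale $\varepsilon_0>0$, applying the relaxation inequality of Lemma \ref{relaxation_ieq} to the resulting $j$-independent integrand, and then letting $\varepsilon_0\to0$. If $\liminf_{j}F^{\varepsilon_j}(h_j)=+\infty$ there is nothing to prove; otherwise I pass to a (non-relabelled) subsequence along which $F^{\varepsilon_j}(h_j)$ converges to this $\liminf$ and stays bounded by some $M<\infty$, so that each $h_j\in W^{1,1}(\Omega)$ with $\int_\Omega h_j\,\mathrm{d}x=V$. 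Since $h_j\to h$ in $L^1(\Omega)$, Theorem \ref{compact2} yields $h\in BV(\Omega)$, $h\geqslant 0$ a.e.\ in $\Omega$ and $\int_\Omega h\,\mathrm{d}x=V$, so in particular $F(h)<\infty$; I may also assume $h_j\to h$ pointwise a.e.

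The key observation is a monotonicity of the regularized densities. By condition (2), for every fixed $t\in\mathbb{R}$ the map $\varepsilon\mapsto\gamma^{\varepsilon}(t)=\gamma_{_{FV}}-S\,g(t/\varepsilon)$ is non-increasing on $(0,\infty)$: for $t>0$ the value $g(t/\varepsilon)$ increases as $\varepsilon\downarrow0$ by strict monotonicity of $g$ on $(0,\infty)$ together with $-S>0$; for $t<0$ one uses the monotonicity of $g$ on $(-\infty,0)$; and $\gamma^{\varepsilon}(0)\equiv\gamma_{_{VS}}-\gamma_{_{FS}}$. Moreover $\gamma^{\varepsilon}(t)\uparrow\gamma_{_{FV}}$ as $\varepsilon\downarrow0$ for every $t>0$. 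Hence, fixing $\varepsilon_0>0$, for all $j$ large enough that $\varepsilon_j\leqslant\varepsilon_0$ one has $\gamma^{\varepsilon_j}(h_j)\geqslant\gamma^{\varepsilon_0}(h_j)$ pointwise, so $\liminf_{j}F^{\varepsilon_j}(h_j)\geqslant\liminf_{j}\int_\Omega\gamma^{\varepsilon_0}(h_j)\sqrt{1+|\nabla h_j|^2}\,\mathrm{d}x$. Now the integrand $f(x,t,\xi):=\gamma^{\varepsilon_0}(t)\sqrt{1+|\xi|^2}$ is fixed, Borel, non-negative (it is $\geqslant\gamma_{_{VS}}-\gamma_{_{FS}}>0$), convex in $\xi$, and has recession function $f^{\infty}(x,t,\xi)=\gamma^{\varepsilon_0}(t)\,|\xi|$; its continuity hypothesis in Lemma \ref{relaxation_ieq} is immediate since $\gamma^{\varepsilon_0}\in C^{1}(\mathbb{R})$ is bounded below by $\gamma_{_{VS}}-\gamma_{_{FS}}>0$. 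Applying Lemma \ref{relaxation_ieq} to $h_j\to h$ gives
\[
\liminf_{j\to\infty}F^{\varepsilon_j}(h_j)\ \geqslant\ \int_\Omega\gamma^{\varepsilon_0}(h)\sqrt{1+|\nabla h|^2}\,\mathrm{d}x+\int_\Omega\gamma^{\varepsilon_0}(\tilde h)\,\mathrm{d}|D^{c}h|+\int_{J_h\cap\Omega}\int_{h^{-}(x)}^{h^{+}(x)}\gamma^{\varepsilon_0}(s)\,\mathrm{d}s\,\mathrm{d}\mathcal{H}^{n-1}.
\]

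The last step is to let $\varepsilon_0\downarrow0$ and use monotone convergence in each of the three terms. On $\{h>0\}$ one has $\gamma^{\varepsilon_0}(h)\uparrow\gamma_{_{FV}}$, while on $\{h=0\}$, where $\nabla h=0$ a.e.\ so that $\sqrt{1+|\nabla h|^2}=1$ a.e., the first integrand equals the constant $\gamma_{_{VS}}-\gamma_{_{FS}}=\gamma_{_{FV}}+S$; hence the first term increases to $\gamma_{_{FV}}\int_\Omega\sqrt{1+|\nabla h|^2}\,\mathrm{d}x+S\,\mathcal{L}^n(\{h=0\})$. By standard $BV$ structure theory the Cantor part satisfies $|D^{c}h|(\{\tilde h=0\})=0$, so the second term increases to $\gamma_{_{FV}}|D^{c}h|(\Omega)$; and by the hypothesis that $\{h=0\}$ contains no approximate jump point one has $h^{-}>0$ $\mathcal{H}^{n-1}$-a.e.\ on $J_h$, so the third term increases to $\gamma_{_{FV}}\int_{J_h}(h^{+}-h^{-})\,\mathrm{d}\mathcal{H}^{n-1}$. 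Summing these limits and invoking the decomposition \eqref{area_formula_decompositon} of $\int_\Omega\sqrt{1+|Dh|^2}$ yields
\[
\liminf_{j\to\infty}F^{\varepsilon_j}(h_j)\ \geqslant\ \gamma_{_{FV}}\int_\Omega\sqrt{1+|Dh|^2}+S\,\mathcal{L}^n(\{h=0\})\ \geqslant\ \gamma_{_{FV}}\int_\Omega\sqrt{1+|Dh|^2}+S\,\mathcal{L}^n(\mathcal{N}(h))=F(h),
\]
the second inequality using $S<0$ and $\{h=0\}\subset\mathcal{N}(h)$; here the assumption $\mathcal{H}^{n-1}(\partial\mathcal{N}(h))<\infty$ (hence $\mathcal{L}^n(\partial\mathcal{N}(h))=0$) is what guarantees that $\mathcal{N}(h)$ is the natural closed $\mathcal{L}^n$-version of the zero set and keeps the preceding identities consistent.

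I expect the main obstacle to be exactly this passage $\varepsilon_0\to0$ in the singular terms, i.e.\ proving that no singular mass of $Dh$ sits over the region where $\gamma^{\varepsilon_0}$ remains pinned at $\gamma_{_{VS}}-\gamma_{_{FS}}$: intuitively the film profile must not meet the substrate along a vertical jump (which would force the Young angle to be $\pi/2$, excluded by hypothesis), so that the whole singular part of $Dh$ is ultimately weighted by $\gamma_{_{FV}}$. This is precisely where the two extra hypotheses, $\mathcal{H}^{n-1}(\partial\mathcal{N}(h))<\infty$ and the absence of approximate jump points in $\{h=0\}$, are used; checking the continuity hypothesis of Lemma \ref{relaxation_ieq} for the frozen integrand is routine by comparison.
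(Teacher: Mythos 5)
Your argument is correct, and it takes a genuinely different route from the paper's. The paper uses condition ($3'$) to bound $g(h_j/\varepsilon_j)$ by $g(|h_j|/\varepsilon_j)$, then a layer-cake (Cavalieri) rewriting of $\int(-S)g(|h_j|/\varepsilon_j)\sqrt{1+|\nabla h_j|^2}\,\dx$ with a truncation at the level $\varepsilon_j\log\frac{1}{\varepsilon_j}$, and applies Lemma \ref{relaxation_ieq} three times (with $f=\sqrt{1+\xi^2}$ and with the discontinuous integrands $\chi_{h>0}\sqrt{1+\xi^2}$, $\chi_{h<0}\sqrt{1+\xi^2}$ evaluated on the shifted sequences $\bar h_j$, $\tilde h_j$), recovering the correct weight $\gamma_{_{FV}}$ on the singular parts only after recombining the pieces $I$ and $III$. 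You instead exploit the pointwise monotonicity $\gamma^{\varepsilon_j}(t)\geqslant\gamma^{\varepsilon_0}(t)$ for $\varepsilon_j\leqslant\varepsilon_0$ (a direct consequence of condition (2) and $S<0$, valid for all $t\in\mathbb{R}$), apply the relaxation lemma once per frozen scale $\varepsilon_0$ to the smooth integrand $\gamma^{\varepsilon_0}(t)\sqrt{1+|\xi|^2}$ — whose continuity hypothesis is indeed routine because $\gamma^{\varepsilon_0}$ is continuous and bounded below by $\gamma_{_{VS}}-\gamma_{_{FS}}>0$ — and then let $\varepsilon_0\downarrow0$ by monotone convergence. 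This is cleaner: all three parts of $Dh$ automatically acquire the weight $\gamma_{_{FV}}$, the substrate term $S\,\mathcal{L}^n(\{h=0\})$ falls out of the absolutely continuous part alone, and (as you implicitly show) the ``no approximate jump point in $\{h=0\}$'' hypothesis is not actually needed along this route, since the inner integral $\int_{h^-}^{h^+}\gamma^{\varepsilon_0}(s)\,\mathrm{d}s$ does not see the single value $s=0$ (so your remark that $h^->0$ a.e.\ on $J_h$ is dispensable).

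The only point you should pin down is the assertion $|D^{c}h|(\{\tilde h=0\})=0$. It is true, but it is not the paper's Lemma \ref{cantor_lemma}; the correct reference is the structure property of the diffuse part of $Dh$ (e.g.\ Ambrosio--Fusco--Pallara, Prop.~3.92(c)): $(D^{a}h+D^{c}h)(\tilde h^{-1}(E))=0$ whenever $\mathcal{L}^1(E)=0$, applied with $E=\{0\}$. Alternatively you could reproduce the paper's own argument, which derives $|D^{c}h|(\{h=0\})=0$ from the hypothesis $\mathcal{H}^{n-1}(\partial\mathcal{N}(h))<\infty$ via Lemmas \ref{prop_singular} and \ref{cantor_lemma}; either way the gap is a citation, not a mathematical obstruction.
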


\begin{proof}	
	We only need to prove the result under the assumption that
	$$
	\liminf _{j \rightarrow\infty} F^{\varepsilon_{j}}\left(h_{j}\right)<\infty .
	$$
	Let $\left\{\varepsilon_{j_{k}}\right\}$ be a subsequence of $\left\{\varepsilon_{j}\right\}$ such that
	$$
	\liminf _{j \rightarrow\infty} F^{\varepsilon_{j}}\left(h_{j}\right)=\lim _{k \rightarrow\infty} F^{\varepsilon_{j_{k}}}\left(h_{j_{k}}\right)<\infty .
	$$
	Then $F^{\varepsilon_{j_{k}}}\left(h_{j_{k}}\right)<\infty$ for all $k$ sufficiently large, which implies $h_{j_{k}} \in W^{1,1}\left(\Omega \right)$ and $ \int_\Omega h_{j_k}\mathrm{d}x=V $ for all $k$ sufficiently large. By Theorem \ref{compact2}, we obtain $ h \in BV(\Omega)$, $ \int_\Omega h\mathrm{d}x=V $ and $ h\geqslant0$ for almost every $ x\in\Omega $.
	
	Without loss of generality, we will assume
	$\left\{h_{j}\right\} \subset W^{1,1}\left(\Omega \right)$,  $h \in BV(\Omega )$, $\liminf _{j \rightarrow+\infty} F^{\varepsilon_{j}}\left(h_{j}\right)=\lim _{j \rightarrow+\infty} F^{\varepsilon_{j}}\left(h_{j}\right)<\infty$, $\left\{h_{j}\right\}$ converges to $h$ in $L^{1}\left(\Omega \right)$,  $ \int_\Omega h_{j}\mathrm{d}x=\int_\Omega h\mathrm{d}x=V $, $ h\geqslant0\ a.e. $ in $\Omega$ and $\varepsilon_j<1$.
	
	Since $ g(-z)\geqslant g(z) $ for any $ z\geqslant0 $, we have
	\begin{equation}\label{low1}
		\begin{aligned}
			F^{\varepsilon_j}(h_j)
			&\geqslant\int_{\Omega}\gamma_{_{FV}}\sqrt{1+\left|\nabla h_j\right|^{2}}\mathrm{d}x+\int_{\Omega}(-S) g\left(\frac{|h_j|}{\varepsilon_j}\right)\sqrt{1+\left|\nabla h_j\right|^{2}}\mathrm{d}x
		\end{aligned}
	\end{equation}
	For the second term of (\ref{low1}), by Newton-Leibniz formula and Tonelli theorem,
	\begin{equation}\label{low2}
		\begin{aligned}
			&\int_{\Omega}(-S) g\left(\frac{|h_j|}{\varepsilon_j}\right)\sqrt{1+\left|\nabla h_j\right|^{2}}\mathrm{d}x\\
			=&\int_{\Omega}\bigg(\int_{\{t:0<t<\frac{|h_j(x)|}{\varepsilon_j}\}}(-S)g'(t)\sqrt{1+\left|\nabla h_j\right|^{2}}\mathrm{d}t\bigg)\mathrm{d}x+(-S\cdot g(0))\int_{\Omega}\sqrt{1+\left|\nabla h_j\right|^{2}}\mathrm{d}x\\
			=&\int_0^{\infty}\bigg(\int_{\{x:|h_j(x)|>\varepsilon_j t\}}(-S)g'(t)\sqrt{1+\left|\nabla h_j\right|^{2}}\mathrm{d}x\bigg)\mathrm{d}t+S\int_{\Omega}\sqrt{1+\left|\nabla h_j\right|^{2}}\mathrm{d}x.
		\end{aligned}
	\end{equation}
	Define a set $ B_j=\{x:|h_j(x)|>\varepsilon_j \log\frac{1}{\varepsilon_j}\} $.
	For the first term of (\ref{low2}), since $ g'(t)\geqslant0 $ for $ t\geqslant0 $ and $ S<0 $, we can give its estimate:
	\begin{equation}\label{low3}
		\begin{aligned}
			&\int_0^{\infty}\bigg(\int_{\{x:|h_j(x)|>\varepsilon_j t\}}(-S)g'(t)\sqrt{1+\left|\nabla h_j\right|^{2}}\mathrm{d}x\bigg)\mathrm{d}t\\
			\geqslant&\int_{0}^{\log\frac{1}{\varepsilon_j}}\int_{\{x:|h_j(x)|>\varepsilon_j t\}}(-S)g'(t)\sqrt{1+\left|\nabla h_j\right|^{2}}\mathrm{d}x\mathrm{d}t\\
			\geqslant&\int_0^{\log\frac{1}{\varepsilon_j}}(-S)g'(t)\mathrm{d}t\cdot\int_{B_j}\sqrt{1+\left|\nabla h_j\right|^{2}}\mathrm{d}x\\
			=&-Sg(\log\frac{1}{\varepsilon_j})\int_{B_j}\sqrt{1+\left|\nabla h_j\right|^{2}}\mathrm{d}x-S\int_{B_j}\sqrt{1+\left|\nabla h_j\right|^{2}}\mathrm{d}x
		\end{aligned}
	\end{equation}
	Combining (\ref{low1}), (\ref{low2}) and (\ref{low3}):
	\begin{equation*}
		\begin{aligned}
			F^{\varepsilon_j}(h_j)\geqslant&
			\int_{\Omega}(\gamma_{_{FV}}+S)\sqrt{1+\left|\nabla h_j\right|^{2}}\mathrm{d}x-Sg(\log\frac{1}{\varepsilon_j})\int_{B_j}\sqrt{1+\left|\nabla h_j\right|^{2}}\mathrm{d}x\\&-S\int_{B_j}\sqrt{1+\left|\nabla h_j\right|^{2}}\mathrm{d}x\\
			:=&I+II+III.
		\end{aligned}
	\end{equation*}
	Define an operator:
	\[
	T(h):=\int_{\Omega}\sqrt{1+|\nabla h|^2}\mathrm{d}x+|D^{c} h|(\Omega)+\int_{J_h\cap\Omega}|h^+-h^-|  \mathrm{d} \mathcal{H}^{n-1}.
	\]
	From (\ref{area_formula_decompositon}), we can rewrite $ T(h) $ as:
	\[
	T(h)=\int_\Omega\sqrt{1+|D h|^2}.
	\]
	Let $ f(x,h,\xi)=\sqrt{1+\xi^2} $ which satisfies all the assumptions in Lemma \ref{relaxation_ieq}, then the recession function is $ f^\infty(x,h,\xi)=|\xi| $. By polar decomposition (c.f. Ref. \refcite{ambrosio2000}),
	\[
	    \bigg|\frac{\mathrm{d}D^ch}{\mathrm{d}|D^ch|}\bigg|=1.
	\]
	As a consequence of Lemma \ref{relaxation_ieq}, we have
	\[
	\liminf_{j\to\infty}\int_{\Omega}\sqrt{1+|\nabla h_j|^2}\mathrm{d}x\geqslant T(h).
	\]
	Because $ \gamma_{_{FV}}+S=\gamma_{_{VS}}-\gamma_{_{FS}}>0 $,
	\begin{equation}\label{estimateI}
		\liminf_{j\to\infty}I\geqslant(\gamma_{_{FV}}+S)T(h).
	\end{equation}
	
	Due to the assumption that $\lim _{j \rightarrow\infty} F^{\varepsilon_{j}}\left(h_{j}\right)<+\infty$, there exists a positive constant $ M_0 $ such that $ F^{\varepsilon_{j}}\left(h_{j}\right)\leqslant M_0 $ for sufficiently large $j$.
	Hence,
	\[
	M_0\geqslant F^{\varepsilon_{j}}\left(h_{j}\right)\geqslant(\gamma_{_{VS}}-\gamma_{_{FS}})\int_{\Omega}\sqrt{1+|\nabla h_j|^2}\mathrm{d}x.
	\]
	For $ II $, because $ \gamma_{_{VS}}-\gamma_{_{FS}}>0 $,
	\begin{equation}\label{estimateII}
		|II|\leqslant Sg(\log\frac{1}{\varepsilon_j})\int_{\Omega}\sqrt{1+\left|\nabla h_j\right|^{2}}\mathrm{d}x\leqslant Sg(\log\frac{1}{\varepsilon_j})\frac{M_0}{\gamma_{_{VS}}-\gamma_{_{FS}}}.
	\end{equation}
	Let $\varepsilon_j\to0^+$, we deduce that $ II\rightarrow 0. $
	
	The remaining problem is to estimate III. Let $ f(x,h,\xi)=\chi_{_{h>0}}(x)\sqrt{1+\xi^2} $ in Lemma \ref{relaxation_ieq}, where
	\[
		\chi_{_{h>0}}(x)=\begin{cases}
		1, \quad\quad &\text{if } \, h>0,\\
		0, &\text{if } \, h\leqslant0.
	\end{cases}
	\]
	It can be shown that this function also satisfies all the assumptions in Lemma \ref{relaxation_ieq}. In fact, $ f(x,h,\cdot) $ is a convex function since $ \frac{\partial^2f}{\partial\xi^2}=\chi_{_{h>0}}(x)\frac{1}{(1+\xi^2)^\frac{3}{2}}\geqslant0 $. If $ h_0\leqslant0 $, for all $(x, h) \in \Omega \times \mathbb{R}$, we have
	\[
		f\left(x_{0}, h_{0}, \xi\right)-f(x, h, \xi)=0-f(x, h, \xi)\leqslant0 \leqslant \eta(1+f(x, h, \xi)).
	\]
	If $ h_0>0 $, there exists $\delta>0$ such that $ h>0 $ for all $(x, h) \in \Omega \times \mathbb{R}$ with $\left|x-x_{0}\right|+\left|h-h_{0}\right| \leqslant \delta$. So,
	\[
	f\left(x_{0}, h_{0}, \xi\right)-f(x, h, \xi)=\sqrt{1+\xi^2}-\sqrt{1+\xi^2}=0 \leqslant \eta(1+f(x, h, \xi)).
	\]	
	Now that $f(x,h,\xi)$ satisfies the assumptions in Lemma \ref{relaxation_ieq} with its recession function given by $ f^\infty(x,h,\xi)=\chi_{_{h>0}}(x)|\xi| $. Let $ \bar{h}_j=h_j-\varepsilon_j\log\frac{1}{\varepsilon_j} $, then $ \bar{h}_j\to h $ in $ L^1(\Omega) $. By Lemma \ref{relaxation_ieq}, we have
	\begin{equation}\label{barh_estimate1}
		\begin{aligned}
			&\liminf_{j\to\infty} \int_{\{x:\bar{h}_j(x)>0\}}\sqrt{1+\left|\nabla \bar{h}_j\right|^{2}}\mathrm{d}x\\
			\geqslant&\int_{\{x:h(x)>0\}}\sqrt{1+|\nabla h|^2}\mathrm{d}x+|D^{c} h|(\{x:h(x)>0\})+\int_{J_h\cap\{x:h(x)>0\}}|h^+-h^-|  \mathrm{d} \mathcal{H}^{n-1}.
		\end{aligned}
	\end{equation}
	Similarly, let $ f(x,h,\xi)=\chi_{_{h<0}}(x)\sqrt{1+\xi^2} $ and  $ \tilde{h}_j=h_j+\varepsilon_j\log\frac{1}{\varepsilon_j} $, then
	\begin{equation}\label{barh_estimate2}
		\begin{aligned}
			&\liminf_{j\to\infty}\int_{\{x:\tilde{h}_j(x)<0\}}\sqrt{1+\left|\nabla \tilde{h}_j\right|^{2}}\mathrm{d}x\\
			\geqslant&\int_{\{x:h(x)<0\}}\sqrt{1+|\nabla h|^2}\mathrm{d}x+|D^{c} h|(\{x:h(x)<0\})+\int_{J_h\cap\{x:h(x)<0\}}|h^+-h^-|  \mathrm{d} \mathcal{H}^{n-1}.
		\end{aligned}
	\end{equation}
	Combining \eqref{barh_estimate1} and \eqref{barh_estimate2}, we obtain
	\begin{equation}\label{estimateIII}
		\begin{aligned}
		&\liminf_{j\to\infty}III\\
		\geqslant&\liminf_{j\to\infty}(-S)\int_{\{x:\bar{h}_j(x)>0\}}\sqrt{1+\left|\nabla \bar{h}_j\right|^{2}}\mathrm{d}x+\liminf_{j\to\infty}(-S)\int_{\{x:\tilde{h}_j(x)<0\}}\sqrt{1+\left|\nabla \tilde{h}_j\right|^{2}}\mathrm{d}x\\
		\geqslant&-S\bigg(\int_{\{x:|h(x)|>0\}}\sqrt{1+|\nabla h|^2}\mathrm{d}x+|D^{c} h|(\{x:|h(x)|>0\})\\
		&\quad\quad\quad+\int_{J_h\cap\{x:|h(x)|>0\}}|h^+-h^-|  \mathrm{d} \mathcal{H}^{n-1}\bigg).
		\end{aligned}
	\end{equation}

	For the absolutely continuous part, we have $ \nabla h=0 $ a.e. in $ \{x:h(x)=0\} $, so
	\begin{equation}\label{ieq_ac}
		\int_{\{x:h(x)=0\}}\sqrt{1+|\nabla h|^2}\mathrm{d}x=\mathcal{L}^{n}(\{x:h(x)=0\}).
	\end{equation}

	For the Cantor part, because $ \lim _{R \downarrow 0} R^{-n}|D h|\left(B_{R}(x)\right)=0 $ for any $ x $ in the interior of $ \{x:h(x)=0\} $. From  Lemma \ref{prop_singular},
	\[
	|D^sh|\Big(\{x:h(x)=0\}^\circ\Big)=0,
	\]
	which implies $ |D^ch|\Big(\{x:h(x)=0\}^\circ\Big)=0$.
	
	Moreover, $ |D^ch|\Big(\partial\mathcal{N}(h)\Big)=0 $ as a result of $ \mathcal{H}^{n-1}(\partial \mathcal{N}(h))<\infty $ and Lemma \ref{cantor_lemma}.
	Therefore,
	\begin{equation}\label{ieq_cantor}
		\begin{aligned}
			&		|D^ch|\Big(\{x:h(x)=0\}\Big)=|D^ch|\Big(\partial\{x:h(x)=0\}\Big)\leqslant|D^ch|\Big(\partial\mathcal{N}(h)\Big)=0.
		\end{aligned}
	\end{equation}

	The assumption that there is no approximate jump point in $ \{x:h(x)=0\} $ implies $ J_h\cap\{x\in\Omega:h=0\}=\emptyset $, hence
	\begin{equation}\label{eq_jump}
		\int_{J_h\cap\{x:h(x)=0\}}|h^+-h^-|  \mathrm{~d} \mathcal{H}^{n-1}=0.
	\end{equation}
	
	Combining (\ref{estimateI}), (\ref{estimateII}) (\ref{estimateIII}), (\ref{ieq_ac}), (\ref{ieq_cantor}) and \eqref{eq_jump}, and noticing $S<0$, we arrive at
	\[
	\begin{aligned}
		&\liminf_{j\to\infty} F^{\varepsilon_j}(h_j)\\
		\geqslant&\gamma_{_{FV}}T(h)+S\int_{\{x:h(x)=0\}}\sqrt{1+|\nabla h|^2}\mathrm{d}x+S\cdot|D^ch|\Big(\{x:h(x)=0\}\Big)\\&+S\int_{J_h\cap\{x:h=0\}}|h^+-h^-|  \mathrm{~d} \mathcal{H}^{n-1}\\
		=&\gamma_{_{FV}}T(h)+S\mathcal{L}^{n}(\{x:h(x)=0\})+0+0\\
		\geqslant &F(h).
	\end{aligned}
	\]
\end{proof}

\subsection{The upper estimate}

Let $ \operatorname{Supp}(h):=\overline{\{x:h(x)\neq0\}}$. We first give Lemma \ref{lemma_upper} whose proof is similar to Ref. \refcite{bildhauer03} and is left to \ref{proof4.2}.

\begin{lemma}\label{lemma_upper}
	Let $h \in B V\left(\Omega\right)$ and $ h\geqslant0$ for almost every $ x\in\Omega $. There is a non-negative sequence $\left\{h_{j}\right\}$ in $C^{\infty}\left(\Omega\right)$ satisfying
	$$
	\begin{aligned}
		&\lim _{j \rightarrow \infty} \int_{\Omega}\left|h_{j}-h\right| \mathrm{d}x=0, \\
		&\lim _{j \rightarrow \infty} \int_{\Omega} \sqrt{1+\left|\nabla h_{j}\right|^{2}} \mathrm{d}x=\int_{\Omega} \sqrt{1+|D h|^{2}},\\
		&\lim _{j \rightarrow \infty}\mathcal{L}^n(\operatorname{Supp}(h_j))\leqslant\mathcal{L}^n(\operatorname{Supp}(h)).
	\end{aligned}
	$$
\end{lemma}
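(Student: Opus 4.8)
The plan is to obtain the sequence $\{h_j\}$ by the boundary-adapted mollification of Anzellotti and Giaquinta (in the form used in Ref.~\refcite{bildhauer03}), while keeping careful track of the size of the supports. Fix a scale $\eta_j\downarrow0$, take a locally finite open cover $\{\Omega_i\}_{i\geqslant0}$ of $\Omega$ with $\overline{\Omega_0}\subset\Omega$ and with the $\Omega_i$, $i\geqslant1$, shrinking towards $\partial\Omega$, and let $\{\psi_i\}$ be a subordinate smooth partition of unity. For each $j$ and each $i$ choose a mollification radius $\delta_{i,j}\in(0,\eta_j]$ small enough that $(\psi_i h)\ast\rho_{\delta_{i,j}}$ is still supported in $\Omega_i$ and that the usual telescoping $L^1$- and area-error estimates hold with error $2^{-i}\eta_j$; then set
\[
h_j:=\sum_{i\geqslant0}(\psi_i h)\ast\rho_{\delta_{i,j}}\in C^\infty(\Omega).
\]
Non-negativity of $h_j$ is immediate, since $h\geqslant0$ and every $\rho_{\delta_{i,j}}\geqslant0$. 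The convergence $h_j\to h$ in $L^1(\Omega)$ and the area convergence $\int_\Omega\sqrt{1+|\nabla h_j|^2}\,\mathrm{d}x\to\int_\Omega\sqrt{1+|Dh|^2}$ are exactly the content of the classical smoothing theorem for the area functional; I would reproduce this step following the argument of Ref.~\refcite{bildhauer03}, using lower semicontinuity of the area for one inequality and the construction above for the reverse one.

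It then remains to control the supports. Since $\psi_i h$ vanishes outside $\operatorname{Supp}(h)$ and convolution enlarges a support by at most the mollifier radius, we have
\[
\operatorname{Supp}\big((\psi_i h)\ast\rho_{\delta_{i,j}}\big)\subset\{x:\operatorname{dist}(x,\operatorname{Supp}(h))\leqslant\delta_{i,j}\}\subset S_{\eta_j},
\]
where $S_{\eta_j}:=\{x\in\Omega:\operatorname{dist}(x,\operatorname{Supp}(h))\leqslant\eta_j\}$, and hence $\operatorname{Supp}(h_j)\subset S_{\eta_j}$. Because $\operatorname{Supp}(h)$ is relatively closed in $\Omega$ and $\Omega$ is bounded, the nested sets $S_{\eta_j}$ decrease, as $\eta_j\downarrow0$, to $\operatorname{Supp}(h)$ up to an $\mathcal{L}^n$-null subset of $\partial\Omega$; continuity from above of $\mathcal{L}^n$ (legitimate since $\mathcal{L}^n(S_{\eta_1})\leqslant\mathcal{L}^n(\Omega)<\infty$) gives $\mathcal{L}^n(S_{\eta_j})\downarrow\mathcal{L}^n(\operatorname{Supp}(h))$, so that $\limsup_{j\to\infty}\mathcal{L}^n(\operatorname{Supp}(h_j))\leqslant\mathcal{L}^n(\operatorname{Supp}(h))$. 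Passing to a subsequence along which $\mathcal{L}^n(\operatorname{Supp}(h_j))$ converges (the other two limits are unaffected) yields the statement in the $\lim$ form.

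The main obstacle is not conceptual but lies in the bookkeeping of the first step: one must check that the radii $\delta_{i,j}$ needed to make the Anzellotti--Giaquinta area estimate work really can be taken uniformly small in $i$, i.e.\ all $\leqslant\eta_j$, so that the support bound $\operatorname{Supp}(h_j)\subset S_{\eta_j}$ is genuinely available. This is automatic for the pieces near $\partial\Omega$, where the constraint of staying inside $\Omega$ already forces $\delta_{i,j}\to0$, but for the finitely many interior pieces it must be imposed by hand — which is precisely the point where the argument of Ref.~\refcite{bildhauer03} has to be tracked rather than quoted. Everything else (non-negativity, the support estimate, and the subsequence argument) is routine.
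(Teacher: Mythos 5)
Your proposal is correct and follows essentially the same route as the paper's Appendix B: both take the Anzellotti--Giaquinta/partition-of-unity mollification $h_j=\sum_i(\varphi_i h)\ast\rho_{\delta_{i,j}}$ from Ref.~\refcite{bildhauer03}, get non-negativity from positivity of the mollifier, and bound $\operatorname{Supp}(h_j)$ inside a shrinking neighborhood of $\operatorname{Supp}(h)$ using the fact that convolution enlarges supports by at most the mollification radius. Your explicit uniform bound $\delta_{i,j}\leqslant\eta_j$ and the passage to a convergent subsequence are slightly more careful than the paper's phrasing, but the argument is the same.
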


\begin{theorem}{(The upper estimate)}\label{upper estimate}
	Let $ \Omega \subset \mathbb{R}^{n}$ be an open bounded set with Lipschitz boundary and the Young's angle $\theta_e\in(0,\frac{\pi}{2})$. Assume $g$ satisfies the conditions (1) and (2). For any $h \in L^1(\Omega )$ satisfying $ \mathcal{H}^{n-1}(\partial \mathcal{N}(h))<\infty $, there exists a sequence $ \{h_j\} $, such that $ h_j\to h $ in $ L^1(\Omega) $, $\int_\Omega h_j\mathrm{d}x=V$ and
	\[
	\limsup_{j\to\infty} F^{\varepsilon_j}(h_j)\leqslant F(h).
	\]
\end{theorem}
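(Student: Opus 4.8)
The plan is to reduce to the nontrivial case $F(h)<\infty$, so that $h\in BV(\Omega)$, $h\geqslant0$ a.e. and $\int_\Omega h\,\mathrm{d}x=V$ (when $F(h)=+\infty$ the asserted inequality is trivially met by, e.g., a volume‑corrected mollification). For such $h$ I would take as recovery sequence the smooth functions $\{h_j\}\subset C^\infty(\Omega)$, $h_j\geqslant0$, provided by Lemma \ref{lemma_upper}, which satisfy $h_j\to h$ in $L^1(\Omega)$, $\int_\Omega\sqrt{1+|\nabla h_j|^2}\,\mathrm{d}x\to\int_\Omega\sqrt{1+|Dh|^2}$ and $\limsup_j\mathcal{L}^n(\operatorname{Supp}(h_j))\leqslant\mathcal{L}^n(\operatorname{Supp}(h))$, and then restore the volume constraint by the harmless rescaling $\tilde h_j:=\frac{V}{\int_\Omega h_j\,\mathrm{d}x}\,h_j$ (well defined for $j$ large since $\int_\Omega h_j\to V>0$). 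Rescaling by a positive constant tending to $1$ preserves non‑negativity, the zero set, the closure $\operatorname{Supp}$, the $L^1$‑limit and, because $\int_\Omega|\nabla h_j|\,\mathrm{d}x\leqslant\int_\Omega\sqrt{1+|\nabla h_j|^2}\,\mathrm{d}x$ stays bounded, also the perimeter limit; and $\int_\Omega\tilde h_j\,\mathrm{d}x=V$.

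Next I would split $F^{\varepsilon_j}(\tilde h_j)=\gamma_{_{FV}}\int_\Omega\sqrt{1+|\nabla\tilde h_j|^2}\,\mathrm{d}x-S\int_\Omega g\!\big(\tilde h_j/\varepsilon_j\big)\sqrt{1+|\nabla\tilde h_j|^2}\,\mathrm{d}x$. The first term converges to $\gamma_{_{FV}}\int_\Omega\sqrt{1+|Dh|^2}$ by the above. For the second term only conditions (1)–(2) are needed: they force $g\leqslant0$ on $[0,\infty)$ with $g(0)=-1$, so since $\sqrt{1+|\nabla\tilde h_j|^2}\geqslant1$ and $\tilde h_j\geqslant0$, one gets $\int_\Omega g(\tilde h_j/\varepsilon_j)\sqrt{1+|\nabla\tilde h_j|^2}\,\mathrm{d}x\leqslant-\mathcal{L}^n(\{\tilde h_j=0\})$, and multiplying by $-S>0$ (recall $S<0$) yields $-S\int_\Omega g(\tilde h_j/\varepsilon_j)\sqrt{1+|\nabla\tilde h_j|^2}\,\mathrm{d}x\leqslant S\,\mathcal{L}^n(\{\tilde h_j=0\})$. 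Crucially this upper bound no longer involves $\varepsilon_j$, so no diagonal extraction over $\varepsilon_j$ is required; taking $\limsup_j$ and using $S<0$ gives $\limsup_j F^{\varepsilon_j}(\tilde h_j)\leqslant\gamma_{_{FV}}\int_\Omega\sqrt{1+|Dh|^2}+S\,\liminf_j\mathcal{L}^n(\{\tilde h_j=0\})$.

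It then remains to show $\liminf_j\mathcal{L}^n(\{\tilde h_j=0\})\geqslant\mathcal{L}^n(\mathcal{N}(h))$. Since $\{\tilde h_j=0\}=\Omega\setminus\{\tilde h_j\neq0\}\supseteq\Omega\setminus\operatorname{Supp}(\tilde h_j)$ and $\limsup_j\mathcal{L}^n(\operatorname{Supp}(\tilde h_j))\leqslant\mathcal{L}^n(\operatorname{Supp}(h))$, we obtain $\liminf_j\mathcal{L}^n(\{\tilde h_j=0\})\geqslant\mathcal{L}^n(\Omega)-\mathcal{L}^n(\operatorname{Supp}(h))$, so the proof closes once the measure‑bookkeeping identity $\mathcal{L}^n(\operatorname{Supp}(h))+\mathcal{L}^n(\mathcal{N}(h))=\mathcal{L}^n(\Omega)$ — equivalently $\mathcal{L}^n\big(\overline{\{h\neq0\}}\cap\overline{\{h=0\}}\big)=0$ — is established. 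One inclusion is immediate: $\Omega\setminus\operatorname{Supp}(h)$ is open and contained in $\{h=0\}$, hence in $\mathcal{N}(h)^\circ$, giving $\mathcal{L}^n(\mathcal{N}(h))\geqslant\mathcal{L}^n(\Omega)-\mathcal{L}^n(\operatorname{Supp}(h))$; for the reverse one notes $\overline{\{h=0\}}\cap\overline{\{h\neq0\}}\subseteq\partial\{h=0\}$ and argues that under $\mathcal{H}^{n-1}(\partial\mathcal{N}(h))<\infty$ this boundary is $\mathcal{L}^n$‑negligible. Combining the three displays gives $\limsup_j F^{\varepsilon_j}(\tilde h_j)\leqslant\gamma_{_{FV}}\int_\Omega\sqrt{1+|Dh|^2}+S\,\mathcal{L}^n(\mathcal{N}(h))=F(h)$.

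The main obstacle is precisely this last measure‑theoretic point: excluding a positive‑measure set of points that lie simultaneously in $\overline{\{h=0\}}$ and $\overline{\{h\neq0\}}$ while $\partial\mathcal{N}(h)$ remains $\mathcal{H}^{n-1}$‑finite. This requires working with a good (precise) $BV$ representative of $h$ and exploiting that a finite $\mathcal{H}^{n-1}$‑perimeter of $\mathcal{N}(h)$ rules out "porous" zero sets; dimensionally, $\mathcal{H}^{n-1}(E)<\infty$ already forces $\mathcal{L}^n(E)=0$, which is the mechanism behind the claim. Everything else is soft: the $\gamma_{_{FV}}$‑part is handled verbatim by Lemma \ref{lemma_upper}, the sign condition $g\leqslant0$ makes the regularizing term cheap to bound from above uniformly in $\varepsilon$ (so condition $(3')$ plays no role here), and the volume constraint is restored by rescaling.
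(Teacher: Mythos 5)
Your proposal is correct and follows the same architecture as the paper's proof: reduce to $F(h)<\infty$, take the smooth non-negative approximants from Lemma \ref{lemma_upper}, split the energy into the area term and the $g$-term, bound the latter by $S\,\mathcal{L}^n(\{\tilde h_j=0\})$ uniformly in $\varepsilon_j$, and close with the measure identity relating $\operatorname{Supp}(h)$ and $\mathcal{N}(h)$. Two of your technical choices differ from the paper's and are arguably cleaner. First, you restore the volume by the multiplicative rescaling $c_j h_j$ with $c_j\to1$, which preserves non-negativity and the zero set exactly; the paper instead adds the constant $d_j/\mathcal{L}^n(\Omega)$, which can push the function slightly below zero and forces a mean-value estimate $|g'(\zeta)|\leqslant M_1$ on $(-\max_j\varepsilon_j/\mathcal{L}^n(\Omega),+\infty)$ — a bound the paper justifies via $\lim_{z\to+\infty}g'(z)=0$, a property not actually contained in the hypotheses (1)--(2) of the theorem. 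Your route avoids needing any control on $g'$ at all; the only thing to check is that the area integrals are stable under the rescaling, which you correctly reduce to the boundedness of $\int_\Omega|\nabla h_j|\,\mathrm{d}x$. Second, you bound the $g$-term by the pointwise observation $g\leqslant0$, $g(0)=-1$, $\sqrt{1+|\nabla h_j|^2}\geqslant1$, whereas the paper runs a layer-cake (Newton--Leibniz plus Tonelli) computation to reach the same estimate $S\,\mathcal{L}^n(\Omega)-S\,\mathcal{L}^n(\operatorname{Supp}\tilde h_j)$. Finally, the step you single out as the main obstacle — that $\mathcal{L}^n\bigl(\overline{\{h=0\}}\cap\overline{\{h\neq0\}}\bigr)=0$, i.e.\ $\mathcal{L}^n(\Omega\setminus\operatorname{Supp}(h))=\mathcal{L}^n(\mathcal{N}(h))$ — is exactly the point the paper dispatches in one line by invoking $\mathcal{L}^n(\partial\mathcal{N}(h))=0$; note that the set in question is $\partial\{h=0\}$, which contains but need not equal $\partial\mathcal{N}(h)=\partial\overline{\{h=0\}}$, so your caution here is warranted (the paper's justification is no more complete than yours on this point).
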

\begin{proof}
    We only need to consider the case $F(h)<+\infty$ which means we have $h \in B V(\Omega )$ satisfying $ \int_{\Omega}h\mathrm{d}x=V $ and $ h\geqslant0 \ a.e.$ in $\Omega$. By Lemma \ref{lemma_upper}, for any $ \varepsilon_j\to0^+ $, there exists a non-negative sequence $ \{\tilde{h}_j\} $ in $C^\infty(\Omega)$, such that
	\begin{equation}\label{upper_lengthestimate}
		\begin{aligned}
			&\int_\Omega|\tilde{h}_j-h|\mathrm{d}x\leqslant\varepsilon_j^2,\\
			&\lim _{j \rightarrow \infty} \int_{\Omega} \sqrt{1+\left|\nabla \tilde{h}_{j}\right|^{2}} \mathrm{d}x=\int_{\Omega} \sqrt{1+|D h|^{2}},\\
			&\lim _{j \rightarrow \infty}\mathcal{L}^n(\operatorname{Supp}(\tilde{h}_j))\leqslant\mathcal{L}^n(\operatorname{Supp}(h)).
		\end{aligned}
	\end{equation}

	Because $ S<0 $ and $ g\left(\frac{\tilde{h}_j}{\varepsilon_j}\right)\leqslant0 $, we have
	\[
	\int_{\Omega}(-S) g\left(\frac{\tilde{h}_j}{\varepsilon_j}\right)\sqrt{1+\left|\nabla \tilde{h}_j\right|^{2}}\mathrm{d}x\leqslant\int_{\Omega}(-S) g\left(\frac{\tilde{h}_j}{\varepsilon_j}\right)\mathrm{d}x.
	\]
	Since $ g'(t)\geqslant0 $ for $ t\geqslant0 $, by Newton-Leibniz formula and Tonelli theorem, we have
	\begin{equation}\label{upper_estimate1}
		\begin{aligned}
			&\int_{\Omega}(-S) g\left(\frac{\tilde{h}_j}{\varepsilon_j}\right)\mathrm{d}x\\
			=&\int_{\Omega}\bigg(\int_{\{t:0<t<\frac{\tilde{h}_j(x)}{\varepsilon_j}\}}(-S)g'(t)\mathrm{d}t\bigg)\mathrm{d}x+(-Sg(0))\mathcal{L}^n(\Omega)\\
			=&\int_{0}^{\infty}\bigg((-S)g'(t)\cdot\mathcal{L}^n(\{x\in\Omega:\frac{\tilde{h}_j(x)}{\varepsilon_j}>t\})\bigg)\mathrm{d}t+S\cdot \mathcal{L}^n(\Omega)\\
			\leqslant&\mathcal{L}^n\bigg(\{x\in\Omega:\frac{\tilde{h}_j(x)}{\varepsilon_j}>0\}\bigg)\int_{0}^{\infty}(-S)g'(t)\mathrm{d}t+S\cdot \mathcal{L}^n(\Omega)\\
			=&-S\cdot \mathcal{L}^n\bigg(\{x\in\Omega:\tilde{h}_j(x)>0\}\bigg)+S\cdot \mathcal{L}^n(\Omega)\\
			\leqslant&-S\mathcal{L}^n(\operatorname{Supp}(\tilde{h}_j))+S\cdot \mathcal{L}^n(\Omega).
		\end{aligned}
	\end{equation}
	
	Define the extended functional of $F^\varepsilon$ on $W^{1,1}(\Omega)$ as
    \[
		G^{\varepsilon}(h)=\int_{\Omega}\bigg(\gamma_{_{FV}}-S g\left(\frac{h}{\varepsilon}\right)\bigg)\sqrt{1+\left|\nabla h\right|^{2}}\mathrm{d}x.
	\]
	Then $G^{\varepsilon}$ coincides with $F^{\varepsilon}$ on the set $\{h\in W^{1,1}(\Omega): \int_\Omega h\mathrm{d}x=V\}$.

	Combining (\ref{upper_lengthestimate}) and (\ref{upper_estimate1}), we obtain
	\[	
	\begin{aligned}
		\limsup_{j \rightarrow \infty}G^{\varepsilon_j}(\tilde{h}_j)&=\limsup_{j \rightarrow \infty}\int_{\Omega}\bigg(\gamma_{_{FV}}-S g\big(\frac{\tilde{h}_j}{\varepsilon_j}\big)\bigg)\sqrt{1+\left|\nabla \tilde{h}_j\right|^{2}}\mathrm{d}x\\
		&\leqslant\gamma_{_{FV}}\lim_{j \rightarrow \infty}\int_{\Omega}\sqrt{1+\left|\nabla \tilde{h}_j\right|^{2}}\mathrm{d}x-S\lim_{j \rightarrow \infty} \mathcal{L}^n(\operatorname{Supp}(\tilde{h}_j))+S\cdot \mathcal{L}^n(\Omega)\\
		&\leqslant\gamma_{_{FV}}\int_{\Omega} \sqrt{1+|D h|^{2}}-S\cdot\mathcal{L}^n(\operatorname{Supp}(h))+S\cdot \mathcal{L}^n(\Omega)\\
		&=\gamma_{_{FV}}\int_{\Omega} \sqrt{1+|D h|^{2}}+S\cdot\mathcal{L}^n(\Omega\backslash\operatorname{Supp}(h))\\
		&= \gamma_{_{FV}}\int_{\Omega} \sqrt{1+|D h|^{2}}+S\cdot\mathcal{L}^n(\mathcal{N}(h))\\
		&=F(h).
	\end{aligned}
	\]
    where in the second to last equality we have used the fact $ \mathcal{L}^n(\partial \mathcal{N}(h))=0 $, which is a result of $ \mathcal{H}^{n-1}(\partial \mathcal{N}(h))<\infty $.
	
	To obtain a recovery sequence for the original functional $F^\varepsilon$ in the upper estimate, we only need to modify $ \tilde{h}_j $ to obtain $ h_j\in W^{1,1}(\Omega) $ such that $ h_j\to h $ in $ L^1(\Omega) $, $ \int_\Omega h_j\mathrm{d}x=V $ and $ \lim\limits_{j \rightarrow \infty}G^{\varepsilon_j}(\tilde{h}_j)=\lim\limits_{j \rightarrow \infty}F^{\varepsilon_j}(h_j) $.
	
	Let $ d_j=\int_\Omega h\mathrm{d}x-\int_\Omega \tilde{h}_j\mathrm{d}x$. Then \eqref{upper_lengthestimate} implies
	\begin{equation}\label{d_estimate}
		|d_j|\leqslant\int_\Omega |h-\tilde{h}_j|\mathrm{d}x\leqslant\varepsilon_j^2.
	\end{equation}
	Define $ h_j:=\tilde{h}_j+\frac{d_j}{\mathcal{L}^n(\Omega)} \in W^{1,1}(\Omega)$. We have $ h_j\to h $ in $ L^1(\Omega) $,  $ \int_\Omega h_j=\int_\Omega h\mathrm{d}x=V $, $ \nabla h_j=\nabla \tilde{h}_j $, and
	\begin{equation}\label{upper_last}
	\begin{aligned}
		&\limsup\limits_{j\to\infty}|G^{\varepsilon_j}(\tilde{h}_j)-F^{\varepsilon_j}(h_j)|\\
		=&\limsup\limits_{j\to\infty}(-S)\bigg|\int_\Omega\bigg(g\big(\frac{\tilde{h}_j}{\varepsilon_j}\big)-g\big(\frac{h_j}{\varepsilon_j}\big)\bigg)\sqrt{1+|\nabla\tilde{h}_j|^2}\mathrm{d}x\bigg|\\
		\leqslant&\limsup\limits_{j\to\infty}(-S)\int_\Omega |g'(\zeta)|\frac{|d_j|}{\varepsilon_j\mathcal{L}^n(\Omega)}\sqrt{1+|\nabla\tilde{h}_j|^2}\mathrm{d}x,
	\end{aligned}
	\end{equation}
	where $ \zeta $ is between $ \frac{\tilde{h}_j}{\varepsilon_j} $ and $ \frac{h_j}{\varepsilon_j} $. From \eqref{d_estimate}, we know
	\begin{equation}\label{d2_estimate}
		\frac{|d_j|}{\varepsilon_j\mathcal{L}^n(\Omega)}\leqslant \frac{\varepsilon_j}{\mathcal{L}^n(\Omega)}.
	\end{equation}
	Since $ \tilde{h}_j\geqslant0 $ and \eqref{d2_estimate}, we know $ \frac{\tilde{h}_j}{\varepsilon_j}\in [0,+\infty) $ and
	\[
	\frac{h_j}{\varepsilon_j}=\frac{\tilde{h}_j}{\varepsilon_j}+\frac{d_j}{\varepsilon_j\mathcal{L}^n(\Omega)}\in[-\frac{\varepsilon_j}{\mathcal{L}^n(\Omega)},+\infty).
	\]
	Then we have $ \zeta\in(-\frac{\max_j\varepsilon_j}{\mathcal{L}^n(\Omega)},+\infty) $. Since $ g'(z) $ is continuous and $ \lim\limits_{z\to+\infty}g'(z)=0 $, there exists a constant $ M_1 $ such that $ |g'(\zeta)|\leqslant M_1 $.
	
	Since $ \lim\limits_{j\to+\infty}\int_\Omega\sqrt{1+|\nabla\tilde{h}_j|^2}\mathrm{d}x=\int_\Omega\sqrt{1+|Dh|^2}\mathrm{d}x<\infty $, 
    \eqref{upper_last} becomes
	\[
	\begin{aligned}
	\limsup\limits_{j\to\infty}|G^{\varepsilon_j}(\tilde{h}_j)-F^{\varepsilon_j}(h_j)|
	\leqslant \lim\limits_{j\to\infty}(-S)M_1\frac{\varepsilon_j}{\mathcal{L}^n(\Omega)}\int_\Omega\sqrt{1+|\nabla\tilde{h}_j|^2}\mathrm{d}x=0,
	\end{aligned}
	\]
	which leads to $ \limsup\limits_{j\to\infty}G^{\varepsilon_j}(\tilde{h}_j)=\limsup\limits_{j\to\infty}F^{\varepsilon_j}(h_j) $. Therefore, $ \{h_j\} $ is a recovery sequence and the upper estimate follows.
\end{proof}

In summary, combining Theorem \ref{lower estimate} and Theorem \ref{upper estimate}, we complete the proof of the Theorem \ref{main_theorem}.

\section{Numerical results}

In this section, we present some numerical experiments to verify the theoretical results (\ref{eq:const-curv}), (\ref{eq:height}) and (\ref{eq:inner}) from the asymptotic analysis in Section 3. For simplicity, we only perform numerical simulations when $\Omega$ is a one-dimensional region.

Let $\Omega=[-1,1]$ be a computational domain. We divide the domain into $N$ equal subintervals $[x_j,x_{j+1}]$,
i.e., $ x_j = -1 + j\Delta x $ and $ h_j=h(x_j) $, $j=0,1,2,\ldots,N$, with $\Delta x=\frac{2}{N}$. The Euler-Lagrange equation \eqref{eq:equilib} is discretized using the central difference scheme, and the volume constraint $ \int_\Omega h(x)\mathrm{d}x=V $ is discretized by using Simpson's formula. Then, we obtain a nonlinear system
$ G(h_1,h_2,\cdots,h_{N+1},\lambda)=0 $. We use the modified Newton's method to solve it numerically. More precisely, we employ the traditional Newton's method combining backtracking line search with Armijo rule.\cite{andrei2006acceleration} The Armijo parameter is chosen as $ 10^{-4} $ and the backtrack coefficient is fixed at 0.5. To avoid the step size being too small, we set its lower bound to be $10^{-4}$. The stopping criterion of the algorithm is $ ||G||_\infty<10^{-8}$ (if the stopping criterion is changed to $||h^{(k+1)}-h^{(k)}||_{\infty}<10^{-8}$, we obtain almost the same results). We note that each Newton's step can be solved very efficiently due to the sparsity of the Jacobian matrix of $G$ (which is close to a tridiagonal matrix).

The initial shape is given by
\[
h(x)=
\begin{cases}
	\sqrt{0.41-x^2}-0.4,\quad & \text{if}~ -0.5\leqslant x\leqslant0.5;\\
	0,&\text{otherwise}.
\end{cases}
\]
Choose $\gamma_{_{FV}}=1$, then $S=\cos\theta_e-1$, where $\theta_e\in (0,\pi/2)$ is Young's contact angle. We take $g(z)=e^{-z}-2e^{-\frac{z}2}$,
and consider two different Young's angle $\theta_e={\pi}/{3}$ and ${\pi}/{6}$ in the simulations. Furthermore, we choose a very fine mesh $N=2048$ in all the numerical simulations in order to make numerical errors as small as possible.

Fig.~\ref{eps} shows the relation between the (numerical) equilibrium shapes produced by the regularized model and the equilibrium shape (shown in red solid line)
of the original model determined by the Winterbottom construction by decreasing the regularization parameters $\varepsilon$ under two different Young's angels $\theta_e={\pi}/{3}, {\pi}/{6}$, respectively. As shown in the figure, we can clearly observe the convergence results of the regularized model when we gradually decrease $\varepsilon$ from $0.01$ to $0.0025$. Furthermore, Fig.~\ref{curvature} also depicts the convergence results of the curvature along the equilibrium curve produced from the regularized model to the original model, when we decrease $\varepsilon$. From the figure, we observe that the curvature function along the curve gradually becomes a constant,
especially for the position which is away from the contact point. This is consistent with the constant curvature of equilibrium shapes by the minimal surface theory, and also agrees with the asymptotic analysis result \eqref{eq:const-curv}.

\begin{figure}[htpb]
	\centering
	\subfloat{\includegraphics[width=10cm]{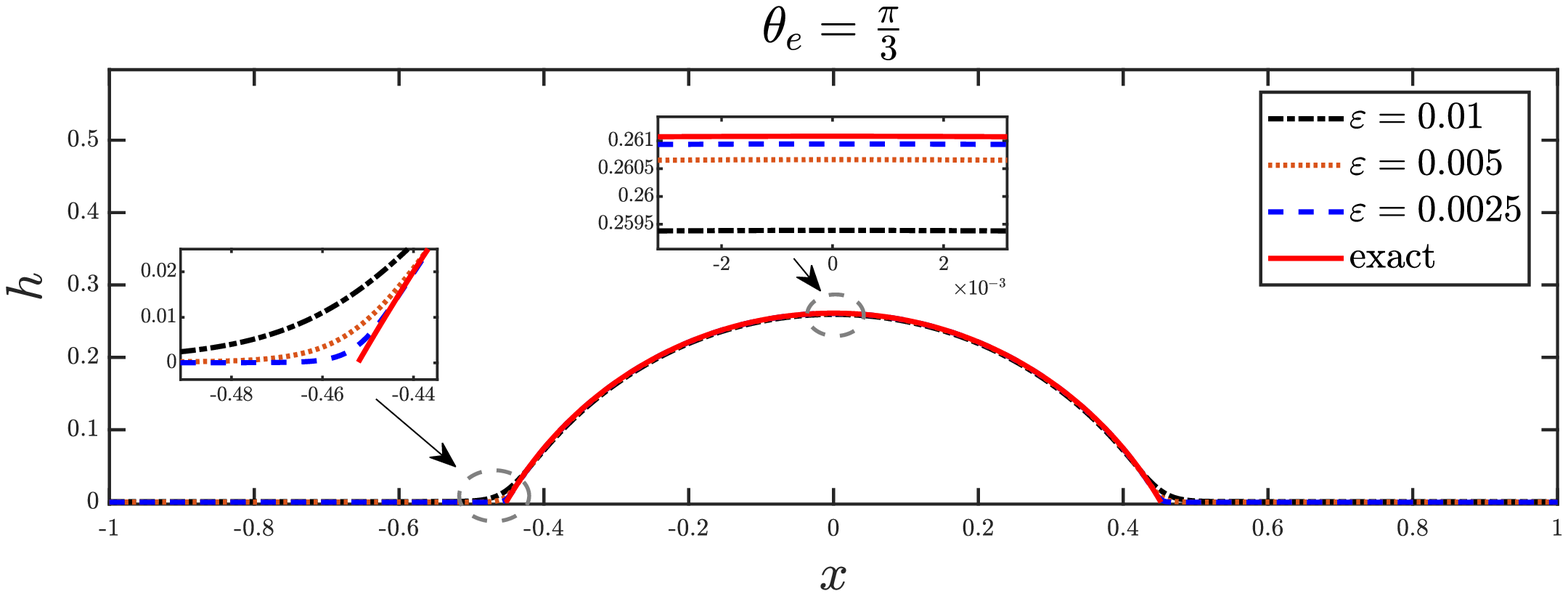}}\\
	\subfloat{\includegraphics[width=10cm]{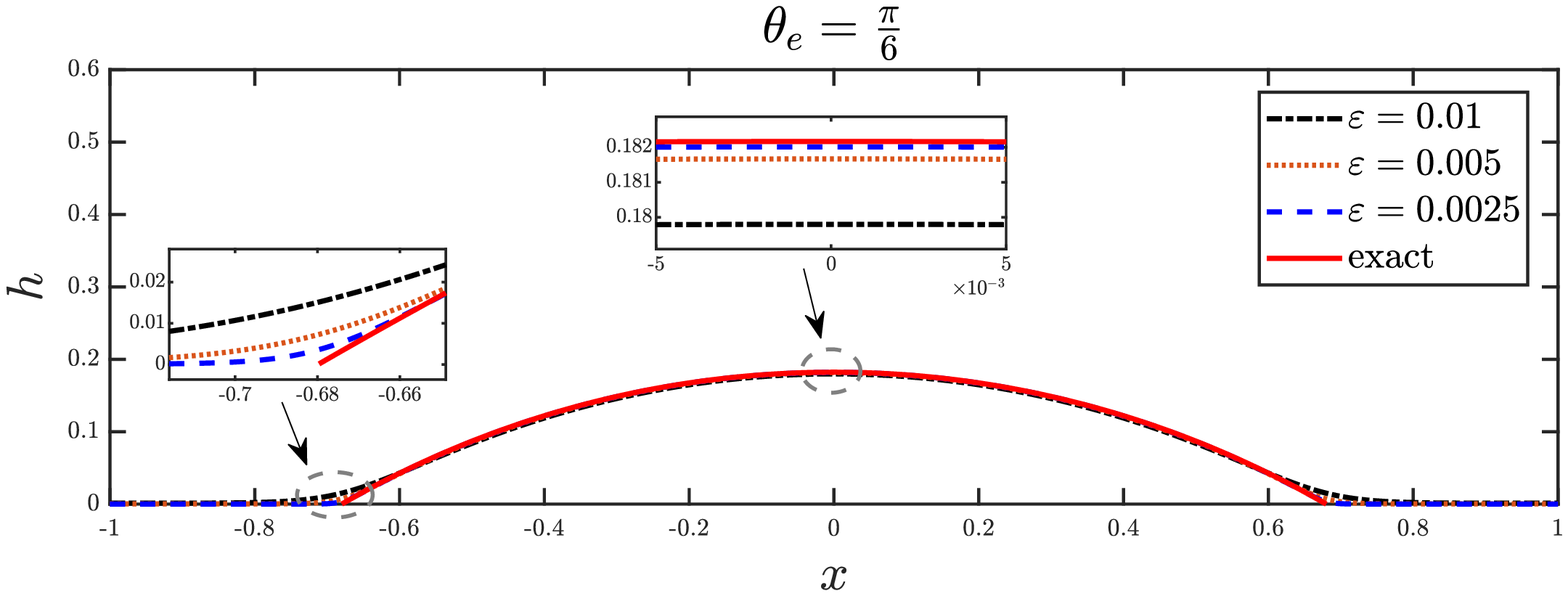}}	
	\caption{\label{eps}Convergence results of equilibrium shapes produced from the regularized model to the original model (in red solid line) by decreasing the regularization parameters $\varepsilon$ under two different Young's angels: $\theta_e={\pi}/{3}$ (top) and $\theta_e={\pi}/{6}$ (bottom), where the zoom-in plots are also shown for a better observation.}
\end{figure}

\begin{figure}[htb]
	\centering
	\subfloat{\includegraphics[width=6cm]{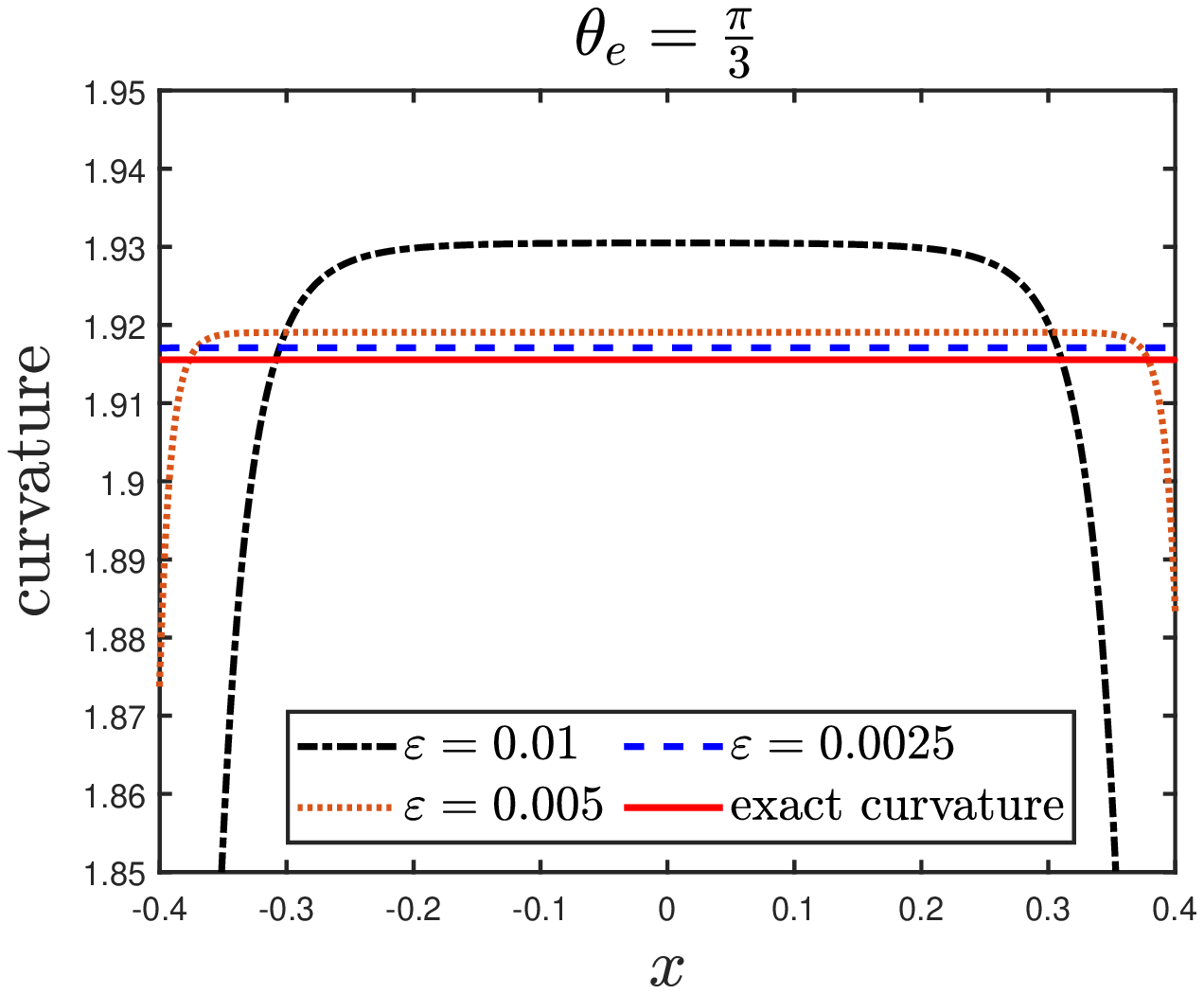}}
	\subfloat{\includegraphics[width=6cm]{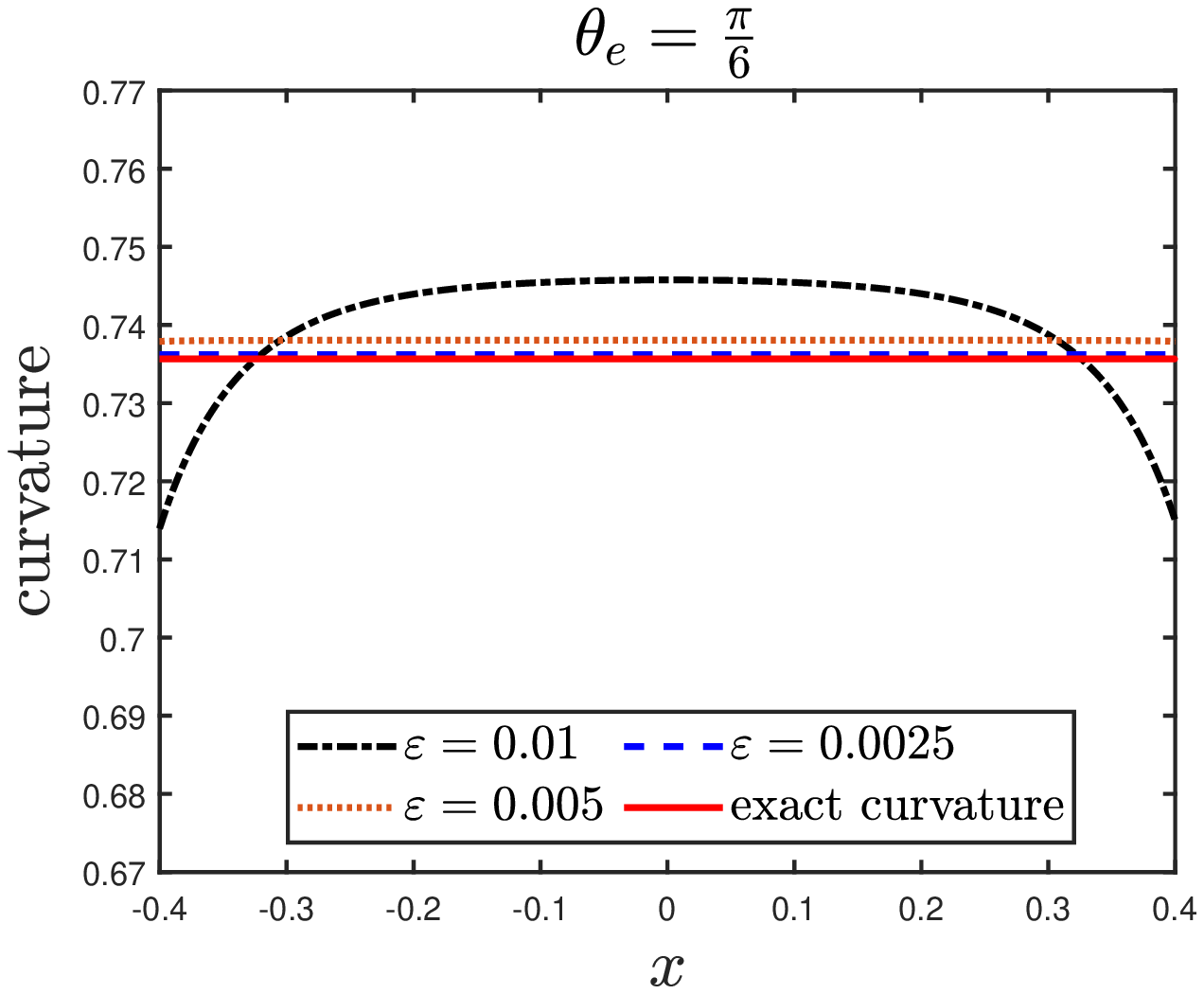}}
	\caption{\label{curvature} Convergence results of the curvature along the equilibrium curve produced from the regularized model to the original model (in red solid line) by decreasing $\varepsilon$ under two different Young's angels: $\theta_e={\pi}/{3}$ (left) and $\theta_e={\pi}/{6}$ (right).}
\end{figure}

In the regularized model, the appearance of the precursor layer has brought lots of advantages for theoretical analysis and numerical simulations, because it transforms the original moving boundary problem into a fixed domain problem. Therefore, the relation between the height of the precursor layer and the small regularization parameter $\varepsilon$ is very essential. Both the asymptotic analysis and numerical results indicate that there exists a constant height of the precursor layer which is away from the contact
point. Thus, the height of the precursor can be numerically calculated at an appropriate point (e.g., we choose the value of $h(x)$ at the point $x=-1$ in the numerical simulations). Fig.~\ref{heightlog}(a) depicts the log-log plot of the precursor height as a function of $\varepsilon$ when Young's angle $\theta_e={\pi}/{3}$.
As clearly shown by the figure, we can see that the precursor height decreases to zero at the second-order rate as $\varepsilon$ goes to zero. Furthermore,
Fig.~\ref{heightlog}(b) clearly shows the coefficient of the second-order rate can be perfectly predicted by the formula \eqref{eq:height}, which is given
by our asymptotic analysis in Section 3.

\begin{figure}[htb]
	\centering
    \subfloat{\includegraphics[width=6cm]{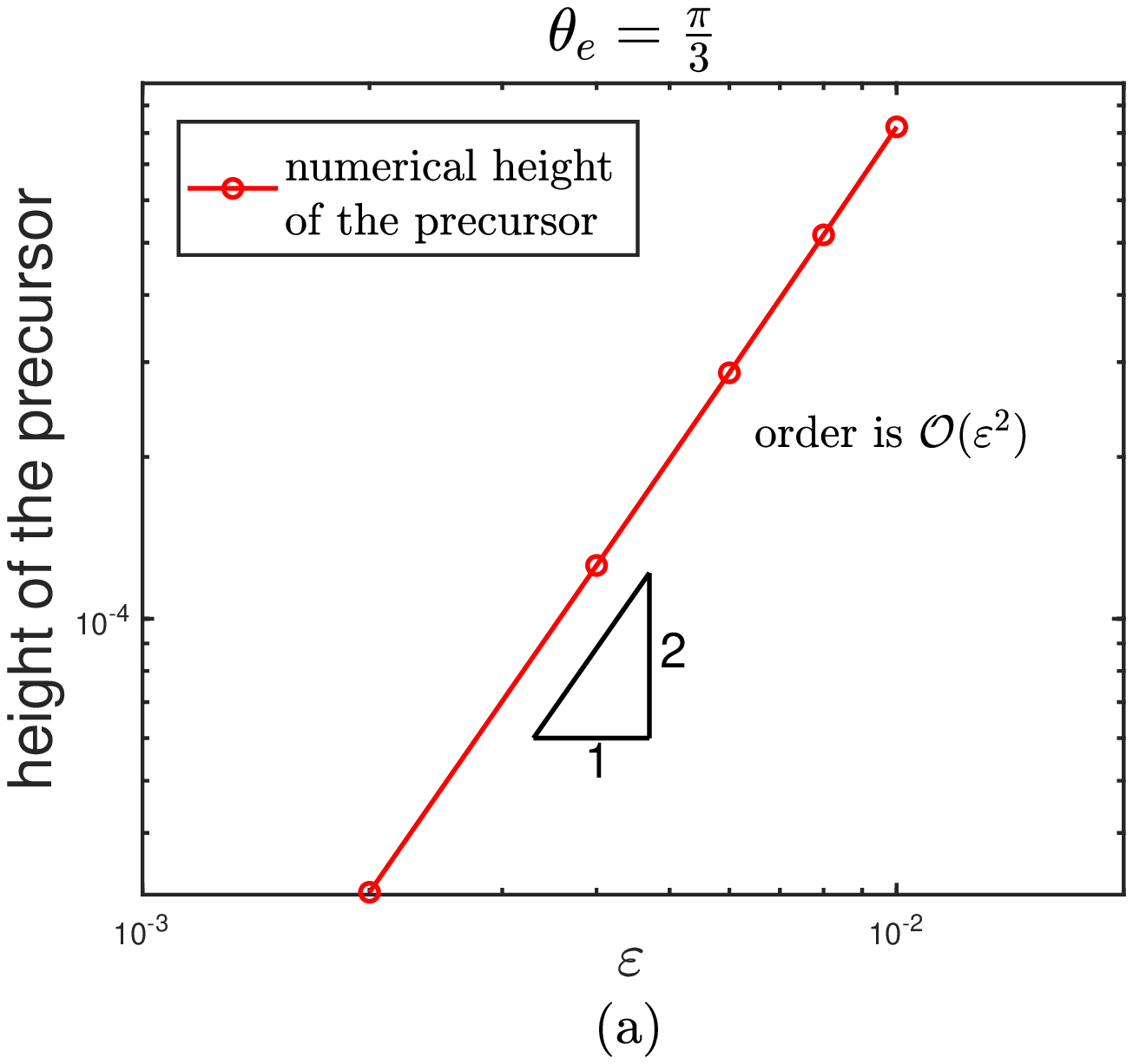}}
	\subfloat{\includegraphics[width=6cm]{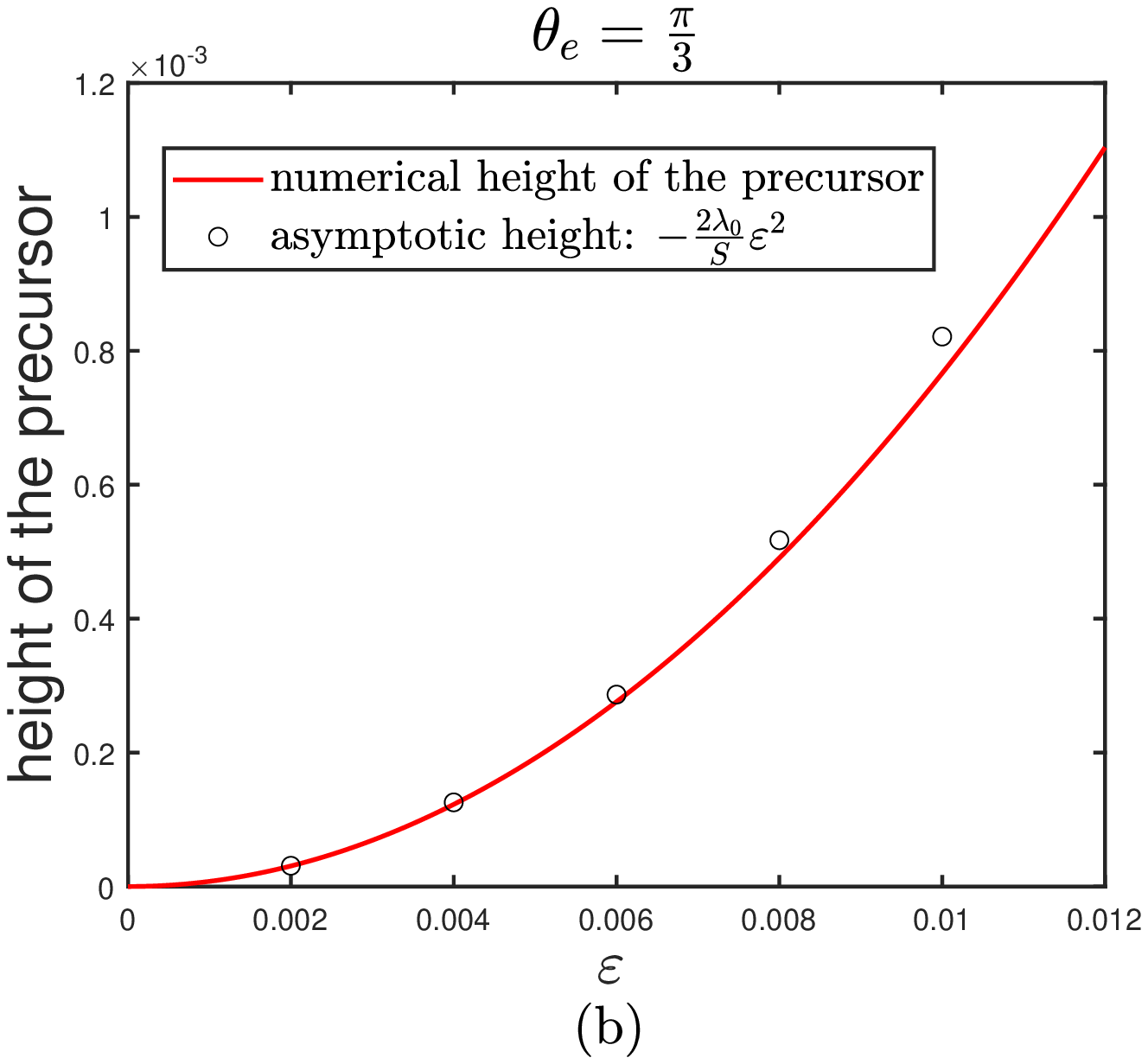}}
    \caption{\label{heightlog}~The height of the precursor of the regularized model as a function of the parameter $\varepsilon$ when Young's angle $\theta_e={\pi}/{3}$, where (a) the log-log plot indicates that it goes to zero at the second-order, i.e., $\mathcal{O}(\varepsilon^2)$; and (b) shows that it agrees perfectly with the formula \eqref{eq:height} given by the asymptotic analysis.}
\end{figure}

Fig. \ref{numerical_inner} plots the convergence results from the profile of the numerical solutions near the left contact point to that of the inner solution given by
Eq.~\eqref{eq:inner} (shown in red line) from our asymptotic analysis under Young's angle $\theta_e={\pi}/{3}$, when we decrease $\varepsilon$.
As shown by the figure, the numerical solutions again agree well with our asymptotic analysis. Finally, Tables \ref{tab:contact_angle}-\ref{tab:contact_point} respectively show the convergence results of the apparent contact angles and the contact point locations when we decrease $\varepsilon$, where the apparent contact angle and the position
of the apparent contact point are determined by a curve fitting. From these tables, we clearly observe the monotone convergence results.
\begin{figure}[htb]
	\centering
	\includegraphics[width=10.0cm]{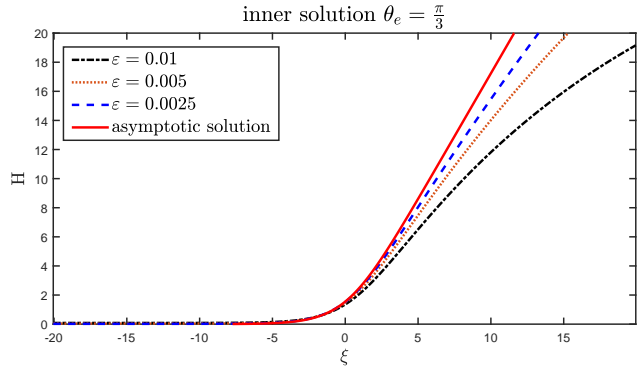}
	\caption{Comparisons between the inner solution of the profile given by Eq.~\eqref{eq:inner} (in red line) from our asymptotic analysis and the numerical solutions for different $\varepsilon=0.01,0.005,0.0025$ under Young's angle $\theta_e={\pi}/{3}$. $\xi$ and $ H $ are rescaled inner variables.}
	\label{numerical_inner}
\end{figure}

\begin{table}[htb]
	\centering
	\fontsize{7}{10}\selectfont
	\caption{Convergence results from the apparent contact angles produced by the regularized model to two different Young's contact angles $\theta_e={\pi}/{3}$ (left) and $\theta_e={\pi}/{6}$ (right), respectively, when we gradually reduce $\varepsilon$.}
	\label{tab:contact_angle}
	\begin{tabular}{|c|c|c|c|c|}
		\hline
		\multirow{2}{*}{$\varepsilon$}&
		\multicolumn{2}{c|}{$\theta_e={\pi}/{3}\approx1.0471$}&\multicolumn{2}{c|}{ $\theta_e={\pi}/{6}\approx0.5235$}\cr\cline{2-5}
		&Apparent contact angle&error&Apparent contact angle&error\cr
		\hline
		\hline
		0.01&1.0349&1.2231e-2&0.5105&1.3016e-2\cr\hline
		0.008&1.0397&7.4144e-3&0.5162&7.3774e-3\cr\hline
		0.006&1.0432&3.9821e-3&0.5198&3.7844e-3\cr\hline
		0.004&1.0455&1.6858e-3&0.5220&1.5679e-3\cr\hline
		0.002&1.0469&2.9315e-4&0.5232&3.7800e-4\cr
		\hline
	\end{tabular}
\end{table}
\begin{table}[htb]
	\centering
	\fontsize{7}{10}\selectfont
	\caption{Convergence results from the locations of apparent contact points by the regularized model to the exact location of contact point by the original model
     when we gradually reduce $\varepsilon$ under two different Young's angles $\theta_e={\pi}/{3}$ (left) and $\theta_e={\pi}/{6}$ (right), respectively.}
	\label{tab:contact_point}
	\begin{tabular}{|c|c|c|c|c|}
		\hline
		\multirow{2}{*}{$\varepsilon$}&
		\multicolumn{2}{c|}{$\theta_e={\pi}/{3}$ (Exact location$\approx 0.45210$) }&\multicolumn{2}{c|}{ $\theta_e={\pi}/{6}$ (Exact location$\approx 0.67966$)}\cr\cline{2-5}
		&Apparent contact point&error&Apparent contact point&error\cr
		\hline
		\hline
		0.01&0.45568&3.5848e-3&0.68891& 9.2526e-3\cr\hline
		0.008&0.45427&2.1688e-3&0.68486&5.2044e-3\cr\hline
		0.006&0.45326&1.1643e-3&0.68232&2.6568e-3\cr\hline
		0.004&0.45259&4.9416e-4&0.68076&1.0975e-3\cr\hline
		0.002&0.45219&8.8512e-5&0.67993&2.6869e-4\cr
		\hline
	\end{tabular}
\end{table}

\section{Conclusion and discussion}

In this paper, we proposed a new regularized variational model for simulating wetting/dewetting problems by introducing a height-dependent surface energy density $\gamma^{\varepsilon}(h)$ defined in~\eqref{eqn:reg}. By our asymptotic analysis and numerical simulations, we showed that the regularized term $\gamma^{\varepsilon}(h)$
will lead to that a bare substrate, resulted from the original model~\eqref{eqn:func}, is always covered by a precursor layer whose thickness depends on the small parameter $\varepsilon$. The new regularized model transforms the original free boundary problem~\eqref{eqn:func} into a fixed domain problem, which brings many advantages in both analysis and simulations.
As a first step, we focus on the equilibrium state of wetting/dewetting problems in this paper by using the new
regularized model, and study the equilibrium problem from the aspects of positivity-preserving property of its solutions, asymptotic limits, and $\Gamma$-convergence to the classical sharp-interface model when $\varepsilon$ goes to zero.

Another important contribution of this paper is devoted to investigating the connections between the new regularized model and the classical sharp-interface model in the limit $\varepsilon\to 0$. Under proper conditions for the interpolation function $g(z)$, we show by matched asymptotic analysis that the regularized model is a singular perturbation problem, and its asymptotic limit yields a spherical cap shape which intersects with the substrate at an apparent contact angle $\theta_e$. This recovers with the well-known
Young's equation. Moreover, the height of the precursor asymptotically approaches zero at the second-order rate, i.e., $\mathcal{O}(\varepsilon^2)$. These theoretical analysis results are validated by numerically solving the regularized model through the modified Newton's method. Last but not least, we rigorously prove that the regularized functional $\Gamma$-converges to the classical sharp-interface functional in $L^1$ space, under some mild and physically meaningful conditions.

Compared with the existing models for simulating wetting/dewetting problems, we note that the new regularized model has the following several advantages that we will further demonstrate in the forthcoming papers: (1) it only needs to be solved in a fixed domain, no matter what kind of problems (i.e., equilibrium or kinetic problems) we focus on; (2) it does not need to explicitly handle the contact line motion, so it can automatically capture topological change events; (3) it is one dimension lower than the phase-field model, so its computational cost is relatively smaller. In addition, we can make use of these advantages and extend our regularization approach to the study of other thin film models, e.g., lubrication model for liquid-state wetting problems.

In the future, we will use the new regularized model to simulate dynamic problems of wetting/dewetting, including $L^2$-gradient flow with the volume constraint
and $H^{-1}$ gradient flow for surface diffusion.\cite{Jiang22Huang} In particular, the regularity, positivity-preserving property and asymptotic limits of its kinetic solutions, the contact line dynamics, and the efficient numerical algorithms will be our future concerns.
It is straightforward and very interesting to include anisotropic surface energy effects into the regularized model for solid-state wetting/dewetting,\cite{Jiang22Huang}
and include elastic effects for simulating epitaxial growth of thin solid films,\cite{Fonseca2007equ,Davoli2019,Boccardo2022} and etc.
In addition, we only study the partial wetting/dewetting regime in this paper, i.e., Young's angle $\theta_e\in (0,{\pi}/{2})$.
If $\theta_e$ is no less than $\pi/2$, we can no longer use the height function $h(x)$ to represent the film/vapor interface curve,
but similar discussions can also be performed by using other parameterizations.

\appendix

\setcounter{definition}{0}
\renewcommand{\thedefinition}{A.\arabic{definition}}
\setcounter{lemma}{0}
\renewcommand{\thelemma}{A.\arabic{lemma}}
\section{Bounded variation function}\label{bv_preliminary}
In this appendix, we briefly introduce some definitions and properties of the bounded variation function. For more details, we refer the reader to Refs. \refcite{ambrosio2000} and \refcite{evans1991}. In this appendix, we always assume $\Omega \subset \mathbb{R}^{n}$.

\begin{definition}{(Bounded variation function)}
	Let $u \in L^{1}(\Omega)$. $u$ is a bounded variation function in $\Omega$ if the distributional derivative of $u$ is represented by a finite Radon measure, i.e.
	$$
	\int_{\Omega} u \frac{\partial \psi}{\partial x_{i}} \mathrm{d}x=-\int_{\Omega} \psi \mathrm{d}D_{i} u \quad \forall \psi \in C_{0}^{\infty}(\Omega), \quad i=1, \ldots, n
	$$
	for some $\mathbb{R}^{n}$-valued Radon measure $D u=\left(D_{1} u, \ldots, D_{n} u\right)$ in $\Omega$. The vector space of all bounded variation function is denoted by $BV(\Omega)$.
\end{definition}
We can always write $D u=\sigma|D u|$, where $|D u|$ is a positive Radon measure and $\sigma=\left(\sigma_{1}, \ldots, \sigma_{n}\right)$ with $|\sigma(x)|=1$ for $|D u|$-a.e. $x \in \Omega$.

Moreover, we say that $ u\in BV_{\mathrm{loc}}(\Omega) $  if $  u \in BV(\Omega_0) $ for every $ \Omega_0\subset\subset\Omega $, i.e. every open $\Omega_0$ with $ \overline{\Omega_0} $ compact and contained in $\Omega$.


\begin{definition}{(Perimeter)}
	Let $ U $ be a Borel set and $\Omega$ be an open set in $ \mathbb{R}^n $. Define the perimeter of $ U $ in $\Omega$ as
	\[
		P(U,\Omega):=\int_\Omega|D\chi_{_{U}}|,
	\]
	where $\chi_{_{U}}$ is the characteristic function of $ U $.
\end{definition}

\begin{definition}{(Approximate jump points)}\label{def_jump}
	Let $u \in L_{\mathrm{loc}}^{1}(\Omega)$ and $x \in \Omega$. We say that $x$ is an approximate jump point of $u$ if there exist $a, b \in \mathbb{R}$ and an $ (n-1) $-dimensional unit vector $\boldsymbol{\nu}$ such that $a \neq b$ and
	$$
	\lim _{R \downarrow 0} \int_{B_{R}^{+}(x,\boldsymbol{\nu} )}R^{-n}|u(y)-a| \mathrm{d}y=0, \quad \lim _{R \downarrow 0} \int_{B_{R}^{-}(x, \boldsymbol{\nu})}R^{-n}|u(y)-b| \mathrm{d}y=0 ,
	$$
	where $ u^+:=a $ and $ u^-:=b $ are called one-side approximate limits and
	\[
		\begin{array}{l}
			B_{R}^{+}(x, \boldsymbol{\nu}):=\left\{y \in B_{R}(x):\langle y-x, \boldsymbol{\nu}\rangle>0\right\}, \\
			B_{R}^{-}(x, \boldsymbol{\nu}):=\left\{y \in B_{R}(x):\{y-x, \boldsymbol{\nu}\rangle<0\right\},
		\end{array}
	\]
	which means two half balls contained in $ B_{R}(x) $ determined by $ \boldsymbol{\nu} $. The set of approximate jump points is denoted by $J_u$.
\end{definition}

We recall the usual decomposition
$$
D u=\nabla u \mathcal{L}^{n}+\left(u^{+}-u^{-}\right) \otimes \boldsymbol{\nu}_{u} \mathcal{H}^{n-1}\mathrm{~\llcorner} J_u+D^{c} u,
$$
where $ Du $ is the distribution derivative of $u$, $\nabla u$ is the Radon-Nikodym derivative of $D u$ with respect to the Lebesgue measure $\mathcal{L}^n$, $ \boldsymbol{\nu}_{u} $ is unit normal vector, $ \mathcal{H}^{n-1}\mathrm{~\llcorner} J_u $ means Hausdorff measure restricted to the set $ J_u $, and $D^{c} u$ is the Cantor part of $D u$. For the sake of simplicity, we denote $ D^su:= \left(u^{+}-u^{-}\right) \otimes \boldsymbol{\nu}_{u} \mathcal{H}^{n-1}\mathrm{~\llcorner} J_u+D^{c} u$, which called the singular part.

%




We introduce two important properties by two lemmas whose proof can be seen in Ref. \refcite{ambrosio2000}.
\begin{lemma}{(Property of $D^{a} u, D^{s} u$)}\label{prop_singular}
	Let $u \in B V(\Omega)$, then $D^{a} u=D u\llcorner(\Omega \backslash S)$ and $D^{s} u=D u\llcorner S$, where
	$$
	S:=\left\{x \in \Omega: \lim _{R \downarrow 0} R^{-n}|D u|\left(B_{R}(x)\right)=\infty\right\}.
	$$
\end{lemma}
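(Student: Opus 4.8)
The plan is to deduce the lemma from the Besicovitch differentiation theorem applied to the pair $\big(|Du|,\mathcal{L}^n\big)$, together with the elementary facts that $|D^{a}u|\ll\mathcal{L}^n$ while $|D^{s}u|\perp\mathcal{L}^n$. First I would record the total--variation decomposition $|Du|=|D^{a}u|+|D^{s}u|$, in which $|D^{a}u|=|\nabla u|\,\mathcal{L}^n$ and $|D^{s}u|\perp\mathcal{L}^n$; since $\mathcal{L}^n(B_R(x))=\omega_n R^n$, the quantities $R^{-n}|Du|(B_R(x))$ and $|Du|(B_R(x))/\mathcal{L}^n(B_R(x))$ differ only by the fixed constant $\omega_n$, so $S$ is precisely the set of points at which the latter tends to $+\infty$.

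Next I would treat the two inclusions separately. By the Besicovitch theorem, for $\mathcal{L}^n$-a.e.\ $x\in\Omega$,
\[
\lim_{R\downarrow 0}\frac{|Du|(B_R(x))}{\mathcal{L}^n(B_R(x))}=|\nabla u|(x)<\infty ,
\]
hence $\mathcal{L}^n(S)=0$, and therefore $|D^{a}u|(S)=0$ because $|D^{a}u|\ll\mathcal{L}^n$. For the opposite direction I would apply the differentiation theorem to the singular measure $|D^{s}u|$: since $|D^{s}u|\perp\mathcal{L}^n$, one has
\[
\lim_{R\downarrow 0}\frac{|D^{s}u|(B_R(x))}{\mathcal{L}^n(B_R(x))}=+\infty\qquad\text{for }|D^{s}u|\text{-a.e.\ }x ,
\]
and, since $|Du|\geqslant|D^{s}u|$ as measures, the same limit holds with $|Du|$ replacing $|D^{s}u|$. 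Thus $|D^{s}u|$-a.e.\ point belongs to $S$, i.e.\ $|D^{s}u|(\Omega\setminus S)=0$.

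Finally I would assemble the two restrictions: from $|D^{a}u|(S)=0$ and $|D^{s}u|(\Omega\setminus S)=0$ we get
\[
Du\llcorner S=D^{a}u\llcorner S+D^{s}u\llcorner S=0+D^{s}u=D^{s}u,\qquad
Du\llcorner(\Omega\setminus S)=D^{a}u+0=D^{a}u,
\]
which is exactly the assertion. The one genuinely non-elementary ingredient, and the step I expect to be the main obstacle, is the claim that a measure singular with respect to $\mathcal{L}^n$ has infinite $n$-density at almost every point of itself; this does not follow by formal manipulation but rests on the Besicovitch covering theorem, and I would either quote it from Ref.~\refcite{ambrosio2000} or reprove it by covering, for each fixed $t>0$, the set $\{x:\liminf_{R\downarrow 0}|D^{s}u|(B_R(x))/\mathcal{L}^n(B_R(x))<t\}$ efficiently by balls on which $|D^{s}u|<t\,\mathcal{L}^n$ and using that $|D^{s}u|$ is carried by an $\mathcal{L}^n$-null set.
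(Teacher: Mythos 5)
Your argument is correct, and it is essentially the standard proof of this fact: the paper itself offers no proof, deferring entirely to Ref.~\refcite{ambrosio2000}, where the result is established exactly as you propose, via the Besicovitch derivation theorem applied to the Lebesgue decomposition $|Du|=|\nabla u|\,\mathcal{L}^n+|D^{s}u|$ (note that mutual singularity of $D^{a}u$ and $D^{s}u$ is what justifies $|Du|=|D^{a}u|+|D^{s}u|$, which you use implicitly). You also correctly isolate the only non-trivial ingredient --- that a measure singular with respect to $\mathcal{L}^n$ has infinite $n$-density at $|D^{s}u|$-a.e.\ point --- and your covering sketch for it is the standard one, so nothing further is needed.
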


\begin{lemma}{(Property of $D^{c} u$)}\label{cantor_lemma}
	Let $u \in B V(\Omega)$, and let $B$ be a Borel set with its $ (n-1) $-dimensional Hausdorff measure $\mathcal{H}^{n-1}(B)<+\infty$. Then $\left|D^{c} u\right|(B)=0$.
\end{lemma}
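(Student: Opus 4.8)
The plan is to reduce the statement, using the inner regularity of the finite Radon measure $|D^cu|$, to the case of a \emph{compact} set $B$ with $\mathcal{H}^{n-1}(B)<\infty$ (every compact subset of such a $B$ again has finite $\mathcal{H}^{n-1}$-measure, and the monotonicity of $\mathcal H^{n-1}$ makes the reduction harmless), and then to localize: pick a bounded open set $\Omega_0$ with $B\subset\Omega_0\Subset\Omega$, so that $u\in BV(\Omega_0)$ and $|Du|(\Omega_0)<\infty$. The two workhorses are: (a) the density-comparison estimates between a positive Radon measure $\mu$ (finite on $\Omega_0$) and $\mathcal H^{n-1}$ (see Ref.~\refcite{ambrosio2000}, Theorem 2.56): writing $\Theta^{*,n-1}(\mu,x):=\limsup_{r\downarrow0}\mu(B_r(x))/(\omega_{n-1}r^{n-1})$, if $\Theta^{*,n-1}(\mu,\cdot)\leqslant t$ everywhere on a Borel set $A$ then $\mu(A)\leqslant 2^{n-1}t\,\mathcal H^{n-1}(A)$, while if $\Theta^{*,n-1}(\mu,\cdot)\geqslant t$ everywhere on $A$ then $\mu(A)\geqslant t\,\mathcal H^{n-1}(A)$; and (b) the fact, also in Ref.~\refcite{ambrosio2000}, that $|Du|$ charges no $\mathcal H^{n-1}$-null set, i.e. $|Du|\ll\mathcal H^{n-1}$.

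The argument then has two steps. First, discard the jump set: in the decomposition $Du=\nabla u\,\mathcal L^n+D^ju+D^cu$ into mutually singular parts, $D^ju=(u^+-u^-)\otimes\boldsymbol\nu_u\,\mathcal H^{n-1}\llcorner J_u$ is carried by $J_u$ and $D^cu$ is carried by its complement (this is consistent with Lemma~\ref{prop_singular}, since $J_u\subset S$), so $|D^cu|(B\cap J_u)=0$ and it suffices to estimate $|D^cu|(B\setminus J_u)\leqslant|Du|(B\setminus J_u)$, where the inequality uses $|Du|=|\nabla u|\mathcal L^n+|D^ju|+|D^cu|\geqslant|D^cu|$. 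Second, split $B\setminus J_u$ by the upper $(n-1)$-density of $|Du|$ into the three Borel sets where $\Theta^{*,n-1}(|Du|,\cdot)$ equals $0$, lies in $(0,+\infty)$, or equals $+\infty$. On the set $\{\Theta^{*,n-1}(|Du|,\cdot)=0\}$ (a subset of $B$, hence of finite $\mathcal H^{n-1}$-measure) the upper estimate in (a) with $t=\delta\downarrow0$ forces $|Du|$ to vanish; on $\{\Theta^{*,n-1}(|Du|,\cdot)=+\infty\}$ the lower estimate in (a) with $t\to+\infty$, combined with $|Du|(\Omega_0)<\infty$, forces this set to be $\mathcal H^{n-1}$-null, hence $|Du|$-null by (b); and on $\{0<\Theta^{*,n-1}(|Du|,\cdot)<+\infty\}$ the Federer--Vol'pert theorem (Ref.~\refcite{ambrosio2000}) shows this set is contained, up to an $\mathcal H^{n-1}$-null set, in the jump set $J_u$, so its intersection with $B\setminus J_u$ is $\mathcal H^{n-1}$-null and therefore $|Du|$-null by (b). Summing the three pieces gives $|Du|(B\setminus J_u)=0$, hence $|D^cu|(B)=0$, which is the assertion.

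The main obstacle is the last of these three cases, namely the Federer--Vol'pert identification of the positive-and-finite $(n-1)$-density set of $|Du|$ with the jump set $J_u$ (together with the companion fact $|Du|\ll\mathcal H^{n-1}$): these are the places where the fine structure theory of $BV$ functions genuinely enters, whereas the density-comparison estimates and the removal of the jump part are routine measure theory. Since both structural facts are exactly the content of Ref.~\refcite{ambrosio2000} (Chapter 3), I would invoke them as cited ingredients rather than reprove the $BV$ structure theory here; the only new work is the elementary way of combining them with the density comparison to isolate the Cantor part on a set where its $(n-1)$-density is trivial.
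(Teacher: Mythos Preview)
Your sketch is correct and is precisely the standard argument from Ref.~\refcite{ambrosio2000} (density comparison for Radon measures versus $\mathcal H^{n-1}$, the absolute continuity $|Du|\ll\mathcal H^{n-1}$, and the Federer--Vol'pert identification of the positive-finite $(n-1)$-density set of $|Du|$ with $J_u$ up to an $\mathcal H^{n-1}$-null set). Note, however, that the paper does not give its own proof of this lemma: it is recorded in the appendix as a known structural property of $BV$ functions and is simply attributed to Ref.~\refcite{ambrosio2000}. So there is no alternative argument to compare against; your outline \emph{is} the proof the paper's citation points to.
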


From Ref. \refcite{demengel1984}, we know a special decomposition:
\begin{equation}\label{area_formula_decompositon}
	\int_{\Omega}\sqrt{1+|Du|^2}:=\int_{\Omega}\sqrt{1+|\nabla u|^2}\mathrm{d}x+|D^{c} u|(\Omega)+\int_{J_u\cap\Omega}|u^+-u^-|  \mathrm{~d} \mathcal{H}^{n-1},
\end{equation}
whose geometric meaning is the perimeter of the subgraph $ U:=\{(x,t)\in\Omega\times\mathbb{R}:t<h(x)\} $ of $ h $ in $\Omega$.\cite{giusti1984} Especially, if $ h $ is Lipschitz continuous, it represents the area of the surface $ \{(x,h(x)):x\in\Omega\} $.


At last, we introduce the compactness of the bounded variation functions.\cite{evans1991}
\begin{lemma}{(Compactness of BV function)}\label{compact_lemma}
	Let $\Omega \subset \mathbb{R}^{n}$ be open and bounded, with $\partial \Omega$ Lipschitz. Assume $\left\{f_{j}\right\}_{j=1}^{\infty}$ is a sequence in $B V(\Omega)$ satisfying
	$$
	\sup _{j}\left\|f_{j}\right\|_{B V(\Omega)}<\infty .
	$$
	Then there exists a subsequence $\left\{f_{j_{k}}\right\}_{k=1}^{\infty}$ and a function $f \in B V(\Omega)$ such that
	$$
	f_{j_{k}} \rightarrow f \text { in } L^{1}(\Omega),
	$$
	as $k \rightarrow \infty$.
\end{lemma}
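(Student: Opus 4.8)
The statement is the classical compact embedding $BV(\Omega)\hookrightarrow L^1(\Omega)$ for a bounded Lipschitz domain, and I would prove it along the standard lines. The first step is a reduction to the whole space. Since $\partial\Omega$ is Lipschitz, there is a bounded linear extension operator $E\colon BV(\Omega)\to BV(\mathbb{R}^n)$ and a bounded open set $\Omega'$ with $\overline{\Omega}\subset\Omega'$ such that $\operatorname{supp}(Ef)\subset\overline{\Omega'}$ and $\|Ef\|_{BV(\mathbb{R}^n)}\leqslant C_\Omega\|f\|_{BV(\Omega)}$ for every $f\in BV(\Omega)$. Replacing $f_j$ by $Ef_j$, we may therefore assume that $\{f_j\}\subset BV(\mathbb{R}^n)$, that all $f_j$ are supported in the fixed bounded set $\overline{\Omega'}$, and that $\sup_j\|f_j\|_{BV(\mathbb{R}^n)}\leqslant M'<\infty$; any subsequence converging in $L^1(\mathbb{R}^n)$ then restricts to one converging in $L^1(\Omega)$, and the limit restricted to $\Omega$ will be the desired $f$.

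The engine of the argument is the translation estimate: for every $f\in BV(\mathbb{R}^n)$ and every $h\in\mathbb{R}^n$,
\[
\int_{\mathbb{R}^n}|f(x+h)-f(x)|\,\mathrm{d}x\leqslant|h|\,|Df|(\mathbb{R}^n).
\]
For smooth $f$ this follows by writing $f(x+h)-f(x)=\int_0^1\nabla f(x+th)\cdot h\,\mathrm{d}t$ and applying Fubini together with translation invariance of Lebesgue measure; for general $f\in BV$ it is obtained by mollification, using $f*\rho_\delta\to f$ in $L^1$ and $|D(f*\rho_\delta)|(\mathbb{R}^n)\leqslant|Df|(\mathbb{R}^n)$. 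Granting this, I would conclude relative compactness by the Fréchet--Kolmogorov (Riesz) criterion in $L^1(\mathbb{R}^n)$: the family $\{f_j\}$ is bounded in $L^1$, it is uniformly tight because all $f_j$ vanish outside $\overline{\Omega'}$, and it is equi-$L^1$-continuous because $\sup_j\|f_j(\cdot+h)-f_j\|_{L^1}\leqslant|h|\,M'\to0$ as $h\to0$. Hence some subsequence satisfies $f_{j_k}\to f$ in $L^1$. (If one prefers not to quote Fréchet--Kolmogorov, the same conclusion follows by mollification: for each fixed $\delta$ the functions $\{f_j*\rho_\delta\}_j$ are bounded in $W^{1,\infty}$ on $\overline{\Omega'}$, hence precompact in $C(\overline{\Omega'})$ by Arzelà--Ascoli; a diagonal extraction over $\delta=1/m\to0$ combined with $\|f_j-f_j*\rho_{1/m}\|_{L^1}\leqslant C/m$ shows the diagonal subsequence is Cauchy in $L^1$.)

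It then remains to identify the limit as a $BV$ function. Lower semicontinuity of the total variation under $L^1$-convergence gives
\[
|Df|(\Omega)\leqslant\liminf_{k\to\infty}|Df_{j_k}|(\Omega)\leqslant\sup_j\|f_j\|_{BV(\Omega)}<\infty,
\]
and $f\in L^1(\Omega)$ is immediate, so $f\in BV(\Omega)$ and $f_{j_k}\to f$ in $L^1(\Omega)$, which is the assertion.

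The only genuinely substantive ingredients are (i) the existence of a bounded $BV$-extension operator on a Lipschitz domain, and (ii) strict-sense density of $C^\infty$ functions in $BV$, which underpins both the translation estimate for non-smooth $BV$ functions and the inequality $|D(f*\rho_\delta)|\leqslant|Df|$; both are classical and recorded in the references cited in this appendix (Refs.\ \refcite{ambrosio2000,evans1991}). Everything else is soft functional analysis — the Fréchet--Kolmogorov theorem (or Arzelà--Ascoli plus a diagonal argument) and lower semicontinuity of the total variation. I expect step (i), carefully stated, to be the main technical hurdle, since it is precisely where the Lipschitz-boundary hypothesis is used; without it the embedding can fail.
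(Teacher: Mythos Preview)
Your proof outline is correct and follows the standard route (extension to $\mathbb{R}^n$ via the Lipschitz hypothesis, the translation estimate $\|\tau_h f-f\|_{L^1}\leqslant |h|\,|Df|$, then Fr\'echet--Kolmogorov or Arzel\`a--Ascoli with a diagonal argument, and finally lower semicontinuity of the total variation to place the limit in $BV$). There is nothing to correct.

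For comparison: the paper does not actually prove this lemma. It is recorded in the appendix as a known preliminary result, with a citation to Ref.~\refcite{evans1991} (Evans--Gariepy), and no argument is supplied. So your write-up is not an alternative to the paper's proof but rather a self-contained account of the cited classical theorem. The approach you sketch is essentially the one in the cited references, so in that sense there is no methodological divergence to discuss.
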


\setcounter{lemma}{0}
\renewcommand{\thelemma}{B.\arabic{lemma}}
\section{Proof of Lemma \ref{lemma_upper}}\label{proof4.2}

Firstly, we recall the definition and properties of molliﬁers.\cite{giusti1984}\\
A function $\eta(x)$ is called a mollifier if
\begin{itemize}
\item[(i)] $\eta(x) \in C_{0}^{\infty}\left(\mathbb{R}^{n}\right)$,
\item[(ii)] $\eta$ is zero outside a compact subset of $B_{1}=\left\{x \in \mathbb{R}^{n}:|x|<1\right\}$,
\item[(iii)] $\int \eta(x) d x=1$.
\end{itemize}
If in addition we have
\begin{itemize}
\item[(iv)] $\eta(x) \geqslant 0$,
\item[(v)] $\eta(x)=\mu(|x|)$ for some function $\mu: \mathbb{R}^{+} \rightarrow \mathbb{R}$,
\end{itemize}
then $\eta$ is a positive symmetric mollifier. An example of positive symmetric mollifiers is the function
$$
\eta(x)= \begin{cases}0 & |x| \geqslant 1, \\ C \exp \left(\frac{1}{|x|^{2}-1}\right) & |x|<1,\end{cases}
$$
where $C$ is a normalizing constant such that $\int \eta(x) d x=1$.

Given a positive symmetric mollifier $\eta$ and a function $f \in L_{\text {loc }}^{1}\left(\mathbb{R}^{n}\right)$, define for each $\varepsilon>0$
$$
\eta_{\varepsilon}(x):=\varepsilon^{-n} \eta\left(\frac{x}{\varepsilon}\right),
$$
$$
f_{\varepsilon}(x):=(\eta_{\varepsilon} * f)(x)=\varepsilon^{-n} \int_{\mathbb{R}^{n}} \eta\left(\frac{x-z}{\varepsilon}\right) f(z) \mathrm{d}z=\int_{\mathbb{R}^{n}} \eta(w) f(x+\varepsilon w) \mathrm{d}w.
$$
It is straightforward to have the following two properties of mollifiers:

(a) $ f(x) \geqslant 0\ a.e. $ in $\Omega$ $\  \Rightarrow f_{\varepsilon}(x) \geqslant 0 $ if $\varepsilon$ is small enough,

(b) $\operatorname{Supp} f \subseteq A \Rightarrow \operatorname{Supp} f_{\varepsilon} \subseteq A_{\varepsilon}=\{x: \operatorname{dist}(x, A) \leqslant \varepsilon\}$.

Using the technique of mollifiers, Lemma B.1 of Ref. \refcite{bildhauer03} can be established.
\begin{lemma}
	Let $h \in B V\left(\Omega\right)$. Then there is a sequence $\left\{h_{j}\right\}$ in $C^{\infty}\left(\Omega\right)$ satisfying
	\begin{equation*}
		\begin{aligned}
			&\lim _{j \rightarrow \infty} \int_{\Omega}\left|h_{j}-h\right| \mathrm{d} x=0 \\
			&\lim _{j \rightarrow \infty} \int_{\Omega} \sqrt{1+\left|\nabla h_{j}\right|^{2}} \mathrm{~d} x=\int_{\Omega} \sqrt{1+|D h|^{2}}
		\end{aligned}
	\end{equation*}
	Moreover, the trace of each $h_{j}$ on $\partial \Omega$ coincides with the trace of $h$.
\end{lemma}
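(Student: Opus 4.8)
This is the classical smooth approximation of $BV$ functions in the area ($\sqrt{1+|Du|^2}$) topology, with preservation of the boundary trace; I would follow Ref.~\refcite{bildhauer03} (after Anzellotti and Giaquinta), using the mollifier properties (a)--(b) above and the decomposition \eqref{area_formula_decompositon}. It is enough to produce, for every $\sigma>0$, one $h_\sigma\in C^\infty(\Omega)$ with $\|h_\sigma-h\|_{L^1(\Omega)}\le\sigma$, with $\big|\int_\Omega\sqrt{1+|\nabla h_\sigma|^2}\,\mathrm{d}x-\int_\Omega\sqrt{1+|Dh|^2}\big|\le\sigma$, and with $\operatorname{Tr}h_\sigma=\operatorname{Tr}h$ on $\partial\Omega$ (here $\operatorname{Tr}$ is the inner trace, well defined on $BV(\Omega)$ since $\partial\Omega$ is Lipschitz); taking $\sigma=1/j$ then gives the claimed sequence.

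\smallskip\noindent\emph{Step 1: interior approximation.} Fix $\sigma$, take a Meyers--Serrin type locally finite open cover $\{A_i\}_{i\ge1}$ of $\Omega$ with each $\overline{A_i}$ a compact subset of $\Omega$, and a subordinate partition of unity $\{\psi_i\}$, $\psi_i\ge0$, $\sum_i\psi_i\equiv1$. For each $i$ pick $\varepsilon_i>0$ small enough that (by (b)) $\operatorname{supp}(\eta_{\varepsilon_i}*(\psi_i h))$ stays compactly in $\Omega$ and that $\|\eta_{\varepsilon_i}*(\psi_i h)-\psi_i h\|_{L^1}$, $\|\eta_{\varepsilon_i}*(h\nabla\psi_i)-h\nabla\psi_i\|_{L^1}$ and $\|\eta_{\varepsilon_i}*\psi_i-\psi_i\|_{L^1}$ are each $<\sigma 2^{-i-2}$, and set $\widehat h_\sigma:=\sum_i\eta_{\varepsilon_i}*(\psi_i h)$. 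The sum is locally finite, so $\widehat h_\sigma\in C^\infty(\Omega)$ and $\widehat h_\sigma\to h$ in $L^1(\Omega)$. By the product rule in $BV$, $D(\psi_i h)=\psi_i\,Dh+h\nabla\psi_i\,\mathcal{L}^n$, so $\nabla\widehat h_\sigma=\sum_i\eta_{\varepsilon_i}*(\psi_i\,Dh)+R_\sigma$ where, using $\sum_i\nabla\psi_i=0$, $\|R_\sigma\|_{L^1}<\sigma/2$. Viewing $\mu_h:=(\mathcal{L}^n,Dh)$ as an $\mathbb{R}^{n+1}$-valued measure on $\Omega$, whose total variation equals $\int_\Omega\sqrt{1+|Dh|^2}$ by \eqref{area_formula_decompositon}, the vector measure $(\mathcal{L}^n,\nabla\widehat h_\sigma\,\mathcal{L}^n)$ differs from $\sum_i\eta_{\varepsilon_i}*(\psi_i\mu_h)$ by $O(\sigma)$ in total variation; since mollification does not increase total variation and $\psi_i\ge0$, $\sum_i\psi_i=1$, one gets $\int_\Omega\sqrt{1+|\nabla\widehat h_\sigma|^2}\,\mathrm{d}x\le\int_\Omega\sqrt{1+|Dh|^2}+O(\sigma)$. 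As $u\mapsto\int_\Omega\sqrt{1+|Du|^2}$ (the perimeter of the subgraph) is lower semicontinuous under $L^1$ convergence, the matching lower bound holds in the limit, so $\widehat h_\sigma\to h$ area-strictly, hence strictly, in $BV(\Omega)$.

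\smallskip\noindent\emph{Step 2: fixing the trace.} The trace operator $BV(\Omega)\to L^1(\partial\Omega)$ is continuous under strict convergence, so $\rho_\sigma:=\|\operatorname{Tr}h-\operatorname{Tr}\widehat h_\sigma\|_{L^1(\partial\Omega)}\to0$. Let $E:L^1(\partial\Omega)\to C^\infty(\Omega)\cap W^{1,1}(\Omega)$ be a bounded right inverse of the trace, of Gagliardo type (a Whitney decomposition of $\Omega$ near $\partial\Omega$ with smooth cutoffs, where the Lipschitz structure of $\partial\Omega$ enters), so $\operatorname{Tr}(Eg)=g$ and $\|Eg\|_{W^{1,1}}\le C\|g\|_{L^1(\partial\Omega)}$. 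Set $v_\sigma:=E(\operatorname{Tr}h-\operatorname{Tr}\widehat h_\sigma)$ and $h_\sigma:=\widehat h_\sigma+v_\sigma\in C^\infty(\Omega)$. Then $\operatorname{Tr}h_\sigma=\operatorname{Tr}h$ exactly, $\|v_\sigma\|_{W^{1,1}}\le C\rho_\sigma\to0$, hence $\|h_\sigma-h\|_{L^1}\le\|v_\sigma\|_{L^1}+\|\widehat h_\sigma-h\|_{L^1}\to0$, and since $\big|\sqrt{1+|a|^2}-\sqrt{1+|b|^2}\big|\le|a-b|$,
\[
\Big|\int_\Omega\sqrt{1+|\nabla h_\sigma|^2}\,\mathrm{d}x-\int_\Omega\sqrt{1+|\nabla\widehat h_\sigma|^2}\,\mathrm{d}x\Big|\le\|\nabla v_\sigma\|_{L^1(\Omega)}\le C\rho_\sigma\to0 .
\]
Thus $h_\sigma$ is the required $\sigma$-approximation; relabelling $\sigma=1/j$ completes the proof, with the traces coinciding by construction.

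\smallskip\noindent\emph{Main obstacle.} The crux is the interplay of the two steps: one cannot first subtract a trace extension of $h$ and then mollify the remainder, because the area functional is only subadditive, so such a splitting inflates the energy by a fixed amount $\sim\|\operatorname{Tr}h\|_{L^1(\partial\Omega)}$ that does not vanish; the interior mollification must therefore be performed on $h$ directly, which loses control of the trace. What rescues the argument is that this interior approximation nevertheless has trace converging to $\operatorname{Tr}h$ (continuity of the $BV$-trace under strict convergence), so that only an $L^1(\partial\Omega)$-small — hence $W^{1,1}$-small — smooth correction $v_\sigma$ is needed. The two technical cores are then the vector-measure bookkeeping giving $\int_\Omega\sqrt{1+|\nabla\widehat h_\sigma|^2}\le\int_\Omega\sqrt{1+|Dh|^2}+O(\sigma)$, and the existence of a bounded \emph{smooth} right inverse of the $W^{1,1}$ trace on a Lipschitz domain.
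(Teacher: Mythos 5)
Your proof is correct, and Step 1 (the locally finite cover, partition of unity, and mollification at scales $\varepsilon_i$ chosen so that the commutator terms sum to $O(\sigma)$, giving area-strict convergence) is exactly the Anzellotti--Giaquinta construction that the paper imports from Lemma B.1 of Ref.~\refcite{bildhauer03}. Where you genuinely diverge is the trace condition. The cited proof builds exact trace preservation \emph{into} Step 1: the cover is taken as boundary-adapted annuli and the $\varepsilon_{j,i}$ are chosen to decay so fast near $\partial\Omega$ that $h_j-h$ satisfies $\rho^{-1}\int_{\{\operatorname{dist}(x,\partial\Omega)<\rho\}}|h_j-h|\,\mathrm{d}x\to0$, which forces $\operatorname{Tr}(h_j-h)=0$ with no further correction. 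You instead accept an uncontrolled trace from Step 1, invoke continuity of the $BV$-trace under strict convergence (valid, since area-strict implies strict by Reshetnyak's continuity theorem) to get $\rho_\sigma\to0$, and then repair the trace with a Gagliardo extension $v_\sigma$ that is $W^{1,1}$-small. This is a legitimate alternative and your energy bookkeeping for the correction ($|\sqrt{1+|a|^2}-\sqrt{1+|b|^2}|\le|a-b|$) is right, but it trades one careful choice of parameters for two nontrivial black boxes: the strict-continuity of the trace operator, and the existence of a bounded right inverse $E$ of the $W^{1,1}$-trace with $C^\infty$ interior values. On the latter, be precise that $E$ is the \emph{nonlinear} Gagliardo construction with the bound $\|Eg\|_{W^{1,1}(\Omega)}\le C\|g\|_{L^1(\partial\Omega)}$; a bounded \emph{linear} right inverse $L^1(\partial\Omega)\to W^{1,1}(\Omega)$ does not exist (Peetre), so the phrase ``bounded right inverse'' should not be read as a linear operator. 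With that caveat made explicit, your argument is complete; the paper's route is more self-contained but yours is conceptually cleaner in separating interior approximation from boundary matching.
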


Based on this lemma, the remaining problem is to show $ h_j\geqslant0 $ and $ \lim_{m\to\infty}\mathcal{L}^n(\operatorname{Supp}(h_j))\leqslant\mathcal{L}^n(\operatorname{Supp}(h)). $

From the proof of Lemma B.1 in Ref. \refcite{bildhauer03}, we know the recovery sequence is
\[
h_{j}=\sum_{i=1}^{\infty} \eta_{\varepsilon_{j,i}} \ast\left(\varphi_{i} h\right) ,
\]
where $\{\varepsilon_{j,i}\}$ are sufficiently small. $ \{\varphi_{i}\} $ is a partition of unity, i.e. for a covering $ \{U_i\} $ of $ \Omega $,
\[
	\varphi_{i} \in C_{0}^{\infty}\left(U_{i}\right), \quad 0 \leqslant \varphi_{i} \leqslant 1, \quad \sum_{i=1}^{\infty} \varphi_{i}=1.
\]

Since $\varphi_i\geqslant0$ and $ h\geqslant0 \ a.e.$ in $\Omega$, from property (a) of the mollifiers, we know $ h_j\geqslant0 $.

For any $\delta>0$, we can select a sequence $\{\varepsilon_{j,i}\}$ such that $ \varepsilon_{j,i}<\delta $. From property (b) of the mollifiers and $ h\geqslant0 \ a.e.$ in $\Omega$,
\begin{align*}
	 \lim\limits_{j\to\infty}\mathcal{L}^n(\operatorname{Supp}(h_j))&= \lim\limits_{j\to\infty}\mathcal{L}^n\bigg(\operatorname{Supp}(\sum_{i=1}^{\infty} \eta_{\varepsilon_{j,i}} \ast\left(\varphi_{i} h\right))\bigg)\\
	&= \lim\limits_{j\to\infty}\mathcal{L}^n\bigg(\cup_i\operatorname{Supp}( \eta_{\varepsilon_{j,i}} \ast\left(\varphi_{i} h\right))\bigg)\\
	&\leqslant \lim\limits_{j\to\infty}\mathcal{L}^n\big(\cup_i\{x:\operatorname{dist}(x,\operatorname{Supp}(\varphi_{i}h))<\delta\} \big)\\
	&\leqslant \lim\limits_{j\to\infty}\mathcal{L}^n\big(\{x:\operatorname{dist}(x,\operatorname{Supp}(h))<\delta\}\big)\\
	&=\mathcal{L}^n\big(\{x:\operatorname{dist}(x,\operatorname{Supp}(h))<\delta\}\big),
\end{align*}
Let $\delta\to0$,
\begin{equation*}
	\lim_{j\to\infty}\mathcal{L}^n(\operatorname{Supp}(h_j))\leqslant\mathcal{L}^n(\operatorname{Supp}(h)).
\end{equation*}

\section*{Acknowledgments}
The authors gratefully acknowledge many helpful discussions with Linlin Su (Southern University of Science and Technology) during the preparation of the paper.
The work of Zhen Zhang was partially supported by the NSFC grant (No. 11731006) and (No. 12071207), NSFC Tianyuan-Pazhou grant (No. 12126602), the Guangdong Basic and Applied Basic Research Foundation (2021A1515010359), and the Guangdong Provincial Key Laboratory of Computational Science and Material Design (No. 2019B030301001). The work of Wei Jiang was supported by the National Natural Science Foundation of China Grant (No.~11871384).

\section*{Authors' Contributions}
Z. Zhou and Z. Zhang did the mathematical analysis. W. Jiang and Z. Zhang designed and coordinated the project. All participated on the preparation of the manuscript. All authors gave final approval for publication.

\bibliographystyle{ws-m3as}
\bibliography{Reference}

\begin{thebibliography}{10}
\newcommand{\enquote}[1]{#1}

\bibitem{Alikakos1994}
N.~D. Alikakos, P.~W. Bates and X.~Chen, \enquote{Convergence of the
  {Cahn-Hilliard equation to the Hele-Shaw model}}, {\it Arch. Ration. Mech.
  Anal.} \textbf{128} (1994) 165--205.

\bibitem{ambrosio2000}
L.~Ambrosio, N.~Fusco and D.~Pallara, {\it Functions of bounded variation and
  free discontinuity problems} (Courier Corporation, 2000).

\bibitem{andrei2006acceleration}
N.~Andrei, \enquote{An acceleration of gradient descent algorithm with
  backtracking for unconstrained optimization}, {\it Numer. Algorithms}
  \textbf{42} (2006) 63--73.

\bibitem{Andreotti2020statics}
B.~Andreotti and J.~H. Snoeijer, \enquote{Statics and dynamics of soft
  wetting}, {\it Annu. Rev. Fluid Mech.} \textbf{52} (2020) 285--308.

\bibitem{armelao2006}
L.~Armelao, D.~Barreca, G.~Bottaro, A.~Gasparotto, S.~Gross, C.~Maragno and
  E.~Tondello, \enquote{Recent trends on nanocomposites based on {Cu, Ag and Au
  clusters}: A closer look}, {\it Coord. Chem. Rev.} \textbf{250} (2006)
  1294--1314.

\bibitem{bao2017}
W.~Bao, W.~Jiang, D.~J. Srolovitz and Y.~Wang, \enquote{Stable equilibria of
  anisotropic particles on substrates: a generalized winterbottom
  construction}, {\it SIAM J. Appl. Math.} \textbf{77} (2017) 2093--2118.

\bibitem{bao2017parametric}
W.~Bao, W.~Jiang, Y.~Wang and Q.~Zhao, \enquote{A parametric finite element
  method for solid-state dewetting problems with anisotropic surface energies},
  {\it J. Comput. Phys.} \textbf{330} (2017) 380--400.

\bibitem{bildhauer03}
M.~Bildhauer, {\it Convex variational problems: linear, nearly linear and
  anisotropic growth conditions} (Springer, 2003).

\bibitem{Boccardo2022}
F.~Boccardo, F.~Rovaris, A.~Tripathi, F.~Montalenti and O.~Pierre-Louis,
  \enquote{Stress-induced acceleration and ordering in solid-state dewetting},
  {\it Phys. Rev. Lett.} \textbf{128} (2022) 026101.

\bibitem{Bonn2009wetting}
D.~Bonn, J.~Eggers, J.~Indekeu, J.~Meunier and E.~Rolley, \enquote{Wetting and
  spreading}, {\it Rev. Mod. Phys.} \textbf{81} (2009) 739.

\bibitem{Bradley19}
A.~T. Bradley, F.~Box, I.~J. Hewitt and D.~Vella,
  \enquote{Wettability-independent droplet transport by bendotaxis}, {\it Phys.
  Rev. Lett.} \textbf{122} (2019) 074503.

\bibitem{Bretin2022approximation}
E.~Bretin, S.~Masnou, A.~Sengers and G.~Terii, \enquote{Approximation of
  surface diffusion flow: A second-order variational {Cahn--Hilliard} model
  with degenerate mobilities}, {\it Math. Models Methods Appl. Sci.}
  \textbf{32} (2022) 793--829.

\bibitem{cahn1996}
J.~W. Cahn, C.~M. Elliott and A.~Novick-Cohen, \enquote{The {Cahn--Hilliard}
  equation with a concentration dependent mobility: motion by minus the
  {Laplacian} of the mean curvature}, {\it Eur. J. Appl. Math.} \textbf{7}
  (1996) 287--301.

\bibitem{Cahn94}
J.~W. Cahn and J.~E. Taylor, \enquote{Surface motion by surface diffusion},
  {\it Acta Metall. Mater.} \textbf{42} (1994) 1045--1063.

\bibitem{Chen2011mass}
X.~Chen, D.~Hilhorst and E.~Logak, \enquote{Mass conserving {Allen--Cahn}
  equation and volume preserving mean curvature flow}, {\it Interfaces and Free
  Boundaries} \textbf{12} (2011) 527--549.

\bibitem{Chen2014analysis}
X.~Chen, X.~Wang and X.~Xu, \enquote{Analysis of the {Cahn--Hilliard equation}
  with a relaxation boundary condition modeling the contact angle dynamics},
  {\it Arch. Ration. Mech. Anal.} \textbf{213} (2014) 1--24.

\bibitem{Chiu1994}
C.-H. Chiu and H.~Gao, \enquote{A numerical study of stress controlled surface
  diffusion during epitaxial film growth}, {\it MRS Online Proceedings Library}
  \textbf{356} (1995) 33--44.

\bibitem{dai2014}
S.~Dai and Q.~Du, \enquote{Coarsening mechanism for systems governed by the
  {Cahn--Hilliard} equation with degenerate diffusion mobility}, {\it
  Multiscale Modeling $\&$ Simulation} \textbf{12} (2014) 1870--1889.

\bibitem{danielson2006}
D.~Danielson, D.~Sparacin, J.~Michel and L.~Kimerling,
  \enquote{Surface-energy-driven dewetting theory of silicon-on-insulator
  agglomeration}, {\it J. Appl. Phys.} \textbf{100} (2006) 530.

\bibitem{Davoli2019}
E.~Davoli and P.~Piovano, \enquote{Analytical validation of the
  {Young--Dupr{\'e}} law for epitaxially-strained thin films}, {\it Math.
  Models Methods Appl. Sci.} \textbf{29} (2019) 2183--2223.

\bibitem{deGennes85}
P.-G. De~Gennes, \enquote{Wetting: statics and dynamics}, {\it Rev. Mod. Phys.}
  \textbf{57} (1985) 827--863.

\bibitem{demengel1984}
F.~Demengel and R.~Temam, \enquote{Convex functions of a measure and
  applications}, {\it Indiana U. Math. J.} \textbf{33} (1984) 673--709.

\bibitem{du2010}
P.~Du, M.~Khenner and H.~Wong, \enquote{A tangent-plane marker-particle method
  for the computation of three-dimensional solid surfaces evolving by surface
  diffusion on a substrate}, {\it J. Comput. Phys.} \textbf{229} (2010)
  813--827.

\bibitem{dziwnik2017}
M.~Dziwnik, A.~Munch and B.~Wagner, \enquote{An anisotropic phase-field model
  for solid-state dewetting and its sharp-interface limit}, {\it Nonlinearity}
  \textbf{30} (2017) 1465--1496.

\bibitem{evans1991}
L.~C. Evans and R.~F. Garzepy, {\it Measure theory and fine properties of
  functions} (Routledge, 2018).

\bibitem{Fonseca2007equ}
I.~Fonseca, N.~Fusco, G.~Leoni and M.~Morini, \enquote{Equilibrium
  configurations of epitaxially strained crystalline films: existence and
  regularity results}, {\it Arch. Rational Mech. Anal.} \textbf{186} (2007)
  477--537.

\bibitem{fonseca2001}
I.~Fonseca and G.~Leoni, \enquote{On lower semicontinuity and relaxation}, {\it
  Proc. R. Soc. Edinburgh} \textbf{131} (2001) 519--565.

\bibitem{fonseca1991A}
I.~Fonseca and S.~M{\"u}ller, \enquote{A uniqueness proof for the wulff
  theorem}, {\it Proc. R. Soc. Edinburgh} \textbf{119} (1991) 125--136.

\bibitem{Gibbs1878}
J.~W. Gibbs, \enquote{On the equilibrium of heterogeneous substances}, {\it
  Trans. Connecticut Acad. Arts Sci.} \textbf{3} (1878) 104--248.

\bibitem{giusti1984}
E.~Giusti and G.~H. Williams, {\it Minimal surfaces and functions of bounded
  variation}, volume~80 (Springer, 1984).

\bibitem{Huang2019}
Q.~Huang, W.~Jiang and J.~Z. Yang, \enquote{An efficient and unconditionally
  energy stable scheme for simulating solid-state dewetting of thin films with
  isotropic surface energy}, {\it Commun. Comput. Phys.} \textbf{26} (2019)
  1444--1470.

\bibitem{Jiang22Huang}
W.~Huang and W.~Jiang, \enquote{A new regularized sharp-interface model for
  simulating solid-state dewetting problems}, {\it preprint} .

\bibitem{jiang2012}
W.~Jiang, W.~Bao, C.~Thompson and D.~Srolovitz, \enquote{Phase field approach
  for simulating solid-state dewetting problems}, {\it Acta Mater.} \textbf{60}
  (2012) 5578--5592.

\bibitem{Jiang16}
W.~Jiang, Y.~Wang, Q.~Zhao, D.~J. Srolovitz and W.~Bao, \enquote{Solid-state
  dewetting and island morphologies in strongly anisotropic materials}, {\it
  Scripta Mater.} \textbf{115} (2016) 123--127.

\bibitem{jiang2020}
W.~Jiang, Q.~Zhao and W.~Bao, \enquote{Sharp-interface model for simulating
  solid-state dewetting in three dimensions}, {\it SIAM J. Appl. Math.}
  \textbf{80} (2020) 1654--1677.

\bibitem{lee2016}
A.~A. Lee, A.~M{\"u}nch and E.~S{\"u}li, \enquote{Sharp-interface limits of
  {the Cahn--Hilliard} equation with degenerate mobility}, {\it SIAM J. Appl.
  Math.} \textbf{76} (2016) 433--456.

\bibitem{Leroy16}
F.~Leroy, F.~Cheynis, Y.~Almadori, S.~Curiotto, M.~Trautmann, J.~Barb{\'e},
  P.~M{\"u}ller et~al., \enquote{How to control solid state dewetting: A short
  review}, {\it Surface Science Reports} \textbf{71} (2016) 391--409.

\bibitem{marchand2012contact}
A.~Marchand, S.~Das, J.~H. Snoeijer and B.~Andreotti, \enquote{Contact angles
  on a soft solid: From young's law to neumann's law}, {\it Phys. Rev. Lett.}
  \textbf{109} (2012) 236101.

\bibitem{Naffouti17}
M.~Naffouti, R.~Backofen, M.~Salvalaglio, T.~Bottein, M.~Lodari, A.~Voigt,
  T.~David, A.~Benkouider, I.~Fraj, L.~Favre et~al., \enquote{Complex dewetting
  scenarios of ultrathin silicon films for large-scale nanoarchitectures}, {\it
  Sci. Adv.} \textbf{3} (2017) eaao1472.

\bibitem{Peschka2019}
D.~Peschka, S.~Haefner, L.~Marquant, K.~Jacobs, A.~M{\"u}nch and B.~Wagner,
  \enquote{Signatures of slip in dewetting polymer films}, {\it PNAS}
  \textbf{116} (2019) 9275--9284.

\bibitem{Piovano2022}
P.~Piovano and I.~Vel{\v{c}}i{\'c}, \enquote{Microscopical justification of
  solid-state wetting and dewetting}, {\it J. Nonlinear Sci.} \textbf{32}
  (2022) 1--55.

\bibitem{QianWangSheng2006}
T.~Qian, X.~Wang and P.~Sheng, \enquote{A variational approach to moving
  contact line hydrodynamics}, {\it J. Fluid Mech.} \textbf{564} (2006)
  333--360.

\bibitem{randolph2007}
S.~Randolph, J.~Fowlkes, A.~Melechko, K.~Klein, H.~Meyer, M.~Simpson and
  P.~Rack, \enquote{Controlling thin film structure for the dewetting of
  catalyst nanoparticle arrays for subsequent carbon nanofiber growth}, {\it
  Nanotechnology} \textbf{18} (2007) 465304.

\bibitem{volker2009}
V.~Schmidt, J.~Wittemann, S.~Senz and U.~Gosele, \enquote{Silicon nanowires: A
  review on aspects of their growth and their electrical properties}, {\it Adv.
  Mater.} \textbf{21} (2009) 2681--2702.

\bibitem{style2013universal}
R.~W. Style, R.~Boltyanskiy, Y.~Che, J.~S. Wettlaufer, L.~A. Wilen and E.~R.
  Dufresne, \enquote{Universal deformation of soft substrates near a contact
  line and the direct measurement of solid surface stresses}, {\it Phys. Rev.
  Lett.} \textbf{110} (2013) 066103.

\bibitem{taylor1974}
J.~Taylor, \enquote{Existence and structure of solutions to a class of
  nonelliptic variational problems}, in {\it Symposia Mathematica} (1974),
  volume~14, pp. 499--508.

\bibitem{thompson2012}
C.~Thompson, \enquote{Solid-state dewetting of thin films}, {\it Annu. Rev.
  Mater. Res.} \textbf{42} (2012) 399--434.

\bibitem{Tripathi2018}
A.~K. Tripathi and O.~Pierre-Louis, \enquote{Triple-line kinetics for solid
  films}, {\it Phys. Rev. E} \textbf{97} (2018) 022801.

\bibitem{Turco2009wetting}
A.~Turco, F.~Alouges and A.~DeSimone, \enquote{Wetting on rough surfaces and
  contact angle hysteresis: numerical experiments based on a phase field
  model}, {\it ESAIM Math. Model. Numer. Anal.} \textbf{43} (2009) 1027--1044.

\bibitem{wang2015sharp}
Y.~Wang, W.~Jiang, W.~Bao and D.~J. Srolovitz, \enquote{Sharp interface model
  for solid-state dewetting problems with weakly anisotropic surface energies},
  {\it Phys. Rev. B} \textbf{91} (2015) 045303.

\bibitem{winterbottom1967}
W.~Winterbottom, \enquote{Equilibrium shape of a small particle in contact with
  a foreign substrate}, {\it Acta Metall.} \textbf{15} (1967) 303--310.

\bibitem{wong2000}
H.~Wong, P.~Voorhees, M.~Miksis and S.~Davis, \enquote{Periodic mass shedding
  of a retracting solid film step}, {\it Acta Mater.} \textbf{48} (2000)
  1719--1728.

\bibitem{Wu2004slope}
Q.~Wu and H.~Wong, \enquote{A slope-dependent disjoining pressure for non-zero
  contact angles}, {\it J. Fluid Mech.} \textbf{506} (2004) 157--185.

\bibitem{wulff1901}
G.~Wulff, \enquote{Zur frage der geschwindigkeit des wachstums und
  deraufl{\"o}sung der krystallfl{\"a}chen}, {\it Z. Kristallogr} \textbf{34}
  (1901) 449¨C530.

\bibitem{Xu2011analysis}
X.~Xu and X.~Wang, \enquote{Analysis of wetting and contact angle hysteresis on
  chemically patterned surfaces}, {\it SIAM J. Appl. Math.} \textbf{71} (2011)
  1753--1779.

\bibitem{Young1805}
T.~Young, \enquote{An essay on the cohesion of fluids}, {\it Philos. Trans. R.
  Soc. London} \textbf{95} (1805) 65--87.

\bibitem{zhang22variational}
Z.~Zhang and T.~Qian, \enquote{Variational approach to droplet transport via
  bendotaxis: Thin film dynamics and model reduction}, {\it Phys. Rev. Fluids}
  \textbf{7} (2022) 044002.

\bibitem{zhang20static}
Z.~Zhang, J.~Yao and W.~Ren, \enquote{Static interface profiles for contact
  lines on an elastic membrane with the willmore energy}, {\it Phys. Rev. E}
  \textbf{102} (2020) 062803.

\bibitem{zhao2020parametric}
Q.~Zhao, W.~Jiang and W.~Bao, \enquote{A parametric finite element method for
  solid-state dewetting problems in three dimensions}, {\it SIAM J. Sci.
  Comput.} \textbf{42} (2020) B327--B352.

\end{thebibliography}
\end{document}